\title{Restarted Halpern PDHG for Linear Programming}
\author{Haihao Lu\thanks{MIT, Sloan School of Management (haihao@mit.edu).} \and Jinwen Yang\thanks{University of Chicago, Department of Statistics (jinweny@uchicago.edu).}}
\date{September 2024}
\begin{document}

\maketitle

\begin{abstract}
    In this paper, we propose and analyze a new matrix-free primal-dual algorithm, called restarted Halpern primal-dual hybrid gradient (rHPDHG), for solving linear programming (LP). We show that rHPDHG can achieve optimal accelerated linear convergence on feasible and bounded LP. Furthermore, we present a refined analysis that demonstrates an accelerated two-stage convergence of rHPDHG over the vanilla PDHG with an improved complexity for identification and an accelerated eventual linear convergence that does not depend on the conservative global Hoffman constant. Regarding infeasible LP, we show that rHPDHG can recover infeasibility certificates with an accelerated linear rate, improving the previous convergence rates. {Furthermore, we discuss an extension of rHPDHG by adding reflection operation (which is dubbed as $\mathrm{r^2HPDHG}$), and demonstrate that it shares all theoretical guarantees of rHPDHG with an additional factor of 2 speedup in the complexity bound.} Lastly, we build up a GPU-based LP solver using rHPDHG/$\mathrm{r^2HPDHG}$, and the experiments on 383 MIPLIB instances showcase an improved numerical performance compared cuPDLP.jl.
\end{abstract}

\section{Introduction}
Linear programming (LP) is a fundamental class of optimization problems with many applications in operations research and computer science~\cite{dantzig2002linear,luenberger1984linear,boyd2004convex}. It has been extensively studied for the past 70 years. The state-of-the-art LP solvers essentially utilize either simplex method or interior-point method (IPM). They are reliable and robust enough to provide high-quality solutions to LP. The success of both algorithms relies heavily on the efficient factorization methods to solve the arising linear systems. Nevertheless, there are two major drawbacks preventing further scaling up these factorization-based algorithms: (i) Due to its sequential nature, factorization is widely believed to be challenging to leverage the massive parallelization offered by modern computing architectures such as graphic processing units (GPUs) and other distributed machines. (ii) It can be highly memory-demanding to store the factorization, which is often denser than the original constraint matrix itself. Thus, it may require remarkably more memory to store the factorization than to store the original instance.

Recent large-scale applications of LP, whose scale can go far beyond the capability of the classic simplex method and interior-point method, have sparked interest in developing new algorithms. A key feature of these algorithms is that they are matrix-free, i.e., there is no need to solve linear systems, and thus the computation bottleneck is matrix-vector multiplication. This guarantees a low per-iteration computational cost and can better take advantage of massively parallel computation. Along this line of research, several matrix-free LP solvers with first-order methods (FOMs) have been developed, for example, \href{https://github.com/google/or-tools/tree/stable/ortools/pdlp}{PDLP}/cuPDLP (\href{https://github.com/jinwen-yang/cuPDLP.jl}{cuPDLP.jl}/\href{https://github.com/COPT-Public/cuPDLP-C}{cuPDLP-C})~\cite{applegate2021practical,lu2023cupdlp,lu2023cupdlpc} based on primal-dual hybrid gradient (PDHG), matrix-free IPM solver \href{https://github.com/leavesgrp/ABIP}{ABIP}~\cite{lin2021admm, deng2022new}, dual-based solver ECLIPSE~\cite{basu2020eclipse}, general conic solver \href{https://github.com/cvxgrp/scs}{SCS}~\cite{o2016conic,o2021operator} based on alternating direction method of multiplier (ADMM), etc. Particularly, cuPDLP (implemented on GPU) was shown to have comparable performance to the state-of-the-art commercial LP solvers, such as Gurobi and COPT, on standard benchmark sets~\cite{lu2023cupdlp,lu2023cupdlpc}, which demonstrates the efficiency of this types of algorithms.

Besides the promising empirical performance, the algorithm behind cuPDLP, restarted average primal-dual hybrid gradient (raPDHG), was shown in theory to achieve the optimal linear convergence rate for solving feasible and bounded LP among a large class of first-order methods with complexity $O\pran{\frac{\gamma}{\alpha}\log\pran{\frac{1}{\varepsilon}}}$~\cite{applegate2023faster}, where $\varepsilon$ is the desired accuracy of the solution, $\gamma$ is the smoothness constant and $\alpha$ is a sharpness constant of the LP. In contrast, the vanilla primal-dual hybrid gradient algorithm was shown to have a sub-optimal linear rate with complexity $O\pran{(\frac{\gamma}{\alpha})^2\log\pran{\frac{1}{\varepsilon}}}$~\cite{lu2022infimal}. This demonstrates the importance of restarts that improve the algorithm's complexity theory, which was also observed to have a significant practical impact~\cite{applegate2021practical}. Following the terminology of Nesterov~\cite{nesterov2013introductory}, we say raPDHG achieves an accelerated linear convergence rate compared to the vanilla PDHG.

However, the sharpness constant $\alpha$ in the complexity theory of raPDHG depends on the global Hoffman constant of the KKT system corresponding to the LP~\cite{applegate2023faster}, whereas the Hoffman constant is well-known to be overly conservative and uninformative~\cite{pena2021new}. Thus, the rate derived in~\cite{applegate2023faster} can be too loose to interpret the successful empirical behavior of the algorithm. Motivated, a refined trajectory-based analysis of vanilla PDHG was developed in \cite{lu2023geometry}. Compared with the previous work~\cite{lu2022infimal,applegate2023faster}, the complexity results derived in \cite{lu2023geometry} for vanilla PDHG do not rely on any global Hoffman constant. Moreover, it characterizes a two-stage convergence behavior and the geometry of the algorithm: in stage I, PDHG identifies active variables and the length of the first stage is driven by a certain quantity, which measures how close the non-degeneracy part of the LP instance is to degeneracy, while in stage II, the algorithm effectively solves a homogeneous linear inequality system, and the complexity of the second stage is driven by a well-behaved local sharpness constant of a homogeneous system. Nevertheless, the refined result in \cite{lu2023geometry} is only developed for the vanilla PDHG; thus, it has a non-accelerated rate, and it is unclear how to extend this refined analysis to raPDHG. 

In addition, for infeasibility detection, it was shown that vanilla PDHG can recover the infeasibility certificate for LP with an eventual linear convergence rate under a non-degeneracy condition~\cite{applegate2024infeasibility}. However, such a non-degeneracy condition usually does not hold for practical LPs (it is also called an ``irrepresentable'' condition in the related literature~\cite{fadili2018sensitivity,fadili2019model}). Even under the non-degeneracy condition, the linear convergence of vanilla PDHG is non-accelerated, and there is no complexity theory of the infeasibility detection for raPDHG.

Indeed, the fundamental reason for the lack of an accelerated two-stage rate and accelerated infeasibility detection rate of raPDHG is as follows: there is no guarantee for the convergence of fixed point residual at the average iterates, which is essential for the refined analysis in \cite{lu2023geometry} and the infeasibility detection in \cite{applegate2024infeasibility}. 
To address the above issues, this paper visits the following two natural questions:
\begin{itemize}
    \item Is there a matrix-free primal-dual algorithm with an accelerated two-stage behavior for solving feasible LP?
    \item Is there a matrix-free primal-dual algorithm with an accelerated linear convergence rate for recovering infeasibility certificates for infeasible LP?
\end{itemize}

We provide affirmative answers to the above questions by proposing a new algorithm, which we dubbed restarted Halpern PDHG (rHPDHG), for solving LP. Particularly, the contributions of the paper can be summarized as follows:
\begin{itemize}
    \item We propose restarted Halpern PDHG (rHPDHG) for solving LP \eqref{eq:lp} (Section \ref{sec:restart-halpern}), and show that it achieves the same accelerated linear convergence rate as \cite{applegate2023faster} on feasible and bounded LP as raPDHG when using the global Hoffman constant (Section \ref{sec:accelerated_rate}).
    \item We present an accelerated two-stage convergence complexity for rHPDHG (Section \ref{sec:two_stage}), i.e., rHPDHG has an improved complexity to identify the active variables and an accelerated eventual linear convergence rate, in contrast to the non-accelerated rate of vanilla PDHG~\cite{lu2023geometry}.
    \item We show that rHPDHG can recover infeasibility certificates with an accelerated linear convergence rate (Section \ref{sec:infeas}), in contrast to the non-accelerated rate of vanilla PDHG~\cite{applegate2024infeasibility}.
    \item We discuss an extension of rHPDHG by adding reflection (dubbed $\mathrm{r^2HPDHG}$) and discuss how reflection can improve the theoretical guarantees of rPHDG by a factor of 2 in theory.
    \item We built up a new LP solver on GPU based on rHPDHG/$\mathrm{r^2HPDHG}$, which we call HPDLP, and the numerical experiments on the MIPLIB benchmark set demonstrate a similar performance as cuPDLP. This showcases the strong numerical performance of rHPDHG/$\mathrm{r^2HPDHG}$ for LP, by noticing that cuPDLP has comparable numerical performance as state-of-the-art LP solvers such as Gurobi and COPT~\cite{lu2023cupdlp,lu2023cupdlpc}. 
\end{itemize}

\subsection{Related literature}

{\bf Linear programming.} Linear programming (LP)~\cite{dantzig2002linear,luenberger1984linear} is a fundamental tool in operations research and computer science, with numerous practical applications~\cite{anderson2000hotel,bowman1956production,boyd2004convex,charnes1954stepping,hanssmann1960linear,liu2008choice,manne1960linear}. The optimization community has been working on accelerating and scaling up LP since the 1940s, leading to extensive research in both academia and industry. The leading methods for solving LP are the simplex methods~\cite{dantzig1998linear} and interior-point methods~\cite{karmarkar1984new}. Leveraging these methods, commercial solvers like Gurobi and COPT, along with open-source solvers such as HiGHS, offer reliable solutions with high accuracy.

{\bf FOM solvers for LP.} Recent research has increasingly focused on using first-order methods for solving large-scale linear programming due to their low iteration costs and parallelization capabilities. Here is an overview of several FOM solvers:
\begin{itemize}
\item \href{https://github.com/google/or-tools/tree/stable/ortools/pdlp}{PDLP}~\cite{applegate2021practical,lu2023cupdlp,lu2023cupdlpc} is a general-purpose large-scale LP solver built upon raPDHG algorithm~\cite{applegate2023faster}, with many practical algorithmic enhancements. The CPU implementation  (open-sourced through \href{https://developers.google.com/optimization}{Google OR-Tools}) is demonstrated to have superior performance than other FOM solver on LP~\cite{applegate2021practical}, while the GPU implementation cuPDLP (\href{https://github.com/jinwen-yang/cuPDLP.jl}{cuPDLP.jl} and \href{https://github.com/COPT-Public/cuPDLP-C}{cuPDLP-C}) has shown comparable performance to commercial LP solver such as Gurobi and COPT~\cite{lu2023cupdlp,lu2023cupdlpc}.

\item\href{https://github.com/leavesgrp/ABIP}{ABIP}~\cite{lin2021admm, deng2022new} is a matrix-free IPM solvers for conic programming. ABIP solves LP as a special case of cone program. It adopts the homogenous self-dual embedding via IPM and utilizes multiple ADMM iterations instead of one Newton step to approximately minimize the log-barrier penalty function.  ABIP+~\cite{deng2022new}, an enhanced version of ABIP, includes many practical enhancements on top of ABIP. 

\item ECLIPSE~\cite{basu2020eclipse} is a distributed LP solver that leverages accelerated gradient descent to solve a smoothed dual form of LP. ECLIPSE is designed specifically to solve large-scale LPs with certain decomposition structures arising from web applications.

\item \href{https://github.com/cvxgrp/scs}{SCS}~\cite{o2016conic,o2021operator} is designed to solve convex cone programs based on ADMM. The computational bottleneck of ADMM-based methods is solving a linear system with similar forms every iteration. It has an indirect option using conjugate gradient methods to solve the system and a GPU implementation based on it. SCS can solve LPs as special cases and support solving linear systems through direct factorization or conjugate gradient methods, with trade-offs in scalability.
\end{itemize}

{\bf Primal-dual hybrid gradient (PDHG).} Originally developed for image processing applications, the Primal-Dual Hybrid Gradient (PDHG) method was introduced in a series of foundational studies~\cite{chambolle2011first, condat2013primal, esser2010general, he2012convergence, zhu2008efficient}. The first convergence proof for PDHG was provided in~\cite{chambolle2011first}, showing a sublinear ergodic convergence rate of $O(1/k)$ for convex-concave primal-dual problems. Later research simplified the analysis of this convergence rate~\cite{chambolle2016ergodic, lu2023unified}. More recent findings have indicated that PDHG's last iterates can achieve linear convergence under certain mild regularity conditions, applicable to various fields including linear programming~\cite{fercoq2022quadratic, lu2022infimal}. The methodology has been extended through various adaptations, including adaptive and stochastic versions of PDHG~\cite{goldstein2015adaptive, malitsky2018first, pock2011diagonal, vladarean2021first, alacaoglu2022convergence, chambolle2018stochastic}. Additionally, research has shown that PDHG can be equated to Douglas-Rachford Splitting with a linear transformation~\cite{liu2021acceleration, o2020equivalence,lu2023unified}.

{\bf Halpern iteration.} The Halpern iteration was first proposed in \cite{halpern1967fixed} for solving fixed point problems and it attracted recent attention with its application in accelerating minimax optimization~\cite{ryu2019ode,yoon2021accelerated,diakonikolas2020halpern,cai2022accelerated,tran2022connection,tran2021halpern,cai2022stochastic}. The asymptotic convergence was firstly established in \cite{xu2002iterative,wittmann1992approximation} and the convergence rate is derived in early works \cite{leustean2007rates,kohlenbach2011quantitative}. Later, an improved rate $O(1/k^2)$ of Halpern iteration is derived in \cite{sabach2017first,lieder2021convergence}. The exact tightness is established in \cite{park2022exact,kim2021accelerated} by constructing a matching complexity lower bound. Halpern iteration is also related to the anchoring techniques developed in \cite{ryu2019ode,yoon2021accelerated}.

{\bf Complexity of FOM on LP.} A significant limitation of first-order methods (FOMs) on LP is their slow tail convergence. The inherent lack of strong convexity in LP leads to merely sublinear convergence rates when classical FOM results are applied~\cite{nesterov2013introductory, beck2017first, ryu2022large}, making it challenging to achieve the high-precision solutions typically expected within reasonable time. However, it turns out that LP possesses additional structural properties that can enhance FOM convergence from sublinear to linear. For instance, a modified version of the ADMM achieves linear convergence for LP~\cite{eckstein1990alternating}. More recently, in~\cite{applegate2023faster}, a sharpness condition is introduced that is satisfied by LP, and a restarted scheme is proposed for various primal-dual methods to solve LP. It turns out that the restarted variants of many FOMs, such as PDHG, extra-gradient method (EGM) and ADMM, can achieve global linear convergence, and such linear convergence is the optimal rate for solving LP, namely, there is a worst-case LP instance such that no FOMs can achieve better than such linear rate. Additionally, it has been shown that PDHG, even without restarting, attains linear convergence, albeit at a slower rate than the restarted methods~\cite{lu2022infimal}. The convergence rates discussed in \cite{applegate2023faster,lu2022infimal} depend on the global Hoffman constant related to the KKT system of LP, which is generally considered overly conservative and not reflective of actual algorithmic performance. Motivated, a refined complexity theory for PDHG in LP has been developed that does not rely on the global Hoffman constant, providing a more accurate depiction of its two-stage (identification + local convergence) behavior~\cite{lu2023geometry}. In~\cite{xiong2023computational,xiong2023relation,xiong2024role}, two purely geometry-based condition measures of LP are introduced to characterize the behavior of raPDHG. The instance-independent complexity of raPDHG for solving totally unimodular LP is derived in \cite{hinder2023worst}.

{\bf Infeasibility detection.} Research on detecting infeasibility in FOMs for convex optimization has predominantly centered on the Alternating Direction Method of Multipliers (ADMM), or equivalently, Douglas-Rachford Splitting. It was demonstrated that the iterates diverge when a solution is non-existent~\cite{eckstein1992douglas}. More recently, it was shown that the infimal displacement vector in ADMM can serve as a certificate of infeasibility for convex quadratic problems with conic constraints, suggesting the use of iterate differences as a diagnostic tool~\cite{banjac2019infeasibility}. Moreover, it turns out that the difference of iterates under any firmly nonexpansive operator converges at a sublinear rate and a multiple-run ADMM approach is introduced to detect infeasibility~\cite{liu2019new}, especially designed for complex scenarios beyond LP. Additionally, recent studies in~\cite{park2023accelerated} has explored the optimal accelerated rate for infeasibility detection for general problems. The behavior of vanilla PDHG on LP was studied in \cite{applegate2024infeasibility}. It turns out that the infimal displacement vector is indeed a certificate of infeasibility, namely, satisfying Farkas lemma. The difference of iterates if PDHG is demonstrated to exhibit eventual linear convergence under non-degeneracy condition. The results of PDHG were extended to quadratic programming and conic programming in \cite{jiang2023range}.

\subsection{Notation}
Denote $\|\cdot\|_2$ the Euclidean norm for vector and spectral norm for matrix. Without loss of generality, we assume the primal and dual step-size in PDHG are equal, i.e., $\tau=\sigma=:\eta\leq \frac{1}{2\|A\|_2}$. To ease the notation, we use the norm $\|\cdot\|=\langle\cdot,P_\eta\cdot\rangle$ to denote the canonical norm of PDHG, where $P_\eta=\begin{pmatrix}
    \frac{1}{\eta}I & -A^\top \\ -A & \frac{1}{\eta}I
\end{pmatrix}$. The distance from a point to a set with respect to norm $\|\cdot\|$ is denoted as $\mathrm{dist}(x,\mathcal U):=\argmin_{u\in\mathcal U}\|x-u\|$. We use big O notation to characterize functions according to their growth rate, particularly, $f(x)= O(g(x))$ means that for sufficiently large $x$, there exists constant $C$ such that $f(x)\leq Cg(x)$ while $f(x)=\widetilde O(g(x))$ suppresses the log dependence, i.e., $f(x)=O(g(x)\log(g(x))$. For a vector $x\in \mathbb{R}^n$ and any set $S\subseteq \{1,2,...,n\}$, denote $x_S=(x_i)_{i\in S}\in \mathbb{R}^{|S|}$ as the subvector with the corresponding coordinates of $x$. We use $z=(x,y)$ to represent the primal-dual solution pair. We use $T(z)=(\tilde x,\tilde y)$ to denote the operator for one PDHG iteration from $z$ (see Section \ref{sec:restart-halpern} for a formal definition). We denote $(T(z)_x)_S=\tilde x_S$ and $T(z)_y=\tilde y$ as the corresponding primal and dual variables after applying the PDHG operator to $z$. Denote $\mathrm{Fix}(T):=\{z\mid z=T(z)\}$ the set of fixed points of the PDHG operator $T$. We call $\|z-T(z)\|$ the fixed point residual of operator $T$.

\section{Restarted Halpern PDHG for LP}\label{sec:restart-halpern}
In this section, we propose our method, restarted Halpern PDHG (rHPDHG, Algorithm \ref{alg:hpdhg-restart}), for solving \eqref{eq:minmax}, and discuss its connection and difference with restarted average PDHG (raPDHG), which is the base algorithm used in the LP solver PDLP.

We consider the standard form LP
\begin{align}\label{eq:lp}
    \begin{split}
        \min_{x\geq 0}\; c^\top x \quad \mathrm{s.t.}\; Ax=b \ ,
    \end{split}
\end{align}
and its primal-dual formulation
\begin{align}\label{eq:minmax}
    \begin{split}
        \min_{x\geq 0}\max_{y}\; c^\top x+y^\top Ax-b^\top y \ .
    \end{split}
\end{align}
Denote $z=(x,y)$ the primal-dual pair. The update rule of PDHG~\cite{chambolle2011first} for solving \eqref{eq:minmax}, which we denote as $z^{k+1}=T(z^k)$,  is given as
\begin{equation}\label{eq:pdhg}
    \begin{cases}
        x^{k+1}\leftarrow \text{proj}_{\mathbb R^n_+}(x^k-\tau A^\top y^k-\tau c) \\ y^{k+1}\leftarrow y^k+\sigma A(2x^{k+1}-x^k)-\sigma b\ .
    \end{cases}
\end{equation}
We call $T$ the operator for a PDHG iteration. Halpern method is a scheme to accelerate general operator splitting methods that have been extensively studied for solving minimax optimization problems~\cite{ryu2019ode,yoon2021accelerated,diakonikolas2020halpern,cai2022accelerated,tran2022connection,tran2021halpern,cai2022stochastic}. Upon vanilla PDHG, Halpern PDHG~\cite{halpern1967fixed} takes a weighted average between the PDHG step of current iterate and the initial point.  Specifically, the update rule of Halpern PDHG is as follows 
\begin{equation}\label{eq:hpdhg}
    z^{k+1}=\text{H-PDHG}(z^k;z^0):=\frac{k+1}{k+2}T(z^k)+\frac{1}{k+2}z^0 \ .
\end{equation}

\begin{algorithm}%[H]
\caption{Restarted Halpern PDHG $rHPDHG$ for \eqref{eq:minmax}}
\label{alg:hpdhg-restart}
\SetKwInOut{Input}{Input}
\Input{Initial point $z^{0,0}$, outer loop counter $n\leftarrow 0$.}

\Repeat{\upshape $z^{n+1,0}$ convergence}{
  initialize the inner loop counter $k\leftarrow0$;\\
  \Repeat{\upshape restart condition holds}{
    $z^{n,k+1}\leftarrow \text{H-PDHG}(z^{n,k};z^{n,0})$;
  }
  initialize the initial solution $z^{n+1,0}\leftarrow T(z^{n,k})$;\\ %$z^{n+1,0}\leftarrow z^{n,k}$;\\
  $n\leftarrow n+1$;
}
\end{algorithm}

Algorithm \ref{alg:hpdhg-restart} outlines our nested-loop restarted Halpern PDHG. It begins with the initialization at $z^{0,0}$. In each outer loop iteration, we continue to execute the Halpern PDHG until a specified restart condition is met (these conditions are described in later sections). During the $k$-th inner loop iteration of the $n$-th outer loop, we invoke Halpern PDHG to obtain the next iterate $z^{n,k+1}$. The next outer loop is initiated at the point of one single PDHG step from the last iterate of the previous outer loop.

rHPDHG uses Halpern PDHG as its base algorithm and restarts at a single PDHG step of the current iterate. In contrast, raPDHG~\cite{applegate2023faster} keeps running the vanilla PDHG in each epoch and restarts at the average iterates of the epoch. 
It turns out that there are connections between the two schemes. Particularly, they share the same trajectory for solving unconstrained bilinear problems as illustrated in Proposition \ref{prop:equivalence}:
\begin{equation}\label{eq:unconstrined}
    \min_x\max_y\ c^\top x-y^\top Ax+b^\top y \ .
\end{equation}
 
\begin{prop}\label{prop:equivalence}
    Consider solving the unconstrained bilinear problem \eqref{eq:unconstrined}. Denote $\{\tilde z^k\}$ and $\{z^k\}$ the iterates of vanilla PDHG \eqref{eq:pdhg} and Halpern PDHG \eqref{eq:hpdhg}, respectively. Suppose $\tilde z^0=z^0$. Denote $\bar z^k:=\frac{1}{k+1}\sum_{i=0}^k \tilde z^i$ the average iterates of vanilla PDHG. Then it holds for any $k\geq 0$ that
    \begin{equation*}
        z^k=\bar z^k \ .
    \end{equation*}
\end{prop}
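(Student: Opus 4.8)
The plan is to exploit the fact that, for the unconstrained bilinear problem \eqref{eq:unconstrined}, the PDHG operator $T$ is \emph{affine}. Indeed, \eqref{eq:unconstrined} carries no nonnegativity constraint, so the projection $\mathrm{proj}_{\mathbb R^n_+}$ appearing in the primal step of \eqref{eq:pdhg} is absent; the primal update then becomes an affine function of $(x^k,y^k)$, and the dual update an affine function of $(x^{k+1},x^k,y^k)$ through the extrapolated term $2x^{k+1}-x^k$. Eliminating $x^{k+1}$ shows that $T$ can be written as $T(z)=Mz+q$ for a fixed matrix $M$ and vector $q$. This is the single structural ingredient on which the whole argument rests.

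The key consequence of affineness is that $T$ commutes with averaging: since the weights of an average sum to one, for any points $w_0,\dots,w_k$ we have
\begin{equation*}
T\left(\frac{1}{k+1}\sum_{i=0}^k w_i\right) = \frac{1}{k+1}\sum_{i=0}^k T(w_i) \ .
\end{equation*}
Applying this to the vanilla PDHG iterates $\tilde z^0,\dots,\tilde z^k$ and using $T(\tilde z^i)=\tilde z^{i+1}$, I would record the identity
\begin{equation*}
T(\bar z^k) = \frac{1}{k+1}\sum_{i=0}^k \tilde z^{i+1} = \frac{1}{k+1}\sum_{i=1}^{k+1}\tilde z^i \ .
\end{equation*}

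I would then prove $z^k=\bar z^k$ by induction on $k$. The base case $k=0$ is immediate from $z^0=\tilde z^0=\bar z^0$. For the inductive step, assume $z^k=\bar z^k$; plugging this into the Halpern recursion \eqref{eq:hpdhg} and then substituting the displayed identity for $T(\bar z^k)$ gives
\begin{equation*}
z^{k+1} = \frac{k+1}{k+2}T(\bar z^k) + \frac{1}{k+2}\tilde z^0 = \frac{1}{k+2}\sum_{i=1}^{k+1}\tilde z^i + \frac{1}{k+2}\tilde z^0 = \frac{1}{k+2}\sum_{i=0}^{k+1}\tilde z^i = \bar z^{k+1} \ ,
\end{equation*}
which closes the induction.

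The argument is essentially computational, so there is no deep obstacle; the one place demanding care is verifying the affine form of $T$, since the dual update in \eqref{eq:pdhg} uses the extrapolated point $2x^{k+1}-x^k$, and one must substitute the primal update before concluding linearity. Once that is in hand, the commuting-with-average identity and the index bookkeeping in the induction are routine. I would also remark that the same reasoning fails for the constrained problem \eqref{eq:minmax}, where the projection $\mathrm{proj}_{\mathbb R^n_+}$ makes $T$ genuinely nonlinear and no longer commuting with averaging — which is precisely why the equivalence is asserted only for the unconstrained bilinear case.
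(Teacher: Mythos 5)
Your proof is correct, and it rests on the same structural ingredient as the paper's — the absence of the projection makes the PDHG operator $T$ affine on the unconstrained bilinear problem — but it executes the induction differently. The paper writes the dynamical system $\tilde z^{k}=Q\tilde z^{k-1}+s$ explicitly, derives a closed-form expression $\bar z^k=\frac{1}{k+1}\bigl(\sum_{i=0}^kQ^i\bigr)z^0+\frac{1}{k+1}\bigl(\sum_{i=0}^k(k-i)Q^i\bigr)s$ for the averaged iterates, and closes the induction by matching coefficients of powers of $Q$. You instead abstract the affineness into the single identity that $T$ commutes with averaging, so that $T(\bar z^k)=\frac{1}{k+1}\sum_{i=1}^{k+1}\tilde z^i$, after which the inductive step is pure index bookkeeping with no reference to $Q$ or $s$ at all. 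Your route is cleaner and slightly more general: it shows the equivalence of Halpern and averaged iterations holds for \emph{any} affine nonexpansive-type fixed-point map, whereas the paper's computation is tied to the specific matrix representation; what the paper's version buys in exchange is the explicit formula \eqref{eq:bilinear-2} for $\bar z^k$, which makes the trajectory concrete. Your closing remark — that the projection $\mathrm{proj}_{\mathbb R^n_+}$ destroys commutation with averaging and hence the equivalence for generic LP — is exactly the right explanation for why the proposition is stated only for the unconstrained case, and matches the paper's discussion around Figure \ref{fig:bilinear}.
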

\begin{proof}
    Denote matrix $Q:=\begin{pmatrix}
            I & -\eta A^\top\\ \eta A & I-\eta^2 AA^\top
        \end{pmatrix}$ and vector $s:=\begin{pmatrix}
            -\eta c \\ -\eta b-2\eta^2 Ac
        \end{pmatrix}$.
    Note that the $k$-th iterate of PDHG $\tilde z^k$ follows the dynamical system
    \begin{equation}\label{eq:bilinear-1}
        \tilde z^{k}=\begin{pmatrix}
            I & -\eta A^\top\\ \eta A & I-\eta^2 AA^\top
        \end{pmatrix}\tilde z^{k-1}+\begin{pmatrix}
            -\eta c \\ -\eta b-2\eta^2 Ac
        \end{pmatrix}=:Q\tilde z^{k-1}+s \ ,
    \end{equation}
    and thus the average iterates can be written as 
    \begin{equation}\label{eq:bilinear-2}
        \bar z^k = \frac{1}{k+1}\sum_{i=0}^k \tilde z^i=\frac{1}{k+1}\pran{\sum_{i=0}^kQ^i}z^0+\frac{1}{k+1}\pran{\sum_{i=0}^k(k-i)Q^i}s \ .
    \end{equation}
The proof follows from induction. Suppose $z^k=\bar z^k$ (which holds for $k=0$). Then we have
\begin{equation*}
\begin{aligned}
    z^{k+1}&=\frac{k+1}{k+2}(Qz^k+s)+\frac{1}{k+2}z^0 = \frac{1}{k+2}\pran{\sum_{i=0}^kQ^{i+1}}z^0+\frac{1}{k+2}\pran{\sum_{i=0}^k(k-i)Q^{i+1}+(k+1)I}s+\frac{1}{k+2}z^0 \\
    & = \frac{1}{k+2}\pran{\sum_{i=0}^{k+1}Q^{i}}z^0+\frac{1}{k+2}\pran{\sum_{i=0}^{k+1}(k+1-i)Q^{i}}s=\bar z^{k+1} \ ,
\end{aligned}
\end{equation*}
where the first equality utilizes \eqref{eq:bilinear-1} and the second one uses \eqref{eq:bilinear-2}. By induction, we prove Halpern PDHG is equivalent to average PDHG on unconstrained bilinear problem.
\end{proof}

This connection may provide some intuitions on the effectiveness of rHPDHG for LP by noticing raPDHG is an optimal algorithm for LP~\cite{applegate2023faster}. Despite such connections, Halpern PDHG exhibits different trajectories of iterates with average PDHG for generic LP~\eqref{eq:lp}, as illustrated in an example in Figure \ref{fig:bilinear}. Actually, the difference between LP~\eqref{eq:lp} and unconstrained bilinear problem~\eqref{eq:unconstrined} is just the projection onto the positive orthant. In other words, the difference between raPDHG and rHPDHG inherently comes from the projection step for the positive orthant constraint $x\ge 0$. 
\begin{figure}[ht!]
\centering
\begin{subfigure}{0.5\textwidth}
  \centering
  \includegraphics[width=\textwidth]{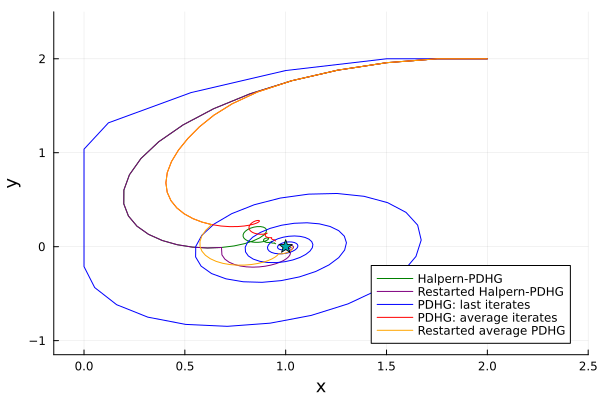}
  \caption{Comparison of trajectories}
\end{subfigure}%
% \hspace{1cm}
\begin{subfigure}{0.5\textwidth}
  \centering
  \includegraphics[width=\textwidth]{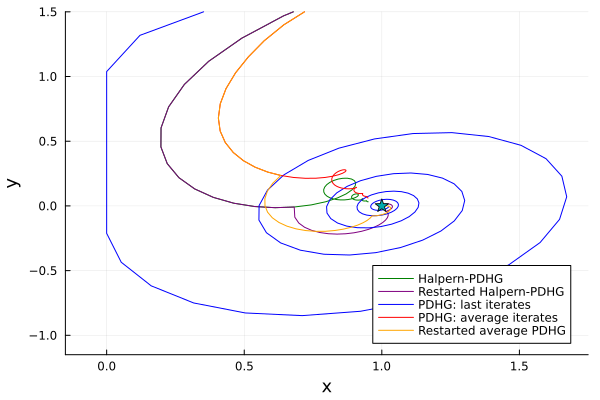}
  \caption{Zoom-in around optimal solution}
\end{subfigure}
\caption{Comparison of trajectories on solving $\min_{x\geq 0}\max_y\ (x-1)y$.}
\label{fig:bilinear}
\end{figure}

\section{Accelerated linear rate of rHPDHG for LP}\label{sec:accelerated_rate}

In this section, we present the accelerated linear convergence rate of rHPDHG for LP. Similar to raPDHG~\cite{applegate2023faster}, we introduce a fixed frequency restarting scheme and an adaptive restarting scheme and show that both restarting schemes can achieve the accelerated linear convergence rate, similar to raPDHG, for feasible and bounded LP. Compared to \cite{applegate2023faster}, there are two major differences: (i) we define sharpness condition using the fixed point residual instead of normalized duality gap in \cite{applegate2023faster}; (ii) we utilize fixed point residual to determine adaptive restarting instead of normalized duality gap. 

We first introduce a sharpness condition of \eqref{eq:lp}. This condition has been extensively used recently for first-order methods to achieve linear rates on solving LP~\cite{applegate2023faster}. Sharpness essentially measures the growth of sub-gradient with respect to distance to optimality. More specifically, the following sharpness property holds for PDHG operator on solving LP \eqref{eq:lp}.
\begin{prop}\label{prop:sharp}
Consider linear programming ~\eqref{eq:lp} and denote $T(z)$ as one PDHG iteration from $z$~\eqref{eq:pdhg}. For any $R>0$, there exists a constant $\alpha_\eta>0$ such that for any $z$ such that $\|T(z)\|_2\leq R$, it holds that
    \begin{equation*}
        \alpha_\eta\mathrm{dist}(z,\mathcal Z^*)\leq \left\|z-\PDHG(z)\right\| \ ,
    \end{equation*}
    where $\mathcal Z^*$ is the set of optimal solutions.
\end{prop}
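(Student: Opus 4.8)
The plan is to establish this sharpness bound as a Hoffman-type error bound, exploiting the fact that the optimal set $\mathcal Z^*$ coincides with $\mathrm{Fix}(T)$ and is a polyhedron cut out by \emph{linear} (in)equalities. First I would record the variational characterization of the optimal set: since the LP is feasible and bounded, strong duality holds, and $(x,y)\in\mathcal Z^*$ if and only if $Ax=b$, $x\geq 0$, $A^\top y+c\geq 0$, and $c^\top x + b^\top y\leq 0$, where the last inequality, combined with weak duality, forces zero duality gap and hence complementarity. The key point is that all four conditions are linear, so $\mathcal Z^*$ is a genuine polyhedron and Hoffman's lemma yields a global constant $H$ with $\mathrm{dist}(w,\mathcal Z^*)\leq H\,\|r(w)\|_2$, where $r(w)$ collects the violations $Aw_x-b$, $(-w_x)_+$, $(-(A^\top w_y+c))_+$, and $(c^\top w_x + b^\top w_y)_+$.

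Rather than bounding the violations of $z$ directly, I would apply Hoffman's bound to the image point $\tilde z = T(z)=(\tilde x,\tilde y)$ and then use $\mathrm{dist}(z,\mathcal Z^*)\leq \|z-\tilde z\| + \mathrm{dist}(\tilde z,\mathcal Z^*)$, noting that $\|z-\tilde z\|$ is exactly the fixed point residual. The advantage of working with $\tilde z$ is that the defining relations of the PDHG step bound each KKT violation of $\tilde z$ by the residual. Concretely, $\tilde x\geq 0$ automatically since it is a projection onto $\mathbb R^n_+$, so $(-\tilde x)_+=0$; the $y$-update gives $A\tilde x - b = \tfrac1\eta(\tilde y - y) - A(\tilde x - x)$, hence $\|A\tilde x - b\|_2\leq C_1\|z-T(z)\|_2$; and the projection optimality conditions give $A^\top y + c\geq \tfrac1\eta(x-\tilde x)$ together with the complementarity identity $\tilde x^\top(\tilde x - x + \eta(A^\top y + c))=0$, from which $(-(A^\top\tilde y + c))_+$ is controlled by $\tfrac1\eta|x-\tilde x|$ and $A^\top(\tilde y - y)$, again bounded by $C_2\|z-T(z)\|_2$.

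The main obstacle, and the only place where the radius $R$ enters, is the duality-gap violation $(c^\top\tilde x + b^\top\tilde y)_+$. Expanding it through the same projection and update identities produces bilinear cross terms of the form $\tilde x^\top(x-\tilde x)$ and $y^\top A(\tilde x - x)$, i.e.\ products of the (bounded) image point with the residual; here I would invoke $\|T(z)\|_2\leq R$ to bound the factors $\tilde x,\tilde y$ and obtain $(c^\top\tilde x + b^\top\tilde y)_+\leq C_3(R)\,\|z-T(z)\|_2$. This is precisely why a uniform constant holds only on the bounded region and why $\alpha_\eta$ must depend on $R$.

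Combining the four violation bounds with Hoffman's lemma gives $\mathrm{dist}(\tilde z,\mathcal Z^*)\leq H\,C(R)\,\|z-T(z)\|_2$, and the triangle inequality then yields $\mathrm{dist}(z,\mathcal Z^*)\leq (1+HC(R))\|z-T(z)\|_2$. Finally, since $\eta\leq \tfrac1{2\|A\|_2}$ makes $P_\eta$ positive definite, the PDHG norm $\|\cdot\|$ is equivalent to $\|\cdot\|_2$, so I can transfer the estimate from the Euclidean norm to $\|\cdot\|$ on both sides and absorb the norm-equivalence constants into the claimed $\alpha_\eta>0$. The delicate bookkeeping is the gap term, and the essential structural observation is to apply the Hoffman constant to the \emph{linear} representation of $\mathcal Z^*$ obtained from strong duality rather than to the nonconvex complementarity form.
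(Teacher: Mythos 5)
Your proposal is correct, but its core machinery is genuinely different from the paper's. Both arguments share the same skeleton --- apply an error bound at the image point $T(z)$ rather than at $z$, use the triangle inequality $\mathrm{dist}(z,\mathcal Z^*)\le \|z-T(z)\|+\mathrm{dist}(T(z),\mathcal Z^*)$, and convert between $\|\cdot\|_2$ and $\|\cdot\|$ via $\eta\le \frac{1}{2\|A\|_2}$ --- but they diverge in how $\mathrm{dist}(T(z),\mathcal Z^*)$ is controlled. The paper imports the metric-subregularity bound $\alpha\,\mathrm{dist}_2(z,\mathcal Z^*)\le \mathrm{dist}_2(0,\mathcal F(z))$ for the KKT subdifferential $\mathcal F$ as a black box from \cite{applegate2023faster,lu2022infimal,fercoq2022quadratic}, and then dispatches the entire residual estimate in one line through the inclusion $P_\eta(z-T(z))\in\mathcal F(T(z))$, which is immediate from the PDHG update. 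You instead re-derive the error bound from scratch: Hoffman's lemma applied to the explicit linear representation of $\mathcal Z^*$ (primal feasibility, dual feasibility, nonpositive gap), followed by component-wise bounds on the four violations at $T(z)$ obtained from the projection optimality conditions and the $y$-update identity. Your route buys self-containedness and makes transparent exactly where the radius $R$ enters (only the gap term); its cost is bookkeeping, and there is one spot in your sketch that needs care: you list $y^\top A(\tilde x - x)$ among the bilinear cross terms, but $y$ is the \emph{input} point and is not bounded by $R$, so such a term is not directly of the form (bounded)$\times$(residual), and splitting $y=\tilde y-(\tilde y - y)$ naively would create a residual-squared term that no constant $C(R)$ controls. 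The expansion must be organized so that every residual factor is paired with a component of $T(z)$: substituting the complementarity identity $\tilde x^\top(\tilde x - x+\eta(A^\top y+c))=0$ and $b = A(2\tilde x - x)-\frac{1}{\eta}(\tilde y - y)$ into the gap, the unbounded contributions cancel and one obtains $c^\top\tilde x+b^\top\tilde y = -\frac{1}{\eta} \tilde x^\top(\tilde x - x)+(\tilde y-y)^\top A\tilde x + \tilde y^\top A(\tilde x - x)-\frac{1}{\eta} \tilde y^\top(\tilde y - y)$, where each term is indeed (bounded)$\times$(residual), giving $(c^\top\tilde x + b^\top\tilde y)_+\le C(R)\|z-T(z)\|_2$ as you claimed. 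With that cancellation made explicit, your argument goes through and in effect re-proves inline the sharpness inequality that the paper cites, at the price of a longer but fully elementary proof.
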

\begin{proof}
    We know from~\cite{applegate2023faster,lu2022infimal,fercoq2022quadratic}  that for any $R>0$, there exists an $\alpha>0$ such that for any $z$ with $\|z\|_2\leq R$, it holds that
    \begin{equation}\label{eq:eucildean-sharp}
        \alpha\mathrm{dist}_2(z,\mathcal Z^*)\leq \mathrm{dist}_2(0,\mathcal F(z)) \ ,
    \end{equation}
    where $\mathcal F(z)=\mathcal F(x,y)=\begin{pmatrix}
        c+A^\top y+\partial \iota_{\mathbb R_+^n}(x) \\ b-Ax
    \end{pmatrix}$ is the sub-differential to \eqref{eq:minmax}. 
   Then denote $\alpha_\eta = \frac{\eta\alpha}{\eta\alpha+2}$ and we have
    \begin{equation*}
        \begin{aligned}
            \mathrm{dist}(z,\mathcal Z^*) &\leq \mathrm{dist}(T(z),\mathcal Z^*) + \|z-T(z)\|\leq \sqrt{\frac 2\eta} \mathrm{dist}_2(T(z),\mathcal Z^*)+ \|z-T(z)\|\\
            &\leq \sqrt{\frac2\eta}\frac{1}{\alpha}\mathrm{dist}_2(0,\mathcal F(T(z)))+ \|z-T(z)\| \leq \frac{2}{\eta\alpha}\mathrm{dist}_{P_\eta^{-1}}(0,\mathcal F(T(z)))+ \|z-T(z)\|\\
            & \leq \frac{2}{\eta\alpha}\|z-T(z)\|+ \|z-T(z)\| = \pran{1+\frac{2}{\eta\alpha}}\|z-T(z)\|= \frac{1}{\alpha_\eta}\|z-T(z)\| \ ,
        \end{aligned}
    \end{equation*}
    where the third inequality uses \eqref{eq:eucildean-sharp} at $T(z)$ and the last inequality utilizes  $P_\eta(z-T(z))\in \mathcal F(T(z))$ by update rule of PDHG \eqref{eq:pdhg}.
\end{proof}

Next, we propose two restart schemes for Algorithm \ref{alg:hpdhg-restart}:
\begin{itemize}
    \item \textbf{Fixed frequency restart for feasible LP}: Suppose we know the sharpness constant $\alpha_\eta$. Under this approach, we interrupt the inner loop and initiate the outer loop anew at a predetermined frequency $k^*$. Specifically, we restart the algorithm when
    \begin{equation}\label{eq:fixed-restart}
        k\geq \frac{2e}{\alpha_\eta}=k^* \ .
    \end{equation}
    \item \textbf{Adaptive restart for feasible LP}: In this approach, we initiate a restart when there is significant decay in the difference between iterates $z^{n,k}$ and $T(z^{n,k})$. This adaptive restart mechanism does not require an estimate of $\alpha_\eta$ which is almost inaccessible in practice~\cite{pena2021new}. Specifically, we restart the algorithm if
    \begin{equation}\label{eq:adaptive-restart}
    \begin{cases}
        \left\|z^{n,k}-\PDHG(z^{n,k})\right\|\leq \frac 1e \left\|z^{n,0}-\PDHG(z^{n,0})\right\|, & n\geq 1\\
        k>\tau^0,& n=0
    \end{cases}
    \end{equation}
\end{itemize}

The main results of this section, Theorem \ref{thm:fixed-restart} and Theorem \ref{thm:adaptive-restart}, present the linear convergence rate of restarted Halpern PDHG with two restart schemes stated above. 
\begin{thm}[Fixed frequency restart for feasible LP]\label{thm:fixed-restart}
For a feasible and bounded LP~\eqref{eq:lp}, consider $\{z^{n,k}\}$ the iterates of restarted Halpern PDHG (Algorithm \ref{alg:hpdhg-restart}) with fixed frequency restart scheme, namely, we restart the outer loop if \eqref{eq:fixed-restart} holds. Denote $\mathcal Z^*$ is the set of optimal solutions. Then it holds for any $n\geq 0$ that
    \begin{equation*}
        \mathrm{dist}(z^{n+1,0},\mathcal Z^*)\leq \pran{\frac 1e}^{n+1} \mathrm{dist}(z^{0,0},\mathcal Z^*) \ .
    \end{equation*}
\end{thm}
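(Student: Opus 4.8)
The plan is to reduce the theorem to a single per-epoch contraction and then close by induction on the outer counter $n$. The two quantitative ingredients I would use are: (i) the $O(1/k)$ decay of the fixed-point residual along a Halpern sequence, and (ii) the sharpness bound of Proposition~\ref{prop:sharp}, combined with the nonexpansiveness of the PDHG operator $T$ in the norm $\|\cdot\|$ (valid since $\eta\le\frac{1}{2\|A\|_2}$ makes $P_\eta\succ 0$) together with the elementary fact that $\mathcal Z^*\subseteq\mathrm{Fix}(T)$, i.e. optimal primal-dual pairs are exactly the fixed points of the map \eqref{eq:pdhg} by KKT optimality. For ingredient (i), I would invoke the standard convergence analysis of the Halpern iteration for a nonexpansive operator (see \cite{lieder2021convergence,sabach2017first}) applied to $T$ within the $n$-th epoch started at $z^{n,0}$: for any $z^*\in\mathrm{Fix}(T)$ it gives $\|z^{n,k}-T(z^{n,k})\|\le \frac{2\|z^{n,0}-z^*\|}{k+1}$, and minimizing over $z^*\in\mathcal Z^*$ yields $\|z^{n,k}-T(z^{n,k})\|\le \frac{2\,\mathrm{dist}(z^{n,0},\mathcal Z^*)}{k+1}$.

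Next I would assemble the contraction chain. Because the epoch restarts at $z^{n+1,0}=T(z^{n,k})$ with $k\ge k^*$, and $T$ is nonexpansive while $\mathcal Z^*$ consists of fixed points, $\mathrm{dist}(z^{n+1,0},\mathcal Z^*)=\mathrm{dist}(T(z^{n,k}),\mathcal Z^*)\le \mathrm{dist}(z^{n,k},\mathcal Z^*)$. Applying Proposition~\ref{prop:sharp} at the point $z^{n,k}$ gives $\mathrm{dist}(z^{n,k},\mathcal Z^*)\le \frac{1}{\alpha_\eta}\|z^{n,k}-T(z^{n,k})\|$, and substituting the Halpern residual rate together with the restart rule $k\ge k^*=\frac{2e}{\alpha_\eta}$ produces $\mathrm{dist}(z^{n+1,0},\mathcal Z^*)\le \frac{2}{\alpha_\eta(k+1)}\,\mathrm{dist}(z^{n,0},\mathcal Z^*)\le \frac1e\,\mathrm{dist}(z^{n,0},\mathcal Z^*)$. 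Iterating this one-step contraction over $n$ telescopes to the claimed factor $\pran{\tfrac1e}^{n+1}$.

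The main obstacle is that Proposition~\ref{prop:sharp} supplies a sharpness constant $\alpha_\eta$ only on a bounded region $\{z:\|T(z)\|_2\le R\}$, so I must certify that one fixed $\alpha_\eta$ (hence one threshold $k^*$) governs every epoch. Here I would avoid any circularity by deriving the required boundedness from nonexpansiveness alone, independently of the contraction. Fix $z^*\in\mathcal Z^*$. A short induction on the Halpern update—using that $z^{n,k+1}$ is a convex combination of $T(z^{n,k})$ and $z^{n,0}$ and that $\|T(z^{n,k})-z^*\|\le\|z^{n,k}-z^*\|$—gives $\|z^{n,k}-z^*\|\le\|z^{n,0}-z^*\|$ for every $k$. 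Since $z^{n+1,0}=T(z^{n,k})$, nonexpansiveness also yields $\|z^{n+1,0}-z^*\|\le\|z^{n,0}-z^*\|$, so the epoch-start distances are monotonically non-increasing and $\|z^{n,k}-z^*\|\le\|z^{0,0}-z^*\|$ for all $n,k$. By equivalence of $\|\cdot\|$ and $\|\cdot\|_2$, this bounds $\sup_{n,k}\|T(z^{n,k})\|_2$ by a constant $R$ depending only on $z^{0,0}$; fixing the associated $\alpha_\eta$ and $k^*=\frac{2e}{\alpha_\eta}$ then makes the entire argument self-consistent and completes the induction.
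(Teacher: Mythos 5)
Your proof is correct and follows essentially the same route as the paper: sharpness (Proposition \ref{prop:sharp}) combined with the Halpern residual bound (Lemma \ref{lem:sublinear-feas}), nonexpansiveness of $T$, and the boundedness-by-nonexpansiveness argument (Lemma \ref{lem:iterates-feas}) to certify one uniform $\alpha_\eta$ across all epochs. The only cosmetic difference is the order of steps: you contract $\mathrm{dist}(T(z^{n,k^*}),\mathcal Z^*)\leq \mathrm{dist}(z^{n,k^*},\mathcal Z^*)$ and then apply sharpness at $z^{n,k^*}$, whereas the paper applies sharpness at $z^{n+1,0}=T(z^{n,k^*})$ and then contracts the fixed-point residual; both give the identical per-epoch factor $\tfrac{2}{\alpha_\eta(k^*+1)}\leq \tfrac1e$.
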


\begin{thm}[Adaptive restart for feasible LP]\label{thm:adaptive-restart}
For a feasible and bounded LP~\eqref{eq:lp}, consider $\{z^{n,k}\}$ the iterates of restarted Halpern PDHG (Algorithm \ref{alg:hpdhg-restart}) with adaptive restart scheme, namely, we restart the outer loop if \eqref{eq:adaptive-restart} holds. Denote $\mathcal Z^*$ is the set of optimal solutions. Then it holds for any $n\geq 0$ that
\begin{enumerate}
    \item[(i)] The restart length $\tau^n$ is upper bounded by $k^*$,
    \begin{equation*}
        \tau^n\leq \frac{2e}{\alpha_n}\leq \left\lceil{\frac{2e}{\alpha_\eta}}\right\rceil=:k^* \ ,
    \end{equation*}
    where $\alpha_n:=\frac{\|z^{n,0}-\PDHG(z^{n,0})\|}{\mathrm{dist}(z^{n,0},\mathcal Z^*)}\geq \alpha_\eta$.
    \item[(ii)] The distance to optimal set decays linearly,
    \begin{equation*}
        \mathrm{dist}(z^{n+1,0},\mathcal Z^*) \leq \pran{\frac 1e}^{n+1}\frac{2e}{\alpha_\eta (\tau^0+1)}\mathrm{dist}(z^{0,0},\mathcal Z^*) \ .
    \end{equation*}
\end{enumerate}
\end{thm}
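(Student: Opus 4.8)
The plan is to track the single scalar potential $r^n := \|z^{n,0}-T(z^{n,0})\|$, the fixed-point residual at the restart points, show it contracts by the factor $1/e$ per outer loop, and then convert this residual decay into distance decay through the sharpness inequality of Proposition~\ref{prop:sharp}. Two standing facts are needed first. (a) The PDHG operator $T$ is nonexpansive in $\|\cdot\|$ and, for feasible bounded LP, $\mathrm{Fix}(T)=\mathcal Z^*$; this yields the two monotonicities $\mathrm{dist}(T(z),\mathcal Z^*)\le \mathrm{dist}(z,\mathcal Z^*)$ and $\|T(z)-T(T(z))\|\le\|z-T(z)\|$ (both by applying nonexpansiveness against a fixed point). (b) The Halpern iteration admits the $O(1/k)$ residual estimate $\|z^{n,k}-T(z^{n,k})\|\le \frac{2\,\mathrm{dist}(z^{n,0},\mathcal Z^*)}{k+1}$ for the anchoring weights $\frac{1}{k+2}$ of \eqref{eq:hpdhg}. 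Fact (a) also gives, by induction along the Halpern recursion with anchor $z^{n,0}$, the bound $\|z^{n,k}-z^*\|\le\|z^{n,0}-z^*\|$ for any $z^*\in\mathcal Z^*$, so the inner iterates never leave a fixed ball around $\mathcal Z^*$; this is what lets Proposition~\ref{prop:sharp} be invoked with one uniform constant $\alpha_\eta$.

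For part (i), I would apply Proposition~\ref{prop:sharp} at $z^{n,0}$ to obtain $\alpha_n=\|z^{n,0}-T(z^{n,0})\|/\mathrm{dist}(z^{n,0},\mathcal Z^*)\ge\alpha_\eta$, hence $\mathrm{dist}(z^{n,0},\mathcal Z^*)=\alpha_n^{-1}\|z^{n,0}-T(z^{n,0})\|$. Substituting into the Halpern bound (b) gives $\|z^{n,k}-T(z^{n,k})\|\le \frac{2}{\alpha_n(k+1)}\|z^{n,0}-T(z^{n,0})\|$, so the adaptive condition \eqref{eq:adaptive-restart} is certainly met once $\frac{2}{\alpha_n(k+1)}\le\frac1e$, i.e. once $k+1\ge\frac{2e}{\alpha_n}$. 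The first $k$ triggering the restart therefore obeys $\tau^n\le\frac{2e}{\alpha_n}\le\frac{2e}{\alpha_\eta}=k^*$, which is exactly the claim.

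For part (ii), I would establish the one-step contraction of $r^n$. Since $z^{n+1,0}=T(z^{n,\tau^n})$, fact (a) gives $r^{n+1}=\|T(z^{n,\tau^n})-T(T(z^{n,\tau^n}))\|\le\|z^{n,\tau^n}-T(z^{n,\tau^n})\|$, and the restart rule for $n\ge1$ bounds the right-hand side by $\frac1e\|z^{n,0}-T(z^{n,0})\|=\frac1e r^n$, so $r^{n+1}\le\frac1e r^n$ for $n\ge1$. For the base case, the $0$-th outer loop runs $\tau^0$ Halpern steps, and combining (a) with (b) yields $r^1\le\|z^{0,\tau^0}-T(z^{0,\tau^0})\|\le\frac{2\,\mathrm{dist}(z^{0,0},\mathcal Z^*)}{\tau^0+1}$. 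Iterating the contraction gives $r^{n+1}\le(1/e)^n r^1\le(1/e)^n\frac{2\,\mathrm{dist}(z^{0,0},\mathcal Z^*)}{\tau^0+1}$ for all $n\ge0$. Applying sharpness once more at $z^{n+1,0}$, namely $\mathrm{dist}(z^{n+1,0},\mathcal Z^*)\le\alpha_\eta^{-1}r^{n+1}$, and using $(1/e)^{n+1}\cdot 2e=(1/e)^n\cdot 2$, rearranges precisely to the stated bound $\mathrm{dist}(z^{n+1,0},\mathcal Z^*)\le(1/e)^{n+1}\frac{2e}{\alpha_\eta(\tau^0+1)}\mathrm{dist}(z^{0,0},\mathcal Z^*)$.

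The main obstacle I anticipate is not the algebra, which telescopes cleanly once $r^n$ is chosen as the potential, but justifying a single uniform sharpness constant: Proposition~\ref{prop:sharp} holds only on $\{\|T(z)\|_2\le R\}$, so I must first confirm that every iterate of the nested loops lies in one fixed ball. This follows from the Halpern-iterate boundedness $\|z^{n,k}-z^*\|\le\|z^{n,0}-z^*\|$ together with the monotone decrease $\mathrm{dist}(z^{n+1,0},\mathcal Z^*)\le\mathrm{dist}(z^{n,\tau^n},\mathcal Z^*)\le\mathrm{dist}(z^{n,0},\mathcal Z^*)$ across outer loops (again from fact (a)), which keeps each $T(z^{n,k})$ bounded by a radius fixed by $z^{0,0}$. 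The other delicate point is invoking the sharp $O(1/k)$ Halpern residual estimate with the exact constant $2$ and denominator $k+1$; this quantitative estimate is the engine driving both parts and is the source of the $\tau^0+1$ and $2e$ factors in the final rate.
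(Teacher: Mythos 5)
Your proposal is correct and follows essentially the same route as the paper's proof: it relies on the same three ingredients (boundedness of all iterates so that Proposition~\ref{prop:sharp} applies with one uniform $\alpha_\eta$, the Halpern residual bound of Lemma~\ref{lem:sublinear-feas}, and nonexpansiveness of $T$ combined with the restart rule), and both parts reduce to the identical chain of inequalities. The only differences are presentational — you argue part (i) directly (the condition must trigger by $k+1\geq 2e/\alpha_n$) where the paper argues via the non-triggering at $k=\tau^n-1$, and you organize part (ii) as an explicit contraction of the potential $r^n$ rather than one chained inequality — so no further changes are needed.
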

\begin{rem}
    Theorem \ref{thm:fixed-restart} and Theorem \ref{thm:adaptive-restart} imply the $O\pran{\frac{1}{\alpha_\eta}\log\frac{1}{\epsilon}}$ for fixed frequency scheme and $\widetilde O\pran{\frac{1}{\alpha_\eta}\log\frac{1}{\epsilon}}$ for adaptive restart scheme respectively, to achieve an $\epsilon$-accuracy solution of \eqref{eq:lp}. This shows both algorithms achieve (nearly) optimal rate among a class of matrix-free algorithms for solving LP~\cite{applegate2023faster}.
\end{rem}

In the rest of this section, we show the proof of Theorem \ref{thm:fixed-restart} and Theorem \ref{thm:adaptive-restart}. Recall that we assume step-size $\eta\leq \frac{1}{2\|A\|_2}$ throughout the paper. While our results hold for step-size $\eta<\frac{1}{\|A\|_2}$, assuming $\eta\leq \frac{1}{2\|A\|_2}$ makes it easy to convert between PDHG canonical norm $\|\cdot\|$ and $\ell_2$ norm $\|\cdot\|_2$, as stated in Lemma \ref{lem:change-of-norm}.
\begin{lem}\label{lem:change-of-norm}
Suppose step-size $\eta\leq \frac{1}{2\|A\|_2}$. Then it holds for any $z\in \mathbb R^{m+n}$ that
    \begin{equation*}
        \sqrt{\frac{1}{2\eta}}\|z\|_2\leq \|z\|\leq \sqrt{2\eta}\|z\|_2 \ .
    \end{equation*}
\end{lem}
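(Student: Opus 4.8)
The plan is to reduce the claim to a direct estimate of the quadratic form $\|z\|^2=\langle z,P_\eta z\rangle$ against $\|z\|_2^2$, i.e.\ to bounding the extreme eigenvalues of the symmetric matrix $P_\eta$. Writing $z=(x,y)$ with $x\in\mathbb{R}^n$ and $y\in\mathbb{R}^m$, I would first expand the block quadratic form explicitly. Since $P_\eta z=\big(\tfrac1\eta x-A^\top y,\ -Ax+\tfrac1\eta y\big)$ and $x^\top A^\top y=y^\top Ax$, this gives
\[
\langle z,P_\eta z\rangle=\tfrac1\eta\|x\|_2^2+\tfrac1\eta\|y\|_2^2-2y^\top Ax=\tfrac1\eta\|z\|_2^2-2y^\top Ax \ .
\]
So everything comes down to controlling the single coupling term $2y^\top Ax$ relative to $\|z\|_2^2=\|x\|_2^2+\|y\|_2^2$.

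The key step is to bound this cross term. By Cauchy--Schwarz and the definition of the spectral norm, $|2y^\top Ax|\le 2\|A\|_2\|x\|_2\|y\|_2$, and then AM--GM gives $2\|x\|_2\|y\|_2\le \|x\|_2^2+\|y\|_2^2=\|z\|_2^2$. Hence $|2y^\top Ax|\le\|A\|_2\|z\|_2^2$, which when combined with the expansion above immediately yields the two-sided estimate
\[
\Big(\tfrac1\eta-\|A\|_2\Big)\|z\|_2^2\le\|z\|^2\le\Big(\tfrac1\eta+\|A\|_2\Big)\|z\|_2^2 \ .
\]

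Finally I would substitute the step-size assumption $\eta\le\frac{1}{2\|A\|_2}$, equivalently $\|A\|_2\le\frac1{2\eta}$, into both coefficients: the lower one becomes $\frac1\eta-\frac1{2\eta}=\frac1{2\eta}$ and the upper one is at most $\frac1\eta+\frac1{2\eta}=\frac3{2\eta}\le\frac2\eta$. Taking square roots delivers $\sqrt{\tfrac1{2\eta}}\,\|z\|_2\le\|z\|\le\sqrt{\tfrac2\eta}\,\|z\|_2$. I do not anticipate any genuine obstacle here; the only point worth flagging is that the hypothesis $\eta\le\frac1{2\|A\|_2}$ is exactly what keeps the lower coefficient strictly positive (so that $P_\eta\succ0$ and $\|\cdot\|$ is a bona fide norm) while also producing the clean constants, whereas the weaker $\eta<\frac1{\|A\|_2}$ would still give positive definiteness but with less transparent bounds. (On dimensional grounds the stated upper constant $\sqrt{2\eta}$ should read $\sqrt{2/\eta}$, matching the $1/\sqrt\eta$ scaling of the lower bound.)
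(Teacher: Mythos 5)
Your proof is correct and is the standard argument: the paper states Lemma~\ref{lem:change-of-norm} without proof, and the block expansion $\langle z,P_\eta z\rangle=\tfrac1\eta\|z\|_2^2-2y^\top Ax$ together with Cauchy--Schwarz and AM--GM on the cross term is exactly the computation the authors implicitly rely on. You are also right that the stated upper constant $\sqrt{2\eta}$ is a typo for $\sqrt{2/\eta}$: the literal statement fails already for $A=0$ and small $\eta$ (where $\|z\|=\eta^{-1/2}\|z\|_2$), and the bound $\|z\|\leq\sqrt{2/\eta}\,\|z\|_2$ that you prove is the one the paper actually invokes later, e.g.\ the step $\mathrm{dist}(T(z),\mathcal Z^*)\leq\sqrt{\tfrac2\eta}\,\mathrm{dist}_2(T(z),\mathcal Z^*)$ in the proof of Proposition~\ref{prop:sharp}.
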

We start with a bucket of basic properties of restarted Halpern PDHG iterates which is collected in the following lemma.
\begin{lem}\label{lem:iterates-feas}
For a feasible and bounded LP~\eqref{eq:lp}, consider $\{z^{n,k}\}$ the iterates generated by restarted Halpern PDHG (Algorithm \ref{alg:hpdhg-restart}) with either fixed frequency or adaptive restart. Then, the following properties hold.
\begin{enumerate}
    \item[(i)] For any $\tilde z^*\in\mathcal Z^*$, $\|z^{n+1,0}-\tilde z^*\|\leq \|z^{n,k}- \tilde z^*\|\leq \|z^{n,0}-\tilde z^*\|$.
    \item[(ii)] There exists a constant $R>0$ such that for any $n$ and $k$, $\|z^{n,k}\|_2\leq R$.
    \item[(iii)] There exists an optimal solution $z^*$ such that $\lim_{n\rightarrow \infty}z^{n,0}=z^*$.
    \item[(iv)] For any $\tilde z^*\in\mathcal Z^*$, $\|z^*-\tilde z^*\|\leq \|z^{n,k}-\tilde z^*\|$ and $\|z^{n,k}-z^*\|\leq 2\mathrm{dist}(z^{n,k},\mathcal Z^*)$.
\end{enumerate}
\end{lem}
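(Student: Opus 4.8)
The plan is to prove the four claims in order, since each relies on the previous ones, using two external facts: that the PDHG operator $T$ is nonexpansive with respect to the canonical norm $\|\cdot\|$ (with $\eta \le \tfrac{1}{2\|A\|_2}$ the preconditioner $P_\eta$ is positive definite, so $T$ is firmly nonexpansive under $\|\cdot\|$, and every point of $\mathcal{Z}^*$ is a fixed point of $T$), and the change-of-norm estimate in Lemma \ref{lem:change-of-norm}.

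For part (i) I would treat the two inequalities separately. The first, $\|z^{n+1,0}-\tilde z^*\|\le \|z^{n,k}-\tilde z^*\|$, is immediate from $z^{n+1,0}=T(z^{n,k})$, $T(\tilde z^*)=\tilde z^*$, and nonexpansiveness. For the second, I would expand the Halpern update $z^{n,k+1}=\tfrac{k+1}{k+2}T(z^{n,k})+\tfrac{1}{k+2}z^{n,0}$ as a convex combination, apply the triangle inequality together with $\|T(z^{n,k})-\tilde z^*\|\le\|z^{n,k}-\tilde z^*\|$ to obtain $\|z^{n,k+1}-\tilde z^*\|\le \tfrac{1}{k+2}\|z^{n,0}-\tilde z^*\|+\tfrac{k+1}{k+2}\|z^{n,k}-\tilde z^*\|$, and then induct on $k$: if $\|z^{n,k}-\tilde z^*\|\le\|z^{n,0}-\tilde z^*\|$ then the convex combination stays below $\|z^{n,0}-\tilde z^*\|$. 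For part (ii), chaining (i) across epochs gives $\|z^{n,k}-\tilde z^*\|\le\|z^{0,0}-\tilde z^*\|$ for a fixed $\tilde z^*\in\mathcal{Z}^*$ and all $n,k$; converting to the $\ell_2$-norm via Lemma \ref{lem:change-of-norm} and adding $\|\tilde z^*\|_2$ yields a uniform $R$.

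Part (iii) is the crux. Claim (i) shows that $\{z^{n,0}\}$ is Fej\'er-monotone with respect to $\mathcal{Z}^*$, so it suffices to invoke the standard fact that a Fej\'er-monotone sequence all of whose cluster points lie in a closed set converges to a single point of that set. Since $\mathcal{Z}^*$ is closed, I only need $\mathrm{dist}(z^{n,0},\mathcal{Z}^*)\to 0$ to force every cluster point into $\mathcal{Z}^*$; this vanishing is exactly the content of Theorems \ref{thm:fixed-restart} and \ref{thm:adaptive-restart}. The one thing to verify carefully is the logical order: the proofs of those theorems should use only parts (i) and (ii), the Halpern fixed-point-residual bound, and the sharpness estimate of Proposition \ref{prop:sharp}, and in particular not part (iii), so that there is no circularity. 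This is the main obstacle, and I would flag it explicitly.

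For part (iv), I would first note that by (i) the scalar sequence $\{\|z^{m,0}-\tilde z^*\|\}_m$ is nonincreasing, hence its limit equals its infimum; combined with (iii) this gives $\|z^*-\tilde z^*\|=\lim_m\|z^{m,0}-\tilde z^*\|\le\|z^{n+1,0}-\tilde z^*\|\le\|z^{n,k}-\tilde z^*\|$, which is the first estimate. Specializing $\tilde z^*$ to the projection of $z^{n,k}$ onto $\mathcal{Z}^*$, the first estimate reads $\|z^*-\tilde z^*\|\le\mathrm{dist}(z^{n,k},\mathcal{Z}^*)$, and the triangle inequality $\|z^{n,k}-z^*\|\le\|z^{n,k}-\tilde z^*\|+\|\tilde z^*-z^*\|\le 2\,\mathrm{dist}(z^{n,k},\mathcal{Z}^*)$ yields the second. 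All of (i), (ii), and (iv) are then elementary consequences of nonexpansiveness and the norm equivalence, with the genuine work concentrated in the Fej\'er-convergence argument underlying (iii).
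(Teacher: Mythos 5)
Your proposal is correct and follows essentially the same route as the paper: the same induction plus nonexpansiveness argument for (i), the same norm-conversion bound for (ii), Fej\'er monotonicity plus a cluster-point argument for (iii), and the same limit-plus-triangle-inequality argument for (iv). The one place you genuinely diverge is the justification that a cluster point of $\{z^{n,0}\}$ lies in $\mathcal{Z}^*$: the paper's proof of (iii) simply asserts that a subsequential limit is an optimal solution, whereas you derive it from $\mathrm{dist}(z^{n,0},\mathcal{Z}^*)\to 0$ via Theorems \ref{thm:fixed-restart} and \ref{thm:adaptive-restart} and explicitly check the logical ordering; that check passes, since the proofs of both theorems invoke only parts (i)--(ii) of this lemma (for boundedness), Proposition \ref{prop:sharp}, and Lemma \ref{lem:sublinear-feas}, never parts (iii)--(iv), so your forward reference is non-circular and in fact fills a step the paper leaves implicit. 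One caveat you share with the paper: in (i), the identity $z^{n+1,0}=T(z^{n,k})$ holds only for the final inner iterate $k=\tau^n$, so what is actually established is $\|z^{n+1,0}-\tilde z^*\|\le\|z^{n,\tau^n}-\tilde z^*\|$ together with $\|z^{n,k}-\tilde z^*\|\le\|z^{n,0}-\tilde z^*\|$ for all $k$, not the literal chain for every intermediate $k$; this matches the paper's own proof, so it is not a gap relative to it, but it is worth being aware that Halpern iterates need not be monotone in $k$ within an epoch.
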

\begin{proof}
(i) We prove by induction. Suppose $\|z^{n,k-1}-\tilde z^*\|\leq \|z^{n,0}-\tilde z^*\|$. Then
\begin{equation*}
        \begin{aligned}
            \|z^{n,k}-\tilde z^*\|&=\left\|\frac{k}{k+1} \PDHG(z^{n,k-1})+\frac{1}{k+1} z^{n,0}-\tilde z^*\right\|\leq \frac{k}{k+1} \|\PDHG(z^{n,k-1})-\tilde z^*\|+\frac{1}{k+1} \|z^{n,0}-\tilde z^*\|\\
            &\leq \frac{k}{k+1} \|z^{n,k-1}-\tilde z^*\|+\frac{1}{k+1} \|z^{n,0}-\tilde z^*\| \leq \|z^{n,0}-\tilde z^*\| \ ,
        \end{aligned}
    \end{equation*}
    where the second inequality is due to the nonexpansiveness of $T$, and the last inequality follows from the induction assumption. In addition, $\|z^{n+1,0}-\tilde z^*\|=\|\PDHG(z^{n,\tau^n})-\tilde z^*\|\leq \|z^{n,\tau^n}-\tilde z^*\|\leq \|z^{n,0}-\tilde z^*\|$.

(ii) Select an optimal solution $\tilde z^*\in\mathcal Z^*$. Denote $R=2(\|z^{0,0}-\tilde z^*\|_2+\|\tilde z^*\|_2)$. Then it holds for any $n$ and $k$ that
\begin{equation*}
\begin{aligned}
    \|z^{n,k}\|_2&\leq \sqrt{2\eta} \|z^{n,k}\|\leq \sqrt{2\eta}\|z^{n,k}- \tilde z^*\|+\sqrt{2\eta}\|\tilde z^*\|\leq \sqrt{2\eta}\|z^{n,0}-\tilde z^*\|+\sqrt{2\eta}\|\tilde z^*\|\\
    &\leq \sqrt{2\eta}\|z^{0,0}-\tilde z^*\|+\sqrt{2\eta}\|\tilde z^*\| \leq 2(\|z^{0,0}-\tilde z^*\|_2+\|\tilde z^*\|_2)=R\ ,
\end{aligned}
\end{equation*}
where the third and the fourth inequalities utilize (i).

(iii) Since $z^{n,0}$ is bounded, there exists an optimal solution $z^*$ and a subsequence $n_k$ such that $z^{n_k,0}\rightarrow z^*$. Note that $\|z^{n,0}-z^*\|$ is non-increasing and we know $z^{n,0}\rightarrow z^*$. 

(iv) For any $n'\geq n+1$ and any $\tilde z^*\in\mathcal Z^*$,
\begin{equation*}
    \|z^{n',0}-\tilde z^*\|\leq \|z^{n+1,0}-\tilde z^*\|\ .
\end{equation*}
Let $n'\rightarrow \infty$ and thus $z^{n',0}\rightarrow z^*$. We have $\|z^*-\tilde z^*\|\leq \|z^{n+1,0}-\tilde z^*\|\leq \|z^{n,k}-\tilde z^*\|$. In addition, 
\begin{equation*}
    \|z^{n,k}-z^*\|\leq \|z^{n,k}-P_{\mathcal Z^*}(z^{n,k})\|+\|z^*-P_{\mathcal Z^*}(z^{n,k})\|\leq 2\|z^{n,k}-P_{\mathcal Z^*}(z^{n,k})\| = 2\mathrm{dist}(z^{n,k},\mathcal Z^*) \ ,
\end{equation*}
where $P_{\mathcal Z^*}(z)$ is the projection in the corresponding norm of $z$ to the set $Z^*$.
\end{proof}

The following lemma shows the sublinear convergence of Halpern PDHG.
\begin{lem}[{\cite[Theorem 2.1 and its proof (page 4)]{lieder2021convergence}}]\label{lem:sublinear-feas}
    Consider $\{z^{k}\}$ the iterates of Halpern PDHG on solving feasible \eqref{eq:lp}. Denote $z^*\in \mathcal Z^*:=\{z^*\mid \PDHG(z^*)-z^*=0\}$. Then it holds for any $k\geq 1$ that
    \begin{equation*}
        \left\|z^k-\PDHG(z^k)\right\|\leq \frac{2}{k+1}\mathrm{dist}(z^0,\mathcal Z^*), \ \mathrm{and} \ \left\|z^k-\PDHG(z^k)\right\|\leq \frac{2}{k}\left\|z^k-z^0\right\|
    \end{equation*}
\end{lem}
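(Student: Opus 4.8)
The statement is precisely the standard $O(1/k)$ bound on the fixed-point residual of the Halpern iteration \eqref{eq:hpdhg} applied to the PDHG operator $T$, i.e.\ the content of \cite[Theorem 2.1]{lieder2021convergence}. The plan is therefore to reduce to that general result by verifying its two hypotheses in our setting and then to reproduce the short estimates that give the two displayed bounds. The key structural fact is that $T$ in \eqref{eq:pdhg} is nonexpansive in the canonical norm $\|\cdot\|$ induced by $P_\eta$. Since $\eta\le\frac{1}{2\|A\|_2}$ (so $P_\eta\succ 0$ and $\eta<\frac{1}{\|A\|_2}$), $T$ is in fact firmly nonexpansive (averaged) in $\|\cdot\|$, and I would record $\|T(z)-T(w)\|\le\|z-w\|$ directly from the proximal-gradient form of the update. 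Feasibility and boundedness of the LP make the KKT system solvable, so $\mathcal Z^*=\mathrm{Fix}(T)\neq\emptyset$; fix any $z^*\in\mathcal Z^*$.

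For the first bound I would reprove the Halpern rate. Write $r^k:=z^k-T(z^k)$. A short induction on \eqref{eq:hpdhg} using nonexpansiveness and convexity of $\|\cdot\|$ gives boundedness $\|z^k-z^*\|\le\|z^0-z^*\|$ (this is exactly the argument in Lemma \ref{lem:iterates-feas}(i)). The crux is then the estimate $\|r^k\|\le\frac{2}{k+1}\|z^0-z^*\|$, which I would obtain from the monotonicity $\|r^{k+1}\|\le\|r^k\|$ of the residuals together with a weighted telescoping of the iteration; this is where the sharp constant $2$ (rather than a loose $O(1/k)$) comes from. Taking the infimum over $z^*\in\mathcal Z^*$ then replaces $\|z^0-z^*\|$ by $\mathrm{dist}(z^0,\mathcal Z^*)$ and yields the first inequality.

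For the second (trajectory) bound, which avoids any reference to $\mathcal Z^*$, I would unroll \eqref{eq:hpdhg}. From $z^{j+1}=\frac{1}{j+2}z^0+\frac{j+1}{j+2}T(z^j)$ one gets $z^{j+1}-z^0=\frac{j+1}{j+2}\pran{T(z^j)-z^0}$, and substituting $T(z^j)-z^0=(z^j-z^0)-r^j$ and telescoping over $j$ produces the exact identity
\[
    (k+1)(z^k-z^0)=-\sum_{j=0}^{k-1}(j+1)\,r^j \ .
\]
Pairing this with $r^k$ and invoking the co-monotonicity of the Halpern residuals, i.e.\ $\langle r^j,r^k\rangle\ge\|r^k\|^2$ for $j\le k$, gives $(k+1)\langle z^0-z^k,r^k\rangle\ge\frac{k(k+1)}{2}\|r^k\|^2$, and Cauchy--Schwarz then delivers $\|r^k\|\le\frac{2}{k}\|z^k-z^0\|$.

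The main obstacle is not the reduction itself, since nonexpansiveness of PDHG in $\|\cdot\|$ and nonemptiness of $\mathcal Z^*$ are routine; it is the two sharp ingredients that constitute the heart of Lieder's proof: (a) the exact $\frac{2}{k+1}$ rate for the residual, which needs the monotone-potential/telescoping argument pinning down the constant, and (b) the alignment inequality $\langle r^j,r^k\rangle\ge\|r^k\|^2$ required to turn the unrolled identity into a \emph{lower} bound on $\|z^k-z^0\|$ (a mere triangle-inequality bound would go the wrong way, since the $r^j$ could a priori cancel). Both follow from the nonexpansiveness of $T$ and the specific anchoring weights $\frac{1}{k+2}$ in \eqref{eq:hpdhg}, so the argument is genuinely operator-agnostic and applies verbatim to the PDHG operator here.
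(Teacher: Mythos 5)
Your top-level plan --- verify that the PDHG operator $T$ is nonexpansive in the canonical norm $\|\cdot\|$ and that $\mathcal Z^*=\mathrm{Fix}(T)\neq\emptyset$ for feasible bounded LP, then invoke Lieder's Theorem 2.1 and the intermediate inequality from its proof --- is exactly what the paper does; the paper offers no proof of this lemma beyond that citation. The genuine gap is in your reconstruction of Lieder's argument: the ``co-monotonicity'' inequality $\langle r^j,r^k\rangle\ge\|r^k\|^2$ for $j\le k$ does \emph{not} follow from nonexpansiveness and is false in general. Take $T$ to be the rotation by $\pi/2$ in $\mathbb R^2$ (nonexpansive, fixed point at the origin) and $z^0=(1,0)$. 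The Halpern iterates are $z^1=(\tfrac12,\tfrac12)$ and $z^2=(0,\tfrac13)$, with residuals $r^0=(1,-1)$ and $r^2=(\tfrac13,\tfrac13)$, so that
\begin{equation*}
    \langle r^0,r^2\rangle \;=\; 0 \;<\; \tfrac29 \;=\; \|r^2\|^2 \ .
\end{equation*}
This is not a pathological case for the present setting: the bilinear coupling makes PDHG behave like a rotation--contraction, so any proof that works here must survive rotations.

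The target of your argument is nonetheless the right one. The aggregate anchoring inequality $\langle z^0-z^k,r^k\rangle\ge\frac k2\|r^k\|^2$ (which holds \emph{with equality} in the rotation example above, confirming that only the term-by-term lower bound fails) is true, and it is precisely what ``page 4'' of Lieder's proof supplies; but it must be proved by induction on $k$, using the $\tfrac12$-cocoercivity of $I-T$ between \emph{consecutive iterates} $z^k$ and $z^{k+1}$ together with the anchoring identity $z^0-z^{k+1}=\frac{k+1}{k+2}\left(z^0-z^k+r^k\right)$, not by bounding each $\langle r^j,r^k\rangle$ in the unrolled sum. Once that inequality is available, both displayed bounds follow as you say: Cauchy--Schwarz gives $\|r^k\|\le\frac 2k\|z^k-z^0\|$, and adding the cocoercivity inequality $\langle z^k-z^*,r^k\rangle\ge\frac12\|r^k\|^2$ at any $z^*\in\mathcal Z^*$ before applying Cauchy--Schwarz gives $\|r^k\|\le\frac{2}{k+1}\|z^0-z^*\|$, whence the first bound after taking the infimum over $z^*$. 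For the same reason, your sketch of the first bound via ``monotonicity of $\|r^k\|$ plus weighted telescoping'' is not a proof: the residual monotonicity is asserted rather than established, and even granted it would not by itself produce the constant $2$; that constant comes from the induction just described.
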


Now we are ready to prove Theorem \ref{thm:fixed-restart} and Theorem \ref{thm:adaptive-restart}.
\begin{proof}[Proof of Theorem \ref{thm:fixed-restart}]
From part (ii) of Lemma \ref{lem:iterates-feas}, we know the iterates $z^{n,k}$ are bounded. Thus it holds that
    \begin{equation*}
        \begin{aligned}
            \mathrm{dist}(z^{n+1,0},\mathcal Z^*)& \leq \frac{1}{\alpha_\eta}\left\|z^{n+1,0}-\PDHG(z^{n+1,0})\right\| = \frac{1}{\alpha_\eta}\left\|\PDHG(z^{n,k^*})-\PDHG(\PDHG(z^{n,k^*}))\right\|\leq  \frac{1}{\alpha_\eta}\left\|z^{n,k^*}-\PDHG(z^{n,k^*})\right\|\\
            &\leq \frac{2}{\alpha_\eta(k^*+1)}\mathrm{dist}(z^{n,0},\mathcal Z^*)\leq \frac 1e \mathrm{dist}(z^{n,0},\mathcal Z^*) \leq ...\leq \pran{\frac 1e}^{n+1} \mathrm{dist}(z^{0,0},\mathcal Z^*) \ ,
        \end{aligned}
    \end{equation*}
    where the first inequality utilized Proposition \ref{prop:sharp} and the first equality follows from the definition of $z^{n+1,0}=\PDHG(z^{n,k^*})$. The second and third inequality use non-expansiveness of PDHG operator $\PDHG$ and Lemma \ref{lem:sublinear-feas}, respectively. The fourth inequality is due to the choice of restart frequency $k^*$. The proof is completed by recursion.
\end{proof}

\begin{proof}[Proof of Theorem \ref{thm:adaptive-restart}]
(i) Due to the restart condition \eqref{eq:adaptive-restart}, it holds for any $n\geq 0$ with $k=\tau^n-1$ that
\begin{equation*}
    \frac 1e\|z^{n,0}-\PDHG(z^{n,0})\|<\|z^{n,\tau^n-1}-\PDHG(z^{n,\tau^n-1})\|\leq \frac{2}{\tau^n}\mathrm{dist} (z^{n,0}, \mathcal Z^*) \ ,
\end{equation*}
where the last inequality follows from Lemma \ref{lem:sublinear-feas}. Thus we have
\begin{equation*}
    \tau^n\leq 2e\frac{\mathrm{dist}(z^{n,0},\mathcal Z^*)}{\|z^{n,0}-\PDHG(z^{n,0})\|}=\frac{2e}{\alpha_n}\leq \frac{2e}{\alpha_\eta}\ ,
\end{equation*}
where the equality is from the definition of $\alpha_n$ and the last inequality uses $\alpha_n\geq \alpha_\eta$ by noticing that $z^{n,k}$ is bounded from part (ii) in Lemma \ref{lem:iterates-feas}.

(ii) It follows from the definition of sharpness and restart condition \eqref{eq:adaptive-restart} that
\begin{equation*}
        \begin{aligned}
            \mathrm{dist}(z^{n+1,0},\mathcal Z^*)& \leq \frac{1}{\alpha_\eta}\left\|z^{n+1,0}-\PDHG(z^{n+1,0})\right\| =\frac{1}{\alpha_\eta}\left\|\PDHG(z^{n,\tau^n})-\PDHG(\PDHG(z^{n,\tau^n}))\right\|\leq \frac{1}{\alpha_\eta}\left\|z^{n,\tau^n}-\PDHG(z^{n,\tau^n})\right\|\\
            & \leq \frac{1}{\alpha_\eta e}\left\|z^{n,0}-\PDHG(z^{n,0})\right\| \leq \ldots \leq \frac{1}{\alpha_\eta e^n}\left\|z^{1,0}-\PDHG(z^{1,0})\right\|\\
            & \leq \pran{\frac 1e}^{n+1}\frac{2e}{\alpha_\eta (\tau^0+1)}\|z^{0,0}-z^*\| = \pran{\frac 1e}^{n+1}\frac{2e}{\alpha_\eta (\tau^0+1)}\mathrm{dist}(z^{0,0},\mathcal Z^*) \ ,
        \end{aligned}
    \end{equation*}
    where the last inequality is from Lemma \ref{lem:sublinear-feas} and the last equality is due to the choice of $z^*=\argmin_{\tilde z^*\in\mathcal Z^*}\|\tilde z^*-z^{0,0}\|$.
\end{proof}

\section{Accelerated two-stage convergence behaviors of rHPDHG}\label{sec:two_stage}
Section \ref{sec:accelerated_rate} presents the global linear convergence of restarted Halpern PDHG. The linear rate depends on the global sharpness constant of LP \eqref{eq:lp}, which in turn depends on the well-known overly conservative Hoffman constant~\cite{hoffman1952approximate,pena2021new,applegate2023faster}. In this sense, this kind of convergence complexity is often too conservative to capture the actual behavior of the algorithm due to its dependence on global sharpness constant (though the rate is tight on worst-case instances). At the same time, it is typical that the algorithm exhibits a two-stage convergence behavior, which is not characterized by the worst-case analysis in Section \ref{sec:accelerated_rate}. In light of these, a refined analysis of vanilla PDHG was presented in \cite{lu2023geometry} that avoids using global sharpness constant and characterizes its two-stage convergence behaviors. However, as shown in \cite{applegate2023faster,lu2022infimal}, the vanilla PDHG is sub-optimal for solving LP, and it is highly challenging to extend the two-stage analysis to raPDHG. The fundamental reason for this difficulty is due to the lack of convergence guarantee on fixed point residual or IDS (introduced in \cite{lu2022infimal}) at the average iteration of PDHG. In this section, we present an accelerated two-stage convergence rate of rHPDHG that avoids the conservative global linear convergence rate, particularly,

\begin{itemize}
    \item In the first stage, Algorithm \ref{alg:hpdhg-restart} identifies active variables. This stage terminates within a finite number of iterations, which can be characterized by a near-degeneracy parameter introduced in \cite{lu2023geometry}. Furthermore, the number of iterations for identification is at the order of square root of vanilla PDHG as in \cite{lu2023geometry}, thus enjoying an accelerated identification rate. 
    
    \item In the second stage, the algorithm essentially solves a system of homogeneous linear inequalities, where the rate of linear convergence is characterized by a local sharpness constant that is much larger than the global system~\cite{pena2024easily,lu2023geometry}. Furthermore, the eventual linear convergence rate is at the order of the square root of vanilla PDHG as in \cite{lu2023geometry}, thus enjoying an accelerated eventual linear convergence rate. 
\end{itemize}

\subsection{Stage I: finite time identification}
In this section, we show that rHPDHG identifies the active basis of the converging optimal solution within a finite number of iterations, i.e., we have finite time identification, even if the problem is degenerate. We call this the identification stage of the algorithm. Furthermore, the duration of this initial stage is characterized by a measure that characterizes the proximity of the non-degenerate components of the optimal solution to degeneracy introduced in \cite{lu2023geometry}.

First, let $z^*=(x^*,y^*)$ be the converging optimal solution of Algorithm \ref{alg:hpdhg-restart} with the adaptive restart scheme. Next, we define an index partition of primal variables:
\begin{mydef}[{\cite[Definition 4]{lu2023geometry}}]\label{def:partition}
For a linear programming~\eqref{eq:minmax} and an optimal primal-dual solution $z^*=(x^*,y^*)$, we define the index partition $(N,B_1,B_2)$ of the primal variable coordinates as
    \begin{align*}
     \ N&=\{i\in [n]: c_i+A_i^\top y^*>0\} \\ \ B_1&=\{i\in [n]: c_i+A_i^\top y^*=0, x_i^*>0\} \\ \  B_2&=\{i\in [n]: c_i+A_i^\top y^*=0, x_i^*=0\} \ .
\end{align*}
\end{mydef}
Following the terminology of \cite{lu2023geometry} (and in analogy to the notions in simplex method), we call the elements in $N$ the non-basic variables, the elements in $B_1$ the non-degenerate basic variables, and the elements in $B_2$ the degenerate basic variables. Furthermore, we call the elements in $N\cup B_1$ the non-degenerate variables and the elements in $B_2$ the degenerate variables. We also denote $B=B_1\cup B_2$ as the set of basic variables. Below we introduce the non-degeneracy metric $\delta$, which essentially measures how close the non-degenerate part of the optimal solution is to degeneracy:

\begin{mydef}[{\cite[Definition 5]{lu2023geometry}}]\label{def:delta}
For a linear programming~\eqref{eq:minmax} and the converging optimal primal-dual solution pair $z^*=(x^*,y^*)$, we define the non-degeneracy metric $\delta$ as:
\begin{equation*}
    \delta := %\frac{1}{\max\{ 1,\|A\|_2\}}
    \min\left\{\min_{i\in N} \frac{c_i+A_i^\top y^*}{\|A\|_2},\;\min_{i\in B_1} x_i^*\right\} \ .
\end{equation*}
% \end{align*}
\end{mydef}
For every optimal solution $z^*$ to a linear programming, the non-degeneracy metric $\delta$ is always strictly positive. The smaller the $\delta$ is, the closer the non-degenerate part of the solution is to degeneracy.

Next, we introduce the sharpness to a homogeneous conic system  $\hat\alpha_{L_1}$ that arises in the complexity of identification. Denote $R=2(\|z^{0,0}-z^*\|_2+\|z^*\|_2)$.
Consider the following homogeneous linear inequality system
\begin{equation}\label{eq:active-cone}
    -A_B^\top v\leq 0,Au=0, u_{N\cup B_2}\geq 0, \frac 1R(c^\top u+b^\top v)\leq 0 \ ,
\end{equation}
and denote $\mathcal K:=\left\{(u,v)\;|\;-A_B^\top v\leq 0,Au=0, u_{N\cup B_2}\geq 0, \tfrac{1}{R}(c^\top u+b^\top v)\leq 0\right\}$ the feasible set of \eqref{eq:active-cone}.
Denote $\alpha_{L_1}$ the sharpness constant to \eqref{eq:active-cone}, i.e., for any $(u,v)\in\mathbb R^{m+n}$,
\begin{equation}\label{eq:def_alphaL}
    \alpha_{L_1}\mathrm{dist}_2((u,v),\mathcal K)\leq \left\|\begin{pmatrix}
        [-A_B^\top v]^+ \\ Au \\ [-u_{N\cup B_2}]^+ \\ \tfrac{1}{R}[c^\top u+b^\top v]^+ \end{pmatrix}\right\|_2   \ .
\end{equation}
We can view sharpness $\alpha_{L_1}$ as a local sharpness condition of a homogeneous linear inequality system around the converging optimal solution $z^*$. Following the results in \cite{pena2024easily,lu2023geometry}, $\alpha_{L_1}$ is an extension of the minimal non-zero singular value of a certain matrix, while the global sharpness constant, i.e., the Hoffman constant, can be viewed of an extension of the minimal non-zero singular value of all sub-matrices of the matrix. Notice that there can be exponentially many sub-matrices, thus $\alpha_{L_1}$ provides a much sharper bound than the global sharpness constant, which is overly conservative~\cite{pena2024easily,lu2023geometry}.

 Denote set $\mathcal Z_L^*:=\{(x, y)\;|\;-A_B^Ty\leq c_B,\; Ax=b,\; x_{N\cup B_2}\geq 0,\; \frac 1R(c^Tx+b^Ty)\leq 0\}=z^*+\mathcal K$. The following proposition shows that $\alpha_{L_1}$ provides a bound on the sharpness constant to $\mathcal Z_L^*$.
\begin{prop}\label{prop:sharp-stage-1}
Denote $\alpha_{\eta}^{L_1}=\frac{\eta\alpha_{L_1}}{\eta\alpha_{L_1}+2}$.
Then, for any outer iteration $n$ and inner iteration $k$ such that $\|z^{n,k}\|_2\leq R$, it holds that
\begin{equation*}
    \alpha_{\eta}^{L_1}\mathrm{dist}(z^{n,k},\mathcal Z_L^*) \leq \|z^{n,k}-\PDHG(z^{n,k})\| \ .
\end{equation*}
\end{prop}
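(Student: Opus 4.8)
The plan is to mirror the proof of Proposition \ref{prop:sharp}, replacing the global KKT sharpness by the local sharpness constant $\alpha_{L_1}$ of the homogeneous system \eqref{eq:active-cone}, and then transferring that Euclidean bound to the PDHG fixed-point residual through the inclusion $P_\eta(z-T(z))\in\mathcal F(T(z))$ together with the norm equivalence of Lemma \ref{lem:change-of-norm}. Since $\alpha_\eta^{L_1}=\frac{\eta\alpha_{L_1}}{\eta\alpha_{L_1}+2}$ has exactly the same shape as $\alpha_\eta$, I expect the constants to propagate identically, so the final chain should read $\mathrm{dist}(z,\mathcal Z_L^*)\le\big(1+\tfrac{2}{\eta\alpha_{L_1}}\big)\|z-T(z)\|=\tfrac{1}{\alpha_\eta^{L_1}}\|z-T(z)\|$.

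First I would upgrade \eqref{eq:def_alphaL} into a Euclidean sharpness bound for the \emph{shifted} set $\mathcal Z_L^*=z^*+\mathcal K$. Using the KKT identities at the converging optimum $z^*=(x^*,y^*)$, namely $-A_B^\top y^*=c_B$, $Ax^*=b$, $x^*_{N\cup B_2}=0$, and $c^\top x^*+b^\top y^*=0$ by strong duality, substituting $(u,v)=z-z^*$ into the residual of \eqref{eq:def_alphaL} absorbs every $z^*$-term; combined with $\mathrm{dist}_2(z,\mathcal Z_L^*)=\mathrm{dist}_2(z-z^*,\mathcal K)$ this gives, for every $z=(x,y)$, the bound $\alpha_{L_1}\mathrm{dist}_2(z,\mathcal Z_L^*)\le r(z)$, where $r(z)$ is the $\ell_2$-norm of the stacked vector with blocks $[-A_B^\top y-c_B]^+$, $Ax-b$, $[-x_{N\cup B_2}]^+$, and $\tfrac1R[c^\top x+b^\top y]^+$. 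I would then apply the triangle inequality $\mathrm{dist}(z,\mathcal Z_L^*)\le\mathrm{dist}(T(z),\mathcal Z_L^*)+\|z-T(z)\|$, convert the first term to $\ell_2$ via Lemma \ref{lem:change-of-norm}, and invoke the sharpness bound at the point $T(z)$, reducing everything to controlling $r(T(z))$ by the fixed-point residual.

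The core computation is bounding $r(T(z))$ block by block. Writing $T(z)=(\tilde x,\tilde y)$ and $g=P_\eta(z-T(z))=(g_x,g_y)\in\mathcal F(\tilde x,\tilde y)$, the inclusion unpacks into $c+A^\top\tilde y-g_x\in N_{\mathbb R_+^n}(\tilde x)$ (so $c+A^\top\tilde y\ge g_x$ with complementarity $\tilde x^\top(c+A^\top\tilde y-g_x)=0$) and $b-A\tilde x=g_y$. These yield: block one, $[-A_i^\top\tilde y-c_i]^+\le[-(g_x)_i]^+$ for $i\in B$; block two, $A\tilde x-b=-g_y$; block three, $[-\tilde x_{N\cup B_2}]^+=0$ because $\tilde x=\mathrm{proj}_{\mathbb R_+^n}(\cdot)\ge0$ by the update \eqref{eq:pdhg}; and block four, where complementarity gives $c^\top\tilde x=\tilde x^\top g_x-\tilde x^\top A^\top\tilde y$ and $b^\top\tilde y=g_y^\top\tilde y+\tilde x^\top A^\top\tilde y$, hence $c^\top\tilde x+b^\top\tilde y=\langle g,T(z)\rangle$. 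Collecting these shows $r(T(z))\lesssim\|g\|_2=\|P_\eta(z-T(z))\|_2$, and since $\|P_\eta(z-T(z))\|_{P_\eta^{-1}}=\|z-T(z)\|$, the norm equivalence of Lemma \ref{lem:change-of-norm} converts this into $r(T(z))\le\sqrt{2/\eta}\,\|z-T(z)\|$, which closes the chain exactly as in Proposition \ref{prop:sharp}.

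The main obstacle is the objective-gap block four. It is the only term where complementarity and the $1/R$ normalization are both essential: the bound $\tfrac1R[c^\top\tilde x+b^\top\tilde y]^+=\tfrac1R[\langle g,T(z)\rangle]^+\le\tfrac1R\|g\|_2\|T(z)\|_2$ is useful only because $\|T(z)\|_2\le R$, so that the factor $R$ cancels and the term becomes $O(\|z-T(z)\|)$; this is precisely why \eqref{eq:active-cone} is scaled by $1/R$. I would therefore need to verify that the hypothesis $\|z^{n,k}\|_2\le R$ propagates to $\|T(z^{n,k})\|_2\le R$, which follows from nonexpansiveness of $T$ (giving $\|T(z^{n,k})-z^*\|\le\|z^{n,k}-z^*\|$) together with the boundedness argument underlying Lemma \ref{lem:iterates-feas}(ii); the remaining blocks and all norm conversions are routine.
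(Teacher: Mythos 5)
Your outer chain is exactly the paper's proof: triangle inequality $\mathrm{dist}(z^{n,k},\mathcal Z_L^*)\le\mathrm{dist}(T(z^{n,k}),\mathcal Z_L^*)+\|z^{n,k}-T(z^{n,k})\|$, norm conversion via Lemma \ref{lem:change-of-norm}, Euclidean sharpness invoked at the point $T(z^{n,k})$, and the inclusion $P_\eta(z^{n,k}-T(z^{n,k}))\in\mathcal F(T(z^{n,k}))$. The genuine difference is the middle ingredient: the paper simply cites \cite[part (c) of Lemma 3]{lu2023geometry} for the inequality $\alpha_{L_1}\mathrm{dist}_2(z,\mathcal Z_L^*)\le\mathrm{dist}_2(0,\mathcal F(z))$ when $\|z\|_2\le R$, whereas you re-derive it from the definition \eqref{eq:def_alphaL}: the shift $\mathcal Z_L^*=z^*+\mathcal K$ combined with the KKT identities at $z^*$, plus a block-by-block control of the inhomogeneous residual $r(T(z))$ using complementarity ($c^\top\tilde x+b^\top\tilde y=\langle g,T(z)\rangle$) and the cancellation of the $1/R$ scaling against $\|T(z)\|_2\le R$. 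This inlining is correct and makes the proof self-contained where the paper's is citation-based; your justification that $\|z^{n,k}\|_2\le R$ propagates to $\|T(z^{n,k})\|_2\le R$ (nonexpansiveness plus the argument behind Lemma \ref{lem:iterates-feas}(ii)) is in fact more explicit than the paper's one-line assertion. Two blemishes. First, a sign slip: the inclusion should read $g_x-(c+A^\top\tilde y)\in N_{\mathbb R_+^n}(\tilde x)$, not the reverse, although the consequences you state ($c+A^\top\tilde y\ge g_x$ and $\tilde x^\top(c+A^\top\tilde y-g_x)=0$) are the correct ones. Second, and more substantively, the constant you suppress when writing the residual bound "up to a constant" is real: blocks one, two, and four together give $r(T(z))^2\le\|g_x\|_2^2+\|g_y\|_2^2+\|g\|_2^2\le 2\|g\|_2^2$, i.e., $r(T(z))\le\sqrt 2\,\|g\|_2$, so the chain you outline yields $\mathrm{dist}(z^{n,k},\mathcal Z_L^*)\le\left(1+\tfrac{2\sqrt 2}{\eta\alpha_{L_1}}\right)\|z^{n,k}-T(z^{n,k})\|$ rather than exactly $\tfrac{1}{\alpha_\eta^{L_1}}\|z^{n,k}-T(z^{n,k})\|$. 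This only degrades the constant (replace $\alpha_\eta^{L_1}$ by $\tfrac{\eta\alpha_{L_1}}{\eta\alpha_{L_1}+2\sqrt 2}$, harmless for everything downstream), but as written your derivation does not recover the precise constant claimed in the proposition; the paper sidesteps this because the cited lemma's inequality, constant included, is taken as given.
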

\begin{proof}
    By~\cite[part (c) of Lemma 3]{lu2023geometry}, we know that for any $R>0$ and any $z$ with $\|z\|_2\leq R$,
    \begin{equation}\label{eq:eucildean-sharp-stage-1}
        \alpha_{L_1}\mathrm{dist}_2(z,\mathcal Z_L^*)\leq \mathrm{dist}_2(0,\mathcal F(z)) \ ,
    \end{equation}
    where $\mathcal F(z)=\mathcal F(x,y)=\begin{pmatrix}
        c+A^\top y+\partial \iota_{\mathbb R_+^n}(x) \\ b-Ax
    \end{pmatrix}$ is the sub-differential to \eqref{eq:minmax}. 

   Note that $\|T(z^{n,k})\|_2\leq R$ since $\|z^{n,k}\|_2\leq R$ and we have
   {\small
    \begin{equation*}
        \begin{aligned}
            \mathrm{dist}(z^{n,k},\mathcal Z^*) &\leq \mathrm{dist}(T(z^{n,k}),\mathcal Z^*) + \|z^{n,k}-T(z^{n,k})\|\leq \sqrt{\frac 2\eta} \mathrm{dist}_2(T(z^{n,k}),\mathcal Z^*)+ \|z^{n,k}-T(z^{n,k})\|\\
            &\leq \sqrt{\frac2\eta}\frac{1}{\alpha_{L_1}}\mathrm{dist}_2(0,\mathcal F(T(z^{n,k})))+ \|z^{n,k}-T(z^{n,k})\| \leq \frac{2}{\eta\alpha_{L_1}}\mathrm{dist}_{P_\eta^{-1}}(0,\mathcal F(T(z^{n,k})))+ \|z^{n,k}-T(z^{n,k})\|\\
            & \leq \frac{2}{\eta\alpha_{L_1}}\|z^{n,k}-T(z^{n,k})\|+ \|z^{n,k}-T(z^{n,k})\| = \pran{1+\frac{2}{\eta\alpha_{L_1}}}\|z^{n,k}-T(z^{n,k})\|= \frac{1}{\alpha^{L_1}_\eta}\|z^{n,k}-T(z^{n,k})\| \ ,
        \end{aligned}
    \end{equation*}
    }
    where the third inequality uses \eqref{eq:eucildean-sharp-stage-1} at $T(z^{n,k})$ and the last inequality utilizes  $P_\eta(z^{n,k}-T(z^{n,k}))\in \mathcal F(T(z^{n,k}))$ by update rule of PDHG \eqref{eq:pdhg}.
\end{proof}

The next theorem shows that rHPDHG can identify the set $N$ and $B_1$ after a finite number of PDHG iterations.

\begin{thm}[Finite time identification]\label{thm:stage-1}
Consider restarted Halpern PDHG (Algorithm \ref{alg:hpdhg-restart}) for solving \eqref{eq:minmax} with step-size $\eta\leq \frac{1}{2\|A\|_2}$. Let $\{z^{n,k}=(x^{n,k},y^{n,k})\}$ be the iterates of the algorithm and let $z^*$ be the converging optimal solution, i.e., $z^{n,k} \rightarrow z^*$ as $n\rightarrow \infty$. Denote $(N, B_1, B_2)$ the partition of the primal variable indices given by Definition \ref{def:partition}. Denote $\alpha^{L_1}_{\eta}$ the sharpness constant in Proposition \ref{prop:sharp-stage-1}. Then, it holds that
    \begin{enumerate}
        \item[(i)] For outer iteration $n$ such that $\|z^{n,0}-\PDHG(z^{n,0})\|\geq \frac{\alpha_{\eta}^{L_1}\delta}{18\sqrt{2\eta}}$, the restart length $\tau^n$ can be upper bounded as
        \begin{equation*}
            \tau^n\leq \frac{72eR}{\alpha_{\eta}^{L_1}\delta} \ .
        \end{equation*}
        Furthermore, we know that $\max\left\{n: \|z^{n,0}-\PDHG(z^{n,0})\|\geq \frac{\alpha_{\eta}^{L_1}\delta}{18\sqrt{2\eta}} \right\}\leq \log\pran{\frac{72R}{\alpha_{\eta}^{L_1}\delta}}$.
        \item[(ii)] For outer iteration $n$ such that  $\|z^{n,0}-\PDHG(z^{n,0})\|<\frac{\alpha_{\eta}^{L_1}\delta}{18\sqrt{2\eta}}$, it holds for any $k\leq \tau^n$ that
        \begin{equation*}
            \|z^{n,k}-z^*\|_2\leq \frac{\delta}{2}, \ x^{n,k}_{B_1}>0, \ c^{n,k}_N-A_N^\top y^{n,k}>0 \ .
        \end{equation*}
    \end{enumerate}
\end{thm}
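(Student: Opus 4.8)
The plan is to treat the two coordinate conclusions of part (ii) as immediate consequences of the metric bound $\|z^{n,k}-z^*\|_2\le \delta/2$, and to isolate the establishment of that bound as the real work. Indeed, once $\|z^{n,k}-z^*\|_2\le\delta/2$ is known, Definition \ref{def:delta} gives for every $i\in B_1$ that $x^{n,k}_i\ge x^*_i-\|z^{n,k}-z^*\|_2\ge \delta-\delta/2>0$, and for every $i\in N$ that $c_i+A_i^\top y^{n,k}\ge (c_i+A_i^\top y^*)-\|A_i\|_2\|y^{n,k}-y^*\|_2\ge \delta\|A\|_2-\|A\|_2\,\delta/2>0$, using $\|A_i\|_2\le\|A\|_2$. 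So the theorem reduces to (i) controlling the restart lengths and counting the large-residual epochs, and (ii) proving the proximity $\|z^{n,k}-z^*\|_2\le\delta/2$ during small-residual epochs.

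For part (i), I would start from the adaptive restart rule \eqref{eq:adaptive-restart}: for $n\ge 1$ the inner loop stops at the first $k$ with $\|z^{n,k}-T(z^{n,k})\|\le \tfrac1e\|z^{n,0}-T(z^{n,0})\|$. By the first estimate of Lemma \ref{lem:sublinear-feas}, $\|z^{n,k}-T(z^{n,k})\|\le \tfrac{2}{k+1}\mathrm{dist}(z^{n,0},\mathcal Z^*)$, so the restart is guaranteed once $\tfrac{2}{k+1}\mathrm{dist}(z^{n,0},\mathcal Z^*)\le \tfrac1e\|z^{n,0}-T(z^{n,0})\|$. Under the large-residual hypothesis the right-hand side is at least $\tfrac1e\cdot\tfrac{\alpha^{L_1}_\eta\delta}{18\sqrt{2\eta}}$, while $\mathrm{dist}(z^{n,0},\mathcal Z^*)=O(R)$ by the boundedness in Lemma \ref{lem:iterates-feas}(ii); solving for $k$ yields $\tau^n\le \tfrac{72eR}{\alpha^{L_1}_\eta\delta}$ after tracking the constants. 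For the count of large-residual epochs, I would note that the restart residual contracts geometrically: since $z^{n+1,0}=T(z^{n,\tau^n})$, nonexpansiveness of $T$ and the restart rule give $\|z^{n+1,0}-T(z^{n+1,0})\|\le \|z^{n,\tau^n}-T(z^{n,\tau^n})\|\le \tfrac1e\|z^{n,0}-T(z^{n,0})\|$, so the residual falls below the threshold $\tfrac{\alpha^{L_1}_\eta\delta}{18\sqrt{2\eta}}$ after at most $\log\!\big(\tfrac{72R}{\alpha^{L_1}_\eta\delta}\big)$ epochs.

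For part (ii), the first step is to propagate the small-residual hypothesis through the whole inner loop: using that the fixed-point residual does not increase along a Halpern run (a standard property behind Lemma \ref{lem:sublinear-feas}/\cite{lieder2021convergence}), I get $\|z^{n,k}-T(z^{n,k})\|\le \|z^{n,0}-T(z^{n,0})\|<\tfrac{\alpha^{L_1}_\eta\delta}{18\sqrt{2\eta}}$ for every $k\le\tau^n$. Feeding this into the active-cone sharpness of Proposition \ref{prop:sharp-stage-1} gives $\mathrm{dist}(z^{n,k},\mathcal Z_L^*)<\tfrac{\delta}{18\sqrt{2\eta}}$, and converting to the Euclidean norm by Lemma \ref{lem:change-of-norm} yields $\mathrm{dist}_2(z^{n,k},\mathcal Z_L^*)<\delta/18$. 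The remaining and decisive step is to upgrade this proximity-to-the-active-cone into the proximity $\|z^{n,k}-z^*\|_2\le\delta/2$ to the specific converging solution $z^*$; here I would invoke the identification geometry of \cite{lu2023geometry}, combining the bound on $\mathrm{dist}_2(z^{n,k},\mathcal Z_L^*)$ with the monotone convergence $z^{n,k}\to z^*$ from Lemma \ref{lem:iterates-feas}(i),(iii) and the non-degeneracy margin $\delta$.

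I expect this last upgrade to be the main obstacle. The difficulty is that $\mathcal Z_L^*=z^*+\mathcal K$ is an unbounded cone whose defining constraints leave the $B_1$ primal coordinates entirely free, so proximity to $\mathcal Z_L^*$ by itself does not prevent $x_{B_1}$ from collapsing toward zero or the reduced costs on $N$ from vanishing — exactly the failure modes we must rule out. What pins the iterate to $z^*$ rather than to some far-away point of the cone is the trajectory-based non-degeneracy argument of \cite{lu2023geometry}: a sign violation of size on the order of $\delta$ is incompatible with simultaneously having small distance to $\mathcal Z_L^*$ and converging to $z^*$. The reason this transfers to rHPDHG, while it cannot be run for raPDHG, is precisely that Halpern supplies a per-iterate fixed-point-residual bound (Lemma \ref{lem:sublinear-feas}), which the ergodic average of PDHG does not; this is the structural advantage the whole section exploits.
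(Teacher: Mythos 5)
Your part (i) is correct and essentially identical to the paper's argument: bound $\tau^n$ by combining the restart rule \eqref{eq:adaptive-restart}, the sublinear bound of Lemma \ref{lem:sublinear-feas}, the large-residual hypothesis, and boundedness of the iterates, then count the large-residual epochs via the geometric decay of the restart residual. Your reduction of the sign conditions in part (ii) to the metric bound $\|z^{n,k}-z^*\|_2\le\delta/2$ is also sound (the paper delegates exactly this step to \cite[Lemma 4]{lu2023geometry}). However, your part (ii) has two problems. The smaller one: you assert that the fixed-point residual is non-increasing along a Halpern run as ``a standard property.'' It is not; monotonicity of $\|z^k-T(z^k)\|$ is a property of Krasnoselskii--Mann steps, and nothing in \cite{lieder2021convergence} or Lemma \ref{lem:sublinear-feas} supplies it for Halpern iterations, whose anchoring term can push the residual up at individual steps. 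The paper obtains the in-epoch residual bound by a different two-step route: an induction with nonexpansiveness gives $\|z^{n,k}-z^{n,0}\|\le \frac{k}{2}\|z^{n,0}-T(z^{n,0})\|$ (Lemma \ref{lem:stage-1}(ii)), and then the second estimate of Lemma \ref{lem:sublinear-feas}, $\|z^{n,k}-T(z^{n,k})\|\le\frac{2}{k}\|z^{n,k}-z^{n,0}\|$, keeps the residual below the threshold. Your desired conclusion is true, but not for the reason you give.

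The decisive gap is the step you yourself call ``the main obstacle'': upgrading $\mathrm{dist}_2(z^{n,k},\mathcal Z_L^*)<\delta/18$ to $\|z^{n,k}-z^*\|_2\le\delta/2$. You propose to invoke the trajectory-based argument of \cite{lu2023geometry}, but that argument is built for vanilla PDHG, whose iterates move by single PDHG steps with monotone residual; it cannot be cited for rHPDHG, whose inner iterates are anchored Halpern combinations and whose epochs are separated by restart jumps. Supplying this step is precisely the new technical content of the paper, namely Lemma \ref{lem:ball}, and its structure differs from your sketch: (a) the cone coincides with the optimal set near $z^*$, i.e.\ $\mathcal Z_L^*\cap B_{\delta/2}(z^*)\subseteq\mathcal Z^*$, so an iterate close to $z^*$ is forced, via Lemma \ref{lem:iterates-feas}(iv) and its small distance to $\mathcal Z_L^*$, to be \emph{very} close to $z^*$; (b) consecutive Halpern iterates move by at most $\frac{3}{2}\alpha^{L_1}_\eta\delta/C$ (Lemma \ref{lem:stage-1}(iv)), so an iterate far from $z^*$ cannot jump into the ball around $z^*$ --- this is quantified by the projection inequalities \eqref{eq:ball-1}--\eqref{eq:ball-2}; and (c) an induction across epochs with shrinking tolerances $\Delta_t$, absorbing the geometrically decaying restart displacements $\|T(z^{n+t,\tau^{n+t}})-z^{n+t,\tau^{n+t}}\|$, shows that if $\|z^{n,0}-z^*\|_2>\delta/2$ then every later iterate stays bounded away from $z^*$, contradicting $z^{n,0}\to z^*$. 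Without this trap-and-contradiction argument (or an equivalent), the central claim of part (ii) is unproven; identifying the obstacle and pointing to \cite{lu2023geometry} does not close it.
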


\begin{rem}
    Theorem \ref{thm:stage-1} implies that the identification complexity of Algorithm \ref{alg:hpdhg-restart} with step-size $\eta\leq \frac{1}{2\|A\|_2}$ equals $\widetilde O\pran{\frac{\|A\|_2}{\alpha_{L_1}\delta}}$. In contrast, vanilla PDHG requires $\widetilde O\pran{\frac{\|A\|_2^2}{(\alpha_{L_1}\delta)^2}}$ iterations for identification, this showcases that restarted Halpern iteration can achieve faster identification than PDHG.
\end{rem}

In the rest of this section, we will prove Theorem \ref{thm:stage-1}. We set a constant $C= 18$. The value of $C$ is obtained by the method of undetermined coefficients. We utilize $C$ instead of $18$ herein because this can make the proof easier to understand.

We begin with a collection of basic properties, which are summarized in the next lemma.
\begin{lem}\label{lem:stage-1}
    Suppose $\|z^{n,0}-\PDHG(z^{n,0})\|< \alpha^{L_1}_{\eta}\delta/(\sqrt{2\eta}C)$. Then it holds that
    \begin{enumerate}
        \item[(i)] $\mathrm{dist}_2(z^{n,0},\mathcal Z_L^*)< \frac{\delta}{C}$
        \item[(ii)] $\|z^{n,k}-z^{n,0}\|\leq \frac{k}{2}\frac{\alpha^{L_1}_{\eta}\delta}{\sqrt{2\eta}C}$ for any $k\geq 0$
        \item[(iii)] $\|z^{n,k}-\PDHG(z^{n,k})\|\leq \frac{\alpha^{L_1}_{\eta}\delta}{\sqrt{2\eta}C}$ and thus $\mathrm{dist}_2(z^{n,k},\mathcal Z_L^*)< \frac{\delta}{C}$ for any $k\geq 0$
        \item[(iv)] $\|z^{n,k+1}-z^{n,k}\|\leq \frac{3}{2}\frac{\alpha^{L_1}_{\eta}\delta}{\sqrt{2\eta}C}$ and thus $\|z^{n,k+1}-z^{n,k}\|_2\leq \frac{3}{2}\frac{\alpha^{L_1}_{\eta}\delta}{C}$
    \end{enumerate}
\end{lem}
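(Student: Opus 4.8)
The four items form a dependency chain, so the plan is to prove them in the stated order, carrying throughout the standing hypothesis $\|z^{n,0}-T(z^{n,0})\|<\epsilon$ where I abbreviate $\epsilon:=\alpha_\eta^{L_1}\delta/(\sqrt{2\eta}C)$. The only external inputs are the sharpness estimate of Proposition \ref{prop:sharp-stage-1}, the sublinear residual decay of Lemma \ref{lem:sublinear-feas}, and the norm equivalence of Lemma \ref{lem:change-of-norm}, which I invoke only in the easy direction $\|w\|_2\leq\sqrt{2\eta}\|w\|$. For part (i), I would apply Proposition \ref{prop:sharp-stage-1} at $k=0$; this is legitimate because $\|z^{n,0}\|_2\leq R$ holds by Lemma \ref{lem:iterates-feas}(ii) once the generic optimal point there is taken to be the converging solution $z^*$. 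Sharpness gives $\mathrm{dist}(z^{n,0},\mathcal Z_L^*)\leq\epsilon/\alpha_\eta^{L_1}=\delta/(\sqrt{2\eta}C)$, and converting via $\mathrm{dist}_2\leq\sqrt{2\eta}\,\mathrm{dist}$ yields $\mathrm{dist}_2(z^{n,0},\mathcal Z_L^*)<\delta/C$.

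Part (ii) is the crux. I would unfold the Halpern update into $z^{n,k+1}-z^{n,0}=\tfrac{k+1}{k+2}\bigl(T(z^{n,k})-z^{n,0}\bigr)$, then bound $\|T(z^{n,k})-z^{n,0}\|\leq\|T(z^{n,k})-T(z^{n,0})\|+\|T(z^{n,0})-z^{n,0}\|\leq\|z^{n,k}-z^{n,0}\|+\epsilon$, using nonexpansiveness of $T$ for the first term and the hypothesis for the second. Induction then closes cleanly: from $\|z^{n,k}-z^{n,0}\|\leq\tfrac{k}{2}\epsilon$ one gets $\|z^{n,k+1}-z^{n,0}\|\leq\tfrac{k+1}{k+2}\bigl(\tfrac{k}{2}\epsilon+\epsilon\bigr)=\tfrac{k+1}{2}\epsilon$, with the base case $k=0$ trivial.

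For part (iii), I would feed (ii) into the second estimate of Lemma \ref{lem:sublinear-feas}: $\|z^{n,k}-T(z^{n,k})\|\leq\tfrac{2}{k}\|z^{n,k}-z^{n,0}\|\leq\epsilon$ for $k\geq1$, while $k=0$ is the hypothesis itself, giving the uniform residual bound. The distance bound then repeats the part (i) argument verbatim at general $k$ (again $\|z^{n,k}\|_2\leq R$ by Lemma \ref{lem:iterates-feas}(ii)), producing $\mathrm{dist}_2(z^{n,k},\mathcal Z_L^*)<\delta/C$. For part (iv) I would decompose $z^{n,k+1}-z^{n,k}=\tfrac{k+1}{k+2}\bigl(T(z^{n,k})-z^{n,k}\bigr)+\tfrac{1}{k+2}\bigl(z^{n,0}-z^{n,k}\bigr)$, bound the two pieces by $\epsilon$ (from (iii)) and $\tfrac{k}{2}\epsilon$ (from (ii)), and verify the coefficient $\tfrac{k+1}{k+2}+\tfrac{k}{2(k+2)}=\tfrac{3k+2}{2k+4}\leq\tfrac32$; the $\ell_2$ statement follows by one more application of $\|\cdot\|_2\leq\sqrt{2\eta}\|\cdot\|$.

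The main obstacle is the coupling between (ii) and (iii). The point is to control the drift $\|z^{n,k}-z^{n,0}\|$ by a bound that is \emph{exactly} linear in $k$ with constant $\tfrac12$, so that the $\tfrac{2}{k}$ factor from Lemma \ref{lem:sublinear-feas} cancels the growth and delivers a residual bound that is \emph{uniform} in $k$ rather than growing; any looser constant in (ii) would let the residual bound drift upward with $k$ and destroy the subsequent identification argument. Everything else is routine constant-tracking and the repeated use of sharpness plus the norm conversion.
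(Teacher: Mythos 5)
Your proposal is correct and follows essentially the same route as the paper's own proof: sharpness (Proposition \ref{prop:sharp-stage-1}) plus the norm conversion for (i), induction with nonexpansiveness of $T$ for (ii), the $\tfrac{2}{k}$ bound of Lemma \ref{lem:sublinear-feas} fed by (ii) for (iii), and the Halpern-step decomposition with the $\tfrac{3k+2}{2k+4}\leq\tfrac32$ coefficient check for (iv). If anything, you are slightly more careful than the paper in justifying the boundedness hypothesis $\|z^{n,k}\|_2\leq R$ needed for Proposition \ref{prop:sharp-stage-1} and in handling the $k=0$ case of (iii) separately.
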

\begin{proof}
    (i) Note that 
    \begin{equation*}
    \begin{aligned}
        \mathrm{dist}_2(z^{n,0},\mathcal Z_L^*) & \leq \sqrt{2\eta}\mathrm{dist}(z^{n,0},\mathcal Z_L^*) \leq \frac{\sqrt {2\eta}}{\alpha^{L_1}_{\eta}}\|z^{n,0}-\PDHG(z^{n,0})\|\leq \frac{\delta}{C}
    \end{aligned}
    \end{equation*}
    where the first inequality is from Lemma \ref{lem:change-of-norm} and the second inequality follows Proposition \ref{prop:sharp-stage-1}.

    (ii) We prove by induction. Suppose for some $k$, $\|z^{n,k-1}-z^{n,0}\|\leq \frac{k-1}{2}\frac{\alpha^{L_1}_{\eta}\delta}{\sqrt{2\eta}C}$. Then it holds that
    \begin{equation*}
        \begin{aligned}
            \|z^{n,k}-z^{n,0}\| & =\left\| \frac{k}{k+1}\PDHG(z^{n,k-1})+\frac{1}{k+1}z^{n,0}-z^{n,0} \right\|= \frac{k}{k+1}\left\|\PDHG(z^{n,k-1})-z^{n,0} \right\|\\
            & \leq \frac{k}{k+1}\left\|\PDHG(z^{n,k-1})-\PDHG(z^{n,0}) \right\| + \frac{k}{k+1}\left\|\PDHG(z^{n,0})-z^{n,0} \right\|\\
            & \leq \frac{k}{k+1}\left\|z^{n,k-1}-z^{n,0} \right\| + \frac{k}{k+1}\left\|\PDHG(z^{n,0})-z^{n,0} \right\|\\
            & \leq \frac{k}{k+1}\frac{k-1}{2}\frac{\alpha^{L_1}_{\eta}\delta}{\sqrt{2\eta}C} + \frac{k}{k+1}\frac{\alpha^{L_1}_{\eta}\delta}{\sqrt{2\eta}C}=\frac{k}{2}\frac{\alpha^{L_1}_{\eta}\delta}{\sqrt{2\eta}C}
        \end{aligned}
    \end{equation*}
    where the first equality is from the update rule of Halpern PDHG \eqref{eq:hpdhg}, the first inequality utilizes triangle inequality and the second and third inequalities uses non-expansiveness of PDHG operator $T$ and induction assumption, respectively. Thus we conclude the proof of this part by induction.
    
    (iii) Due to Lemma \ref{lem:sublinear-feas}, we have
    \begin{equation*}
        \|z^{n,k}-\PDHG(z^{n,k})\|\leq \frac{2}{k}\left\|z^{n,k}-z^{n,0}\right\|\leq \frac{\alpha^{L_1}_{\eta}\delta}{\sqrt{2\eta}C} \ .
    \end{equation*}
    Hence we know $\mathrm{dist}_2(z^{n,k},\mathcal Z_L^*)< \frac{\delta}{C}$ by part (i).
    
    (iv) It holds that
    \begin{equation*}
        \begin{aligned}
            \|z^{n,k+1}-z^{n,k}\| & = \left\| \frac{k+1}{k+2}\PDHG(z^{n,k})+\frac{1}{k+2}z^{n,0}-z^{n,k} \right\|\\
            & \leq \frac{k+1}{k+2}\left\| \PDHG(z^{n,k})-z^{n,k}\right\|+\frac{1}{k+2}\left\|z^{n,0}-z^{n,k} \right\|\\
            & \leq \frac{k+1}{k+2}\frac{\alpha^{L_1}_{\eta}\delta}{\sqrt{2\eta}C}+\frac{1}{k+2}\frac{k}{2}\frac{\alpha^{L_1}_{\eta}\delta}{\sqrt{2\eta}C}=\frac{3k+2}{2k+4}\frac{\alpha^{L_1}_{\eta}\delta}{\sqrt{2\eta}C}\leq \frac{3}{2}\frac{\alpha^{L_1}_{\eta}\delta}{\sqrt{2\eta}C} \ ,
        \end{aligned}
    \end{equation*}
    where the second inequality uses part (iii) and (ii).    
    Furthermore, since $\|z\|_2\leq \sqrt{2\eta}\|z\|$ for any $z$ with step-size $\eta\leq \frac{1}{2\|A\|_2}$,
    \begin{equation*}
        \|z^{n,k+1}-z^{n,k}\|_2\leq \sqrt{2\eta}\|z^{n,k+1}-z^{n,k}\|\leq \frac{3}{2}\frac{\alpha^{L_1}_{\eta}\delta}{C} \ .
    \end{equation*}
\end{proof}

The next lemma shows that if the fixed point residual is small, then all the subsequent iterates will be close to the converging optimal solution.
\begin{lem}\label{lem:ball}
    Suppose $\|z^{n,0}-\PDHG(z^{n,0})\|< \alpha_\eta^{L_1}\delta/(\sqrt{2\eta}C)$ with $C= 18$. Then it holds for any $n'\geq n$ that
    \begin{equation*}
        \|z^{n',0}-z^*\|_2\leq \frac{\delta}{2} .
    \end{equation*}
\end{lem}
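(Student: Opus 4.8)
The plan is to show that once the fixed-point residual has become small at the restart point $z^{n,0}$, \emph{every} subsequent iterate produced by the algorithm is trapped in a small $\ell_2$-ball around the converging solution $z^*$. First I would record that the hypothesis propagates forward: since $T$ is nonexpansive in $\|\cdot\|$ and each restart sets $z^{n'+1,0}=T(z^{n',\tau^{n'}})$, the residual at $z^{n'+1,0}$ is bounded by $\|z^{n',\tau^{n'}}-T(z^{n',\tau^{n'}})\|$, which part~(iii) of Lemma~\ref{lem:stage-1} keeps below the threshold $\alpha_\eta^{L_1}\delta/(\sqrt{2\eta}C)$; inductively, the residual stays below the threshold for every $n'\ge n$. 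Thus Lemma~\ref{lem:stage-1} applies at each such outer loop, yielding $\mathrm{dist}_2(z^{n',0},\mathcal Z_L^*)<\delta/C$ together with the per-step movement bound of part~(iv). It then suffices to prove that the whole trajectory emanating from $z^{n,0}$ stays within $\ell_2$-distance $\delta/2$ of $z^*$.

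The heart of the argument is a local coincidence between the relaxed set $\mathcal Z_L^*$ and the true optimal set $\mathcal Z^*$. I would prove: for any iterate $w=(w_x,w_y)$ with $\mathrm{dist}_2(w,\mathcal Z_L^*)<\delta/C$ and $\|w-z^*\|_2\le\delta/2$, the Euclidean projection $\hat w=(\hat x,\hat y)$ of $w$ onto $\mathcal Z_L^*$ actually lies in $\mathcal Z^*$. Indeed $\|\hat w-z^*\|_2\le\|\hat w-w\|_2+\|w-z^*\|_2<\delta/C+\delta/2<\delta$, so by Definition~\ref{def:delta} the coordinates inherited from $z^*$ keep their margins: $\hat x_i>x_i^*-\delta\ge 0$ for $i\in B_1$, and $c_j+A_j^\top\hat y>(c_j+A_j^\top y^*)-\|A\|_2\delta\ge 0$ for $j\in N$. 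Together with the constraints already enforced by $\mathcal Z_L^*$ (namely $A\hat x=b$, $\hat x_{N\cup B_2}\ge 0$, dual feasibility on $B$, and the gap inequality), this makes $\hat w$ primal-dual feasible with nonpositive duality gap, hence optimal by weak duality. Consequently $\mathrm{dist}_2(w,\mathcal Z^*)=\mathrm{dist}_2(w,\mathcal Z_L^*)<\delta/C$, and Lemma~\ref{lem:iterates-feas}(iv) (after converting norms via Lemma~\ref{lem:change-of-norm}) upgrades this to $\|w-z^*\|_2$ being a fixed fraction of $\delta$ strictly below $\delta/2$, of order $\delta/9$ when $C=18$. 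In other words, no iterate obeying the residual bound can sit in the ``forbidden shell'' $\{w:\delta/9<\|w-z^*\|_2\le\delta/2\}$.

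To turn this into the conclusion I would run a slow-movement continuity argument that breaks the apparent circularity (the coincidence above presumes the $\delta/2$-closeness one is trying to prove). By part~(iv) of Lemma~\ref{lem:stage-1} each Halpern step moves the iterate by at most $\tfrac32\alpha_\eta^{L_1}\delta/C$ in $\ell_2$, and each restart step moves it by $\|T(z^{n',\tau^{n'}})-z^{n',\tau^{n'}}\|_2\le\alpha_\eta^{L_1}\delta/C$; since $\alpha_\eta^{L_1}<1$, both are smaller than $\delta/12$, which is strictly less than the shell width $\delta/2-\delta/9$. Because the trajectory converges to $z^*$ (Lemma~\ref{lem:iterates-feas}(iii)), it eventually enters the inner ball of radius $\delta/9$. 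If some iterate of the trajectory---in particular $z^{n,0}$---were outside the ball of radius $\delta/2$, then moving from it to the tail inside the $\delta/9$-ball would require a single step leaping across the whole shell, contradicting the step-length bound. Hence every iterate from $z^{n,0}$ on lies in the $\delta/9$-ball, and specializing to the restart points gives $\|z^{n',0}-z^*\|_2\le\delta/2$ for all $n'\ge n$.

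The main obstacle is precisely this circularity: the geometric coincidence of $\mathcal Z_L^*$ and $\mathcal Z^*$ is only available inside the $\delta$-neighborhood of $z^*$, yet that neighborhood bound is the very conclusion. The slow-movement argument, anchored by the convergence $z^{n',0}\to z^*$, resolves it, since no single step is long enough to jump over the forbidden shell, so the trajectory can never have been outside it. A secondary but pervasive nuisance is the bookkeeping between the canonical PDHG norm $\|\cdot\|$ (in which $T$ is nonexpansive and Lemma~\ref{lem:iterates-feas}(iv) is stated) and the $\ell_2$ norm (in which the coordinatewise $\delta$-margins are natural); I would route every conversion through Lemma~\ref{lem:change-of-norm}, and the specific value $C=18$ is exactly what leaves enough slack for these conversions and for the factor $2$ in Lemma~\ref{lem:iterates-feas}(iv) to close the estimate.
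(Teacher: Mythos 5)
Your proof is correct, and while it rests on the same pillars as the paper's --- the local inclusion $\mathcal Z_L^*\cap B_{\delta}(z^*)\subseteq \mathcal Z^*$, the closest-point property in part (iv) of Lemma \ref{lem:iterates-feas}, the short-step bounds in Lemma \ref{lem:stage-1}, and a contradiction with the convergence $z^{n',0}\to z^*$ --- its global organization is genuinely different from the paper's. The paper proves a parameterized exclusion claim (for any $\Delta$ with $C>14\Delta$, inner iterates lying outside the ball of radius $\delta/(2\Delta)$ can never re-enter it within an epoch, established through the projection-geometry estimates \eqref{eq:ball-1} and \eqref{eq:ball-2}), and then runs an induction over epochs in which each restart step shrinks the exclusion radius by $\delta/(e^{t+1}C)$; the sequence $\Delta_t$ and its limit $\widetilde\Delta$ are precisely the bookkeeping needed to certify that these radii stay bounded away from zero, which is what contradicts convergence. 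You collapse this two-level structure into a single ``forbidden shell'' fact --- any iterate obeying the residual bound that lies within $\delta/2$ of $z^*$ is in fact within $4\delta/C=2\delta/9$ of it (your ``$\delta/9$'' is a harmless constant slip) --- plus a first-crossing argument: every step, Halpern or restart, moves the iterate by at most $\delta/12<\delta/2-2\delta/9$, so a trajectory that is ever outside the $\delta/2$-ball could never enter it, contradicting convergence. This treats Halpern and restart steps uniformly, dispenses with the $\Delta_t$ machinery and the constraint $C>14\widetilde\Delta$ (you only need the shell width to exceed the step length, which makes the role of $C=18$ transparent), and it makes explicit, via the margin and weak-duality computation, the local identification of $\mathcal Z_L^*$ with $\mathcal Z^*$ that the paper invokes implicitly. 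Two details to tighten in a write-up: part (iii) of Lemma \ref{lem:stage-1} gives only a non-strict bound, so to propagate the strict hypothesis across restarts you should invoke the restart condition \eqref{eq:adaptive-restart}, which supplies an extra factor of $1/e$; and the inner shell radius should be stated as $4\delta/C$ rather than $\delta/9$, which still leaves ample slack since $\delta/2-4\delta/C=5\delta/18>\delta/12$.
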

\begin{proof}
    First we prove the following claim. For any $\Delta>1$ and $C>14\Delta$, suppose $\|z^{n,0}-\PDHG(z^{n,0})\|< \alpha_{L_1}\delta/(\sqrt{2\eta}C)$ but $\|z^{n,k}-z^*\|_2>\frac{\delta}{2\Delta}$ for some $k\geq 0$. Then the following two properties hold:
    \begin{enumerate}
        \item[(i)] $\left\|P_{\mathcal Z_L^*}(z^{n,k})-z^*\right\|_2>\frac{\delta}{2\Delta}$.
        \item[(ii)] For any $k'\geq k$, $\|z^{n,k'}-z^*\|_2>\frac{\delta}{2\Delta}$.
    \end{enumerate}
    (i) We prove by contradiction. If otherwise, i.e., $\left\|P_{\mathcal Z_L^*}(z^{n,k})-z^*\right\|_2\leq \frac{\delta}{2\Delta}$, then $P_{\mathcal Z_L^*}(z^{n,k})\in \mathcal Z_L^*\cap B_{\frac{\delta}{2}}(z^*) \subseteq \mathcal Z^*$. Note that $C> 14\Delta > 8\Delta$ and we have
    \begin{equation*}
        \begin{aligned}
            \|z^{n,k}-z^*\|_2 \leq \sqrt{2\eta}\|z^{n,k}-z^*\| \leq 2\sqrt{2\eta}\mathrm{dist}(z^{n,k},\mathcal Z^*)= 2\sqrt{2\eta}\mathrm{dist}(z^{n,k},\mathcal Z_L^*)\leq 4\mathrm{dist}_2(z^{n,k},\mathcal Z_L^*)\leq \frac{4\delta}{C}<\frac{\delta}{2\Delta} \ ,
        \end{aligned}
    \end{equation*}
    where the first inequality uses Lemma \ref{lem:change-of-norm}, the second one is due to part (iv) of Lemma \ref{lem:iterates-feas} and the fourth inequality follows from part (iii) of Lemma \ref{lem:stage-1}. This leads to contradiction with condition $\|z^{n,k}-z^*\|_2>\frac{\delta}{2\Delta}$. Thus it holds that $\left\|P_{\mathcal Z_L^*}(z^{n,k})-z^*\right\|_2> \frac{\delta}{2\Delta}$.
    
    (ii) We prove by contradiction. Suppose there exists a $k_0\geq k$ such that $\|z^{n,k_0}-z^*\|_2>\frac{\delta}{2\Delta}$ and $\|z^{n,k_0+1}-z^*\|_2\leq \frac{\delta}{2\Delta}$. Then we know {$\left\|P_{\mathcal Z_L^*}(z^{n,k_0})-z^*\right\|_2>\frac{\delta}{2\Delta}$}, and $\left\|P_{\mathcal Z_L^*}(z^{n,k_0+1})-z^*\right\|_2\leq \frac{\delta}{2\Delta}$ which implies $P_{\mathcal Z_L^*}(z^{n,k_0+1})\in \mathcal Z^*$, where the first inequality follows from part (i) and the second inequality is the consequence of non-expansiveness of projection operator.

    By triangle inequality, it holds that
    \begin{equation}\label{eq:ball-1}
        \begin{aligned}
            \|z^{n,k_0}-z^{n,k_0+1}\|_2 & \geq \|z^{n,k_0}-P_{\mathcal Z_L^*}(z^{n,k_0+1})\|_2-\|P_{\mathcal Z_L^*}(z^{n,k_0+1})-z^{n,k_0+1}\|_2 \\
            & \geq \|z^{n,k_0}-P_{\mathcal Z_L^*}(z^{n,k_0+1})\|_2 - \frac{\delta}{C}\\
            & \geq \|z^{n,k_0}-z^*\|_2-\|z^*-P_{\mathcal Z_L^*}(z^{n,k_0+1})\|_2-\frac{\delta}{C}\\
            & \geq \|z^{n,k_0}-z^*\|_2-\frac{\delta}{2\Delta}-\frac{\delta}{C} \ ,
        \end{aligned}
    \end{equation}
    where the second inequality uses $\|P_{\mathcal Z_L^*}(z^{n,k_0+1})-z^{n,k_0+1}\|_2=\mathrm{dist}_2(z^{n,k_0+1},\mathcal Z_L^*)\leq \frac{\delta}{C}$ by part (iii) of Lemma \ref{lem:stage-1} while the last inequality follows from the choice of $k_0$ such that $\|P_{\mathcal Z_L^*}(z^{n,k_0+1})-z^*\|_2\leq \frac{\delta}{2\Delta}$.
    
    Let $\tilde z=z^*+\frac{\delta}{2\Delta\|P_{\mathcal Z_L^*}(z^{n,k_0})-z^*\|_2}\pran{P_{\mathcal Z_L^*}(z^{n,k_0})-z^*}$ and then $\|\tilde z-z^*\|_2=\frac{\delta}{2\Delta}$ which implies $\tilde z\in\mathcal Z^*$. Note that
    \begin{equation}\label{eq:ball-2}
        \begin{aligned}
            \|z^{n,k_0}-z^*\|_2^2 &\geq \left\|z^{n,k_0}-P_{\mathcal Z_L^*}(z^{n,k_0})\right\|_2^2+\left\|P_{\mathcal Z_L^*}(z^{n,k_0})-z^*\right\|_2^2\geq \left\|P_{\mathcal Z_L^*}(z^{n,k_0})-z^*\right\|_2^2\\
            &=\pran{\left\|P_{\mathcal Z_L^*}(z^{n,k_0})-\tilde z\right\|_2+\left\|\tilde z-z^*\right\|_2}^2=\pran{\left\|P_{\mathcal Z_L^*}(z^{n,k_0})-\tilde z\right\|_2+\frac{\delta}{2\Delta}}^2\\
            & \geq \pran{\left\|z^{n,k_0}-\tilde z\right\|_2-\left\|z^{n,k_0}-P_{\mathcal Z_L^*}(z^{n,k_0})\right\|_2+\frac{\delta}{2\Delta}}^2\geq \pran{\frac{\delta}{4\Delta}-\frac{\delta}{C}+\frac{\delta}{2\Delta}}^2 \ ,
        \end{aligned}
    \end{equation}
    where the first inequality utilizes the fact that $P_{\mathcal Z_L^*}(z^{n,k_0})$ is a projection onto a convex set $\mathcal Z_L^*$, the first equality is due to the definition of $\tilde z$, and the last inequality follows from $|P_{\mathcal Z_L^*}(z^{n,k_0})-z^{n,k_0}\|_2=\mathrm{dist}_2(z^{n,k_0},\mathcal Z_L^*)\leq \frac{\delta}{C}$ and $\left\|z^{n,k_0}-\tilde z\right\|_2 \geq \frac{1}{\sqrt{2\eta}}\left\|z^{n,k_0}-\tilde z\right\| \geq \frac{1}{\sqrt{2\eta}}\left\|z^*-\tilde z\right\|\geq \frac{1}{2}\|z^*-\tilde z\|_2=\frac{\delta}{4\Delta}$ by part (iv) of Lemma \ref{lem:iterates-feas}.

    Combine \eqref{eq:ball-1} and \eqref{eq:ball-2}, and by noticing $C>14\Delta$ and $\alpha^{L_1}_\eta\leq1$ we achieve at
    \begin{equation*}
        \|z^{n,k_0}-z^{n,k_0+1}\|_2 \geq \|z^{n,k_0}-z^*\|_2-\frac{\delta}{2\Delta}-\frac{\delta}{C} \geq \pran{\frac{\delta}{4\Delta}-\frac{\delta}{C}+\frac{\delta}{2\Delta}}-\frac{\delta}{2\Delta}-\frac{\delta}{C}=\frac{\delta}{4\Delta}-\frac{2\delta}{C}>\frac{3\delta}{2C}\geq \frac{3\alpha^{L_1}_{\eta}\delta}{2C}\ .
    \end{equation*}
    This contradicts with part (iv) of Lemma \ref{lem:stage-1} and leads to $\|z^{n,k'}-z^*\|_2>\frac{\delta}{2\Delta}$ for any $k'\geq k$.
    
    Now we prove the lemma by induction and contradiction. Suppose $\|z^{n,0}-z^*\|_2>\frac{\delta}{2}$. Denote $\Delta_t=\frac{1}{1-\frac{2}{eC}-\cdots-\frac{2}{e^tC}}$ and $\widetilde \Delta = \frac{1}{1-\frac{2}{C}\frac{1/e}{1-1/e}}>\frac{1}{1-\sum_{i=0}^t\frac{2}{Ce^i}}=\Delta_t$ for any $t\geq 0$. Note that $C>14\widetilde\Delta>14\widetilde\Delta_t$ for $C\geq 18$. First by \eqref{eq:adaptive-restart} we have 
    \begin{equation*}
        \|T(z^{n+t,\tau^{n+t}})-z^{{n+t},\tau^{n+t}}\|_2\leq \sqrt{2\eta}\|T(z^{n+t,\tau^{n+t}})-z^{n+t,\tau^{n+t}}\|\leq \sqrt{2\eta}\frac
    1{e^{t+1}}\|T(z^{n,0})-z^{n,0}\|\leq \frac{\alpha_\eta^{L_1}\delta}{e^{t+1}C}<\frac{\delta}{e^{t+1}C} \ ,
    \end{equation*}
    where the first inequality is due to Lemma \ref{lem:change-of-norm} and the third inequality follows from the condition $\|z^{n,0}-\PDHG(z^{n,0})\|< \alpha_\eta^{L_1}\delta/(\sqrt{2\eta}C)$. Hence,
    \begin{equation*}
    \begin{aligned}
        \|z^{n+1,0}-z^*\|_2=\|T(z^{n,\tau^n})-z^*\|_2\geq \|z^{n,\tau^n}-z^*\|_2-\|T(z^{n,\tau^n})-z^{n,\tau^n}\|_2 > \frac{\delta}{2}-\frac{\delta}{eC}=\frac{\delta}{2\Delta_1} \ .
    \end{aligned}    
    \end{equation*}
    Thus by part (ii) we know for any $k\geq 0$, $\|z^{n+1,k}-z^*\|_2\geq \frac{\delta}{2\Delta_1}$. Suppose for some $t\geq 0$, $\|z^{n+t,0}-z^*\|_2\geq \frac{\delta}{2\Delta_t}$. Since $C>14\widetilde\Delta>14\widetilde\Delta_{t}$, by part (ii) we have that for any $k\geq 0$, $\|z^{n+t,k}-z^*\|_2\geq \frac{\delta}{2\Delta_{t}}$. Then
    {\small
    \begin{equation*}
    \begin{aligned}
        \|z^{n+t+1,0}-z^*\|_2=\|T(z^{n+t,\tau^{n+t}})-z^*\|_2\geq \|z^{n+t,\tau^{n+t}}-z^*\|_2-\|T(z^{n+t,\tau^{n+t}})-z^{n+t,\tau^{n+t}}\|_2 > \frac{\delta}{2\Delta_t}-\frac{\delta}{e^{t+1}C}=\frac{\delta}{2\Delta_{t+1}} \ .
    \end{aligned}    
    \end{equation*}
    }
    Thus it holds for any $t\geq 0$ that 
    {
    \begin{equation*}
    \begin{aligned}
        \|z^{n+t,0}-z^*\|_2\geq \frac{\delta}{2\Delta_{t}}>\frac{\delta}{2\widetilde\Delta}>0 \ ,
    \end{aligned}    
    \end{equation*}
    }
    which contradicts $z^*=\lim_{t\rightarrow \infty}z^{n+t,0}$ and thus we can conclude that $\|z^{n,0}-z^*\|_2\leq\frac{\delta}{2}$.
\end{proof}

Now, we are ready to prove Theorem \ref{thm:stage-1}.

\begin{proof}[Proof of Theorem \ref{thm:stage-1}]
    Recall the constant $C= 18$.

    (i) Since $\|z^{n,0}-\PDHG(z^{n,0})\|\geq \alpha^{L_1}_{\eta}\delta/(\sqrt{2\eta}C)$ and $\mathrm{dist}(z^{n,0},\mathcal Z^*)\leq \sqrt{\frac 2\eta}R$ by Lemma \ref{lem:change-of-norm}, we have
        \begin{equation*}
            \alpha_n:=\frac{\|z^{n,0}-\PDHG(z^{n,0})\|}{\mathrm{dist}(z^{n,0},\mathcal Z^*)} \geq \frac{\alpha^{L_1}_{\eta}\delta}{2CR} \ .
        \end{equation*}
        Thus, the restart frequency
        \begin{equation*}
            \tau^n\leq \frac{2e}{\alpha_n}\leq 
            \frac{4eCR}{\alpha^{L_1}_{\eta}\delta}=\frac{72eR}{\alpha^{L_1}_{\eta}\delta}
        \end{equation*}
    Furthermore, note that $\|z^{n,0}-\PDHG(z^{n,0})\|\leq \pran{\frac{1}{e}}^n\|z^{0,0}-\PDHG(z^{0,0})\|\leq \pran{\frac{1}{e}}^n 2\mathrm{dist}(z^{0,0}\mathcal Z^*)\leq \frac{2\sqrt{\frac2\eta}R}{e^n}$ where the last inequality uses Lemma \ref{lem:change-of-norm}, and thus 
    \begin{equation*}
        \max\left\{n: \|z^{n,0}-\PDHG(z^{n,0})\|\geq \frac{\alpha_{\eta}^{L_1}\delta}{18\sqrt{2\eta}} \right\}=\max\left\{n: \|z^{n,0}-\PDHG(z^{n,0})\|\geq \frac{\alpha_{L_1}\delta}{\sqrt{2\eta}C} \right\}\leq \log\pran{\frac{4RC}{\alpha^{L_1}_{\eta}\delta}}=\log\pran{\frac{72R}{\alpha^{L_1}_{\eta}\delta}} \ .
    \end{equation*}

    (ii) From Lemma \ref{lem:ball}, we know that
    $\|z^{n,0}-z^*\|_2\leq \frac{\delta}{2}$ and for any $k\leq \tau^n$, $\|z^{n,k}-z^*\|_2\leq \frac{\delta}{2}$. The identification $x^{n,k}_{B_1}>0, \ c^{n,k}_N+A_N^\top y^{n,k}>0$ is thus guaranteed from \cite[Lemma 4]{lu2023geometry}.
\end{proof}

\subsection{Stage II: local linear convergence}
In previous section, it has been demonstrated that after a specific number of iterations, the iterates generated by Algorithm \ref{alg:hpdhg-restart} maintain proximity to an optimal solution, with the ability to identify the non-degenerate coordinates set denoted as $N$ and $B_1$. This section aims to investigate the local dynamics after identification. The local linear convergence rate is characterized through a local sharpness constant denoted as $\alpha_{L_2}$ with respect to a homogeneous linear inequality system. Notably, the local convergence rate is much faster than the global linear convergence rate as derived in~\cite{applegate2023faster}, where the latter is characterized by the global Hoffman constant associated with the KKT system of the LP.

Denote $R_2=\delta$. We consider the following homogeneous linear inequality system 
\begin{equation}\label{eq:local-cone}
    -A_B^\top v\leq 0,Au=0, u_{N\cup B_2}\geq 0, \frac 1{R_2}(c^\top u+b^\top v)\leq 0 \ ,
\end{equation}
and denote $\mathcal K:=\left\{(u,v)\;|\;-A_B^\top v\leq 0,Au=0, u_{N\cup B_2}\geq 0, \tfrac{1}{R_2}(c^\top u+b^\top v)\leq 0\right\}$.
Denote $\alpha_{L_2}$ the sharpness constant to \eqref{eq:local-cone}, i.e., for any $(u,v)\in\mathbb R^{n+m}$,
\begin{equation}\label{eq:def_alphaL2}
    \alpha_{L_2}\mathrm{dist}_2((u,v),\mathcal K)\leq \left\|\begin{pmatrix}
        [-A_B^\top v]^+ \\ Au \\ [-u_{N\cup B_2}]^+ \\ \tfrac{1}{R_2}[c^\top u+b^\top v]^+ \end{pmatrix}\right\|_2   \ .
\end{equation}

Following the same proof as Proposition \ref{prop:sharp-stage-1}, the following property holds.
\begin{prop}\label{prop:sharp-stage-2}
Denote $\alpha^{L_2}_\eta = \frac{s\alpha_{L_2}}{s\alpha_{L_2}+2}$. Then for any outer iteration $n$ and inner iteration $k$ such that $z^{n,k}\in B_{\delta}(z^*)$, it holds that
\begin{equation*}
    \alpha^{L_2}_\eta\mathrm{dist}(z^{n,k},\mathcal Z^*) \leq \|z^{n,k}-\PDHG(z^{n,k})\| \ .
\end{equation*}
\end{prop}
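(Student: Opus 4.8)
The plan is to transcribe the proof of Proposition~\ref{prop:sharp-stage-1} almost verbatim, replacing the global-radius system~\eqref{eq:active-cone} and the ball $\{z:\|z\|_2\le R\}$ by the local system~\eqref{eq:local-cone} with $R_2=\delta$ and the ball $B_\delta(z^*)$. The first step is to record the Euclidean counterpart of the claim, playing the role of~\eqref{eq:eucildean-sharp-stage-1}: for every $z\in B_\delta(z^*)$,
\begin{equation*}
    \alpha_{L_2}\,\mathrm{dist}_2(z,\mathcal Z^*)\le \mathrm{dist}_2(0,\mathcal F(z)) \ ,
\end{equation*}
where $\mathcal F$ is the subdifferential to~\eqref{eq:minmax}. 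I would obtain this from \cite[Lemma 3]{lu2023geometry} applied to~\eqref{eq:local-cone}, together with the observation that on $B_\delta(z^*)$ the shifted feasible cone $z^*+\mathcal K$ coincides with $\mathcal Z^*$. This coincidence is where the Stage~I identification enters: by the definition of $\delta$ we have $x_i^*\ge\delta$ on $B_1$ and $c_i+A_i^\top y^*\ge\delta\|A\|_2$ on $N$, so the constraints $x_{B_1}\ge 0$ and $-A_N^\top y\le c_N$ that are dropped in~\eqref{eq:local-cone} are strictly satisfied throughout $B_\delta(z^*)$, making the relaxation tight there and leaving the sharp local constant $\alpha_{L_2}$ rather than the conservative global one.

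Next I would verify that the Euclidean bound may be invoked not only at $z^{n,k}$ but at $T(z^{n,k})$, since the PDHG identity $P_\eta(z^{n,k}-T(z^{n,k}))\in\mathcal F(T(z^{n,k}))$ forces the subdifferential to be evaluated at the post-step point. Because $z^*\in\mathrm{Fix}(T)$ and $T$ is nonexpansive in $\|\cdot\|$, we have $\|T(z^{n,k})-z^*\|\le\|z^{n,k}-z^*\|$, and converting with Lemma~\ref{lem:change-of-norm} keeps $T(z^{n,k})$ inside the ball on which the local sharpness holds. The delicate point is that the two norms are equivalent only up to a constant factor, so one conversion round-trip inflates the radius; I would absorb this by using the sharper Stage~I bound $\|z^{n,k}-z^*\|_2\le\frac\delta2$ from Theorem~\ref{thm:stage-1}(ii), which leaves exactly enough room for $T(z^{n,k})$ to remain in $B_\delta(z^*)$ after a nonexpansive step. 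This is the only structural deviation from Proposition~\ref{prop:sharp-stage-1}, where membership in the fixed ball $\{\|z\|_2\le R\}$ followed immediately from boundedness.

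With these two ingredients the remaining computation is identical to Proposition~\ref{prop:sharp-stage-1}. I would start from
\begin{equation*}
    \mathrm{dist}(z^{n,k},\mathcal Z^*)\le \mathrm{dist}(T(z^{n,k}),\mathcal Z^*)+\|z^{n,k}-T(z^{n,k})\|
\end{equation*}
by the triangle inequality, pass to the $\ell_2$ distance via Lemma~\ref{lem:change-of-norm}, apply the Euclidean local sharpness at $T(z^{n,k})$, convert $\mathrm{dist}_2(0,\mathcal F(T(z^{n,k})))$ into the dual norm $\mathrm{dist}_{P_\eta^{-1}}$, and finally use $P_\eta(z^{n,k}-T(z^{n,k}))\in\mathcal F(T(z^{n,k}))$ from~\eqref{eq:pdhg} to bound $\mathrm{dist}_{P_\eta^{-1}}(0,\mathcal F(T(z^{n,k})))\le\|z^{n,k}-T(z^{n,k})\|$. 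Collecting constants yields $\mathrm{dist}(z^{n,k},\mathcal Z^*)\le\bigl(1+\tfrac{2}{\eta\alpha_{L_2}}\bigr)\|z^{n,k}-T(z^{n,k})\|=\tfrac{1}{\alpha^{L_2}_\eta}\|z^{n,k}-T(z^{n,k})\|$, which is the claim with $\alpha^{L_2}_\eta=\frac{\eta\alpha_{L_2}}{\eta\alpha_{L_2}+2}$ (so the $s$ in the statement should read $\eta$, matching Proposition~\ref{prop:sharp-stage-1}).

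The main obstacle, and the only genuinely new content relative to Proposition~\ref{prop:sharp-stage-1}, is the localization in the first step: establishing that $z^*+\mathcal K$ agrees with $\mathcal Z^*$ on $B_\delta(z^*)$ and that the attendant constant is the local $\alpha_{L_2}$. This rests on the identification of $N$ and $B_1$ and the characterization of $\mathcal Z^*$ near $z^*$ from \cite{lu2023geometry}, and it is precisely the scale choice $R_2=\delta$ that prevents spurious active constraints from entering the ball. Once this localization and the accompanying radius bookkeeping are secured, every remaining step is a routine repetition of the norm conversions and subdifferential manipulations already carried out in Proposition~\ref{prop:sharp-stage-1}.
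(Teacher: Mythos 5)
Your proposal matches the paper's approach exactly: the paper's entire proof of Proposition~\ref{prop:sharp-stage-2} is the single remark that it follows the same proof as Proposition~\ref{prop:sharp-stage-1}, which is precisely the transcription you carry out (local Euclidean sharpness from \cite{lu2023geometry} for the system~\eqref{eq:local-cone} combined with the local coincidence of $z^*+\mathcal K$ and $\mathcal Z^*$, then the triangle inequality, the norm conversions via Lemma~\ref{lem:change-of-norm}, and the inclusion $P_\eta(z^{n,k}-T(z^{n,k}))\in\mathcal F(T(z^{n,k}))$). Your two additional observations---that the $s$ in the constant should read $\eta$, and that one needs the Stage-I bound $\|z^{n,k}-z^*\|_2\le\delta/2$ from Theorem~\ref{thm:stage-1}(ii) so that $T(z^{n,k})$ stays in the region where the local sharpness applies after the nonexpansive step and the norm round-trip---are both correct, and they address bookkeeping that the paper silently glosses over.
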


Theorem \ref{thm:stage-2} presents the local linear convergence of rHPDHG after identification. The driving force of this stage is the local sharpness $\alpha^{L_2}_\eta$.
\begin{thm}\label{thm:stage-2}
Consider restarted Halpern PDHG (Algorithm \ref{alg:hpdhg-restart}) for solving \eqref{eq:minmax} with step-size $\eta\leq \frac{1}{2\|A\|_2}$. Let $\{z^{n,k}\}$ be the iterates of the algorithm and let $z^*$ be the converging optimal solution, i.e., $z^{n,k} \rightarrow z^*$ as $n\rightarrow \infty$. Denote $\alpha_{\eta}^{L_2}$ the sharpness constant in Proposition \ref{prop:sharp-stage-2}. Denote $N:=\max\left\{n: \|z^{n,0}-\PDHG(z^{n,0})\|\geq \frac{\alpha^{L_1}_{\eta}\delta}{18\sqrt{2\eta}} \right\}$. Then it holds for any $n>N$ that
\begin{enumerate}
    \item[(i)] The restart length $\tau^n$ can be upper bounded as
    \begin{equation*}
        \tau^n \leq \frac{2e}{\alpha_\eta^{L_2}} \ .
    \end{equation*}
    \item[(ii)] The distance to the converging optimal solution decays linearly,
    \begin{equation*}
        \|z^{n,0}-z^*\|\leq \frac{2}{\alpha_\eta^{L_2}}\pran{\frac 1e}^{n-N-1}\|z^{N+1,0}-\PDHG(z^{N+1,0})\| \ .
    \end{equation*}
\end{enumerate}   
\end{thm}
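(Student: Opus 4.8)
The plan is to establish both parts of Theorem~\ref{thm:stage-2} by transcribing the adaptive-restart argument of Theorem~\ref{thm:adaptive-restart}, replacing the global sharpness constant $\alpha_\eta$ with the \emph{local} constant $\alpha_\eta^{L_2}$ of Proposition~\ref{prop:sharp-stage-2}. The crucial preliminary observation is that, for every outer iteration $n>N$, the definition of $N$ guarantees $\|z^{n,0}-T(z^{n,0})\|<\frac{\alpha_\eta^{L_1}\delta}{18\sqrt{2\eta}}$, so Theorem~\ref{thm:stage-1}(ii) together with Lemma~\ref{lem:ball} keeps the entire inner trajectory inside the ball $B_\delta(z^*)$, i.e.\ $\|z^{n,k}-z^*\|_2\le \tfrac\delta2<\delta$ for all $k\le\tau^n$. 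This is precisely the regime in which Proposition~\ref{prop:sharp-stage-2} is valid, so throughout Stage~II I may use $\alpha_\eta^{L_2}\,\mathrm{dist}(z^{n,k},\mathcal Z^*)\le\|z^{n,k}-T(z^{n,k})\|$ at all relevant iterates.

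For part~(i), I would mimic Theorem~\ref{thm:adaptive-restart}(i). Since the restart does not trigger at the inner index $k=\tau^n-1$, the adaptive condition \eqref{eq:adaptive-restart} gives $\frac1e\|z^{n,0}-T(z^{n,0})\|<\|z^{n,\tau^n-1}-T(z^{n,\tau^n-1})\|$, and the sublinear rate of Lemma~\ref{lem:sublinear-feas} bounds the right-hand side by $\frac{2}{\tau^n}\mathrm{dist}(z^{n,0},\mathcal Z^*)$. Rearranging yields $\tau^n<2e\,\mathrm{dist}(z^{n,0},\mathcal Z^*)/\|z^{n,0}-T(z^{n,0})\|$, and the local sharpness bound above (applied at $k=0$, legitimate since $z^{n,0}\in B_\delta(z^*)$) shows this ratio is at most $1/\alpha_\eta^{L_2}$, giving $\tau^n\le 2e/\alpha_\eta^{L_2}$.

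For part~(ii), the key is that the fixed-point residual itself contracts geometrically across outer loops. Writing $r_n:=\|z^{n,0}-T(z^{n,0})\|$, non-expansiveness of $T$ and the restart condition give $r_{n+1}=\|T(z^{n,\tau^n})-T(T(z^{n,\tau^n}))\|\le\|z^{n,\tau^n}-T(z^{n,\tau^n})\|\le\frac1e r_n$ (here $n>N\ge0$ puts us in the $n\ge1$ branch of \eqref{eq:adaptive-restart}), so $r_n\le(\tfrac1e)^{n-N-1}r_{N+1}$ for $n\ge N+1$. I then convert the residual back to the distance to $z^*$: Proposition~\ref{prop:sharp-stage-2} gives $\mathrm{dist}(z^{n,0},\mathcal Z^*)\le\frac1{\alpha_\eta^{L_2}}r_n$, and part~(iv) of Lemma~\ref{lem:iterates-feas} upgrades this to $\|z^{n,0}-z^*\|\le 2\,\mathrm{dist}(z^{n,0},\mathcal Z^*)$. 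Chaining these inequalities produces exactly $\|z^{n,0}-z^*\|\le\frac{2}{\alpha_\eta^{L_2}}(\tfrac1e)^{n-N-1}\|z^{N+1,0}-T(z^{N+1,0})\|$.

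The bulk of the difficulty is not in Stage~II at all but has already been discharged in Stage~I: the only genuinely nontrivial ingredient is the guarantee that the post-identification iterates never leave $B_\delta(z^*)$, which is what lets the \emph{local} sharpness $\alpha_\eta^{L_2}$ --- rather than the conservative global Hoffman constant --- govern the rate and thereby delivers the accelerated eventual linear convergence. Given that containment, both parts are faithful repetitions of the global adaptive-restart analysis with $\alpha_\eta$ replaced by $\alpha_\eta^{L_2}$, so I expect no further obstacle beyond carefully invoking Proposition~\ref{prop:sharp-stage-2} only at iterates certified to lie in $B_\delta(z^*)$.
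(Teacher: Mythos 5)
Your proposal is correct and follows essentially the same route as the paper's proof: both establish that for $n>N$ the iterates remain in $B_{\delta/2}(z^*)$ via Theorem~\ref{thm:stage-1}(ii) so that $\alpha_n\geq\alpha_\eta^{L_2}$, deduce (i) from the restart-length bound $\tau^n\leq 2e/\alpha_n$ of Theorem~\ref{thm:adaptive-restart}(i), and obtain (ii) by chaining Lemma~\ref{lem:iterates-feas}(iv), Proposition~\ref{prop:sharp-stage-2}, and the geometric decay of the fixed-point residual enforced by the restart condition \eqref{eq:adaptive-restart}. The only difference is that you re-derive the $\tau^n$ bound and the residual recursion explicitly where the paper simply cites them, which is harmless.
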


\begin{proof}
Since $n\geq N+1$, we know by part (ii) of Theorem \ref{thm:stage-1} that $\|z^{n,k}-z^*\|_2\leq \frac{\delta}{2}$ and thus $\alpha_n\geq \alpha_\eta^{L_2}$ by Proposition \ref{prop:sharp-stage-2}.
The proof of (i) follows directly from $\tau^n\leq \frac{2e}{\alpha_n}\leq \frac{2e}{\alpha_\eta^{L_2}}$. The proof of (ii) can be derived as follows
\begin{equation*}
    \begin{aligned}
        \|z^{n,0}-z^*\| \leq 2\mathrm{dist}(z^{n,0},\mathcal Z^*)\leq \frac{2}{\alpha_\eta^{L_2}}\|z^{n,0}-\PDHG(z^{n,0})\|\leq \frac{2}{\alpha_\eta^{L_2}}\pran{\frac 1e}^{n-N-1}\|z^{N+1,0}-\PDHG(z^{N+1,0})\| \ ,
    \end{aligned}
\end{equation*}
where the first inequality leverages part (iv) of Lemma \ref{lem:iterates-feas} and the second one follows from Proposition \ref{prop:sharp-stage-2}. The last inequality is due to the restart condition \eqref{eq:adaptive-restart}.
\end{proof}

\begin{rem}
Theorem \ref{thm:stage-2} implies the $\widetilde O\pran{\frac{\|A\|}{\alpha_{L_2}}\log\frac{1}{\epsilon}}$ local linear rate of restarted Halpern PDHG. In contrast, \cite{lu2023geometry} shows an $\widetilde O\pran{\pran{\frac{\|A\|}{\alpha_{L_2}}}^2\log\frac{1}{\epsilon}}$ local linear convergence rate for vanilla PDHG. This demonstrates the accelerated local linear rate of restarted Halpern PDHG.
\end{rem}

\section{Accelerated infeasibility detection of rHPDHG}\label{sec:infeas}
In this section, we discuss the behavior of restarted Halpern PDHG on infeasible LP. We show that rHPDHG can achieve linear convergence for infeasibility detection of LP without any additional assumptions. This is in contrast to the previous work on analyzing PDHG for LP's infeasibility detection~\cite{applegate2024infeasibility}, where the linear convergence rate holds only with a non-degenerate assumption. Furthermore, our linear convergence rate is faster than that in~\cite{applegate2024infeasibility} in the sense of removing a square term; thus, it is an accelerated linear convergence rate.

The studies of general operator splitting methods on infeasible problems are often based on a geometric object, the infimal displacement vector $v$:
\begin{prop}[{\cite[Lemma 4]{pazy1971asymptotic}}]
    Let $T$ be a nonexpansive operator, and then the set $\mathrm{cl}(\mathrm{Range}(\PDHG-I))$ is convex. Consequently, there exists a unique minimum norm vector $v$ in this set:
    \begin{equation*}
        v:=\argmin_{z\in\mathrm{cl}(\mathrm{Range}(\PDHG-I))}\|z\| \ ,
    \end{equation*}
    which is called the infimal displacement vector with respect to the operator $T$.
\end{prop}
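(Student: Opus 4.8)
The statement contains two assertions, and the plan is to reduce the second to the first. Throughout I would work in the Hilbert space $H=(\mathbb R^{m+n},\langle\cdot,P_\eta\cdot\rangle)$, which is a genuine Hilbert space since $P_\eta\succ0$ for $\eta\le\frac{1}{2\|A\|_2}$ (Lemma \ref{lem:change-of-norm}) and in which $T$ is nonexpansive. Once $\mathrm{cl}(\mathrm{Range}(T-I))$ is shown to be convex, it is a nonempty closed convex subset of $H$, so the minimum-norm element is precisely the projection of the origin onto it; existence and uniqueness are then the standard projection theorem, which rests only on completeness and the parallelogram identity (strict convexity of the Hilbert norm). Hence the entire content is the convexity claim, to which I turn next.

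Set $S:=I-T$, so that $\mathrm{Range}(T-I)=-\mathrm{Range}(S)$ and it suffices to prove $\overline{\mathrm{Range}(S)}$ is convex (negation preserves both convexity and uniqueness of the minimum-norm point). First I would record that nonexpansiveness of $T$ makes $S$ monotone: for all $x,y$,
\[
\langle Sx-Sy,\,x-y\rangle=\|x-y\|^2-\langle Tx-Ty,\,x-y\rangle\ge\|x-y\|^2-\|Tx-Ty\|\,\|x-y\|\ge0 \ .
\]
Since $S$ is everywhere defined and continuous on the finite-dimensional $H$, the perturbed map $S+\epsilon I$ is continuous and strongly monotone for every $\epsilon>0$, hence a bijection of $H$ (coercivity/Minty). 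By a density and closedness argument it is then enough to check that for any $y_0=Sx_0$, $y_1=Sx_1\in\mathrm{Range}(S)$ and $\lambda\in[0,1]$, the point $y_\lambda:=(1-\lambda)y_0+\lambda y_1$ belongs to $\overline{\mathrm{Range}(S)}$.

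The core step is a Tikhonov regularization. For $\epsilon>0$ let $x_\epsilon$ be the unique solution of $(S+\epsilon I)x_\epsilon=y_\lambda$; then $Sx_\epsilon=y_\lambda-\epsilon x_\epsilon\in\mathrm{Range}(S)$. Writing $\bar x:=(1-\lambda)x_0+\lambda x_1$, I would apply monotonicity of $S$ at the pairs $(x_\epsilon,x_0)$ and $(x_\epsilon,x_1)$, then combine with weights $1-\lambda$ and $\lambda$; the terms linear in $x_\epsilon$ cancel because $(1-\lambda)(y_\lambda-y_0)+\lambda(y_\lambda-y_1)=0$, leaving
\[
\epsilon\,\langle x_\epsilon,\,x_\epsilon-\bar x\rangle\le \lambda(1-\lambda)\,\langle y_1-y_0,\,x_1-x_0\rangle=:M \ ,
\]
where $M\ge0$, again by monotonicity of $S$. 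This yields $\epsilon\|x_\epsilon\|^2\le M+\epsilon\|x_\epsilon\|\,\|\bar x\|$, whence $\|x_\epsilon\|=O(1/\sqrt\epsilon)$ and therefore $\epsilon x_\epsilon\to0$ as $\epsilon\to0$. Consequently $y_\lambda=\lim_{\epsilon\to0}(y_\lambda-\epsilon x_\epsilon)\in\overline{\mathrm{Range}(S)}$, which establishes convexity. This is exactly the classical fact that the closure of the range of a maximal monotone operator is convex, specialized to $S=I-T$.

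The step I expect to require the most care is the regularization bound: one must resist the temptation to conclude that $\|x_\epsilon\|$ itself is bounded (it generally is not), and instead observe that the constant $M$ is nonnegative precisely because of the monotonicity inequality $\langle y_1-y_0,\,x_1-x_0\rangle\ge0$, so that the weaker but sufficient conclusion $\epsilon\|x_\epsilon\|=O(\sqrt\epsilon)\to0$ goes through. A secondary point to get right is that every inner product and projection must be taken with respect to $P_\eta$ rather than the Euclidean structure, since $T$ is nonexpansive only in $\|\cdot\|$; because $P_\eta\succ0$ this is a genuine Hilbert-space norm and all of the above—monotonicity, surjectivity of $S+\epsilon I$, and the projection theorem—transfers verbatim.
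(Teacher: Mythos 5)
Your proof is correct. Note that the paper offers no proof of this proposition at all---it is quoted directly from Pazy's Lemma 4---so there is no in-paper argument to diverge from; your reconstruction (monotonicity of $S=I-T$, surjectivity of $S+\epsilon I$ via Minty/coercivity, the weighted monotonicity inequalities whose $x_\epsilon$-linear terms cancel to give $\epsilon\langle x_\epsilon, x_\epsilon-\bar x\rangle\le M$ and hence $\epsilon x_\epsilon\to 0$, then the Hilbert projection theorem for the minimum-norm element) is exactly the classical argument underlying that citation, and you carry it out correctly in the $P_\eta$-inner product, which is the right setting since $T$ is nonexpansive only in that norm.
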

Intuitively, the vector $v$ is the minimum size perturbation to ensure the perturbed operator has a fixed point. Suppose $T$ is the operator induced by PDHG \eqref{eq:pdhg} for solving infeasible \eqref{eq:lp}. In the context of LP, it is proven that the set $\mathrm{Range}(\PDHG-I)$ is closed~\cite[Proposition 3]{applegate2024infeasibility} and the vector $v$ is indeed a certificate of infeasibility of \eqref{eq:lp}, i.e., $v$ satisfies Farkas lemma.
\begin{prop}[{\cite[Theorem 4]{applegate2024infeasibility}}]\label{thm:full-infeas}
    Let $T$ be the operator induced by PDHG on \eqref{eq:lp} with step-size $\eta< \frac{1}{\|A\|}$, and let $\{z^k=(x^k,y^k)\}_{k=0}^{\infty}$ be a sequence generated by the fixed point iteration from an arbitrary starting point $z^0$. Then, one of the following holds:

    (a). If both primal and dual are feasible, then the iterates $(x^k,y^k)$ converge to a primal-dual solution $z^*=(x^*,y^*)$ and $v=(T-I)(z^*)=0$.

    (b). If both primal and dual are infeasible, then both primal and dual iterates diverge to infinity. Moreover, the primal and dual components of the infimal displacement vector $v=(v_x,v_y)$ give certificates of dual and primal infeasibility, respectively.

    (c). If the primal is infeasible and the dual is feasible, then the dual iterates diverge to infinity, while the primal iterates converge to a vector $x^*$. The dual-component $v_y$ is a certificate of primal infeasibility. Furthermore, there exists a vector $y^*$ such that $v=(T-I)(x^*,y^*)$.

    (d). If the primal is feasible and the dual is infeasible, then the same conclusions as in the previous item hold by swapping primal with dual.
\end{prop}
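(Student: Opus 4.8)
The plan is to treat all four cases through a single structural lens built from the infimal displacement vector $v$, the closedness of $\mathrm{Range}(T-I)$, and Farkas' lemma. First I would record the two facts that drive everything. Since the PDHG map $T$ is firmly nonexpansive in the canonical norm $\|\cdot\|$ (via its equivalence to Douglas--Rachford splitting) and $\mathrm{Range}(T-I)$ is closed for LP (the cited Proposition~3), the consecutive differences converge, $z^{k+1}-z^k=(T-I)z^k\to v$, and Pazy's asymptotic result gives $\tfrac1k(z^k-z^0)\to v$. The latter already yields the dichotomy ``diverge iff $v\neq 0$, with drift along $v$'': the primal block $x^k$ (resp.\ dual block $y^k$) escapes to infinity exactly when $v_x\neq 0$ (resp.\ $v_y\neq 0$), and remains bounded otherwise. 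Closedness of the range also means the minimum defining $v$ is attained, so there is a point $\hat z=(\hat x,\hat y)$ with $v=(T-I)(\hat z)$; this is precisely the representation claimed in parts (c) and (d).

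Case (a) is the soft one. If both primal and dual are feasible, strong LP duality produces an optimal primal--dual pair, i.e.\ a KKT point, which is a fixed point of $T$; hence $\mathrm{Fix}(T)\neq\emptyset$, so $0\in\mathrm{Range}(T-I)$ forces $v=0$, and the standard convergence theory for firmly nonexpansive iterations gives $z^k\to z^*\in\mathrm{Fix}(T)$.

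For the infeasible cases I would unpack $v=(T-I)(\hat z)$ coordinatewise using the PDHG update \eqref{eq:pdhg}. Writing $\hat x^+=\text{proj}_{\mathbb R^n_+}(\hat x-\tau A^\top\hat y-\tau c)$, one reads off $v_x=\hat x^+-\hat x$ and $v_y=\sigma\big(A(2\hat x^+-\hat x)-b\big)$. The variational (normal-cone) characterization of the projection then forces definite sign conditions on the coordinates where the projection is active, and feeding these through shows that $v_y$ solves the homogeneous system certifying primal infeasibility (a sign-constrained condition on $A^\top v_y$ together with $b^\top v_y>0$), while $v_x$ solves the system certifying dual infeasibility ($Av_x=0$, $v_x\ge 0$, $c^\top v_x<0$). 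Matching which certificate can be nonzero against the feasibility status of each problem, using that Farkas' lemma makes each of primal/dual either feasible or the holder of exactly such a certificate, produces the four-way split: $v_x=0$ iff the dual is feasible, and $v_y=0$ iff the primal is feasible.

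The hard part is twofold. First, I must upgrade the certificate inequalities from weak to strict (e.g.\ $b^\top v_y>0$, not merely $\ge 0$): here I would invoke the minimality of $v$ as the least-norm displacement together with the standing infeasibility hypothesis to exclude the degenerate possibility $v_y=0$, and then use LP duality to promote $\ge$ to $>$. Second, in the mixed cases (c) and (d) I must show that the \emph{feasible} block genuinely converges (e.g.\ $x^k\to x^*$ in (c)) rather than merely staying bounded while the other block runs off along its component of $v$, and that this limit coincides with the $\hat x$ from the range representation. For this I would show that the displacement restricted to the feasible block tends to zero and run a Fej\'er-monotonicity argument relative to the set of points realizing the exact displacement $v=(T-I)(\hat z)$ (available because the range is closed, not merely approximately). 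Decoupling the converging block from the diverging one inside a single nonexpansive recursion is the most delicate step of the argument.
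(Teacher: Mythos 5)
The paper itself does not prove this proposition: it is imported verbatim as \cite[Theorem~4]{applegate2024infeasibility}, so there is no in-paper argument to compare against. Your outline does track the strategy of that cited source at a high level --- firm nonexpansiveness of $T$ in the $P_\eta$-norm, Pazy's infimal displacement vector $v$ with $(T-I)z^k\to v$ and $\tfrac1k(z^k-z^0)\to v$, closedness of $\mathrm{Range}(T-I)$ so that $v$ is attained, and reading Farkas certificates off the coordinatewise form of $v=(T-I)(\hat z)$. Case (a) as you argue it is fine.

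There are, however, two genuine gaps. First, your claim that the primal block ``remains bounded'' whenever $v_x=0$ does not follow from $\tfrac1k(z^k-z^0)\to v$: vanishing of a component of $v$ only excludes \emph{linear} growth of that block, not unbounded sublinear growth, and it certainly does not give convergence to a point $x^*$ as parts (c)--(d) assert. You later flag decoupling the convergent block from the divergent one as ``the most delicate step,'' but your proposed fix (Fej\'er monotonicity relative to the set of points realizing $v$) is circular as stated, because the earlier assertion of boundedness was itself unjustified; the actual argument needs the polyhedral, piecewise-affine structure of the PDHG map --- finite-time identification of the active coordinates of $v$ followed by passage to a shifted, feasible auxiliary operator (exactly the device Lemmas~\ref{lem:infeas-iden}--\ref{lem:auxiliary-lp} of this paper use for the Halpern variant). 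Second, ``use LP duality to promote $\ge$ to $>$'' is not a proof of strictness of the certificate inequality $b^\top v_y>0$; the clean route is the variational characterization of $v$ as the least-norm displacement, which yields a quantitative bound of the form $b^\top v_y\gtrsim\|v_y\|_2^2$, so strictness follows directly from $v_y\neq 0$. With these two steps supplied, the sketch would be a faithful reconstruction of the cited theorem.
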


It turns out that the iterates of Halpern PDHG also encode the information of infeasibility and the certificate $v$ can be recovered by the normalized iterates $\frac{2z^{k}}{k}$ and the difference $T(z^k)-z^k$ with sublinear rate. The results are summarized in the following lemma.
\begin{lem}[{\cite[Theorem 7 and 8]{park2023accelerated}}]\label{lem:sublinear-infeas}
    Consider $\{z^{k}\}$ the iterates of Halpern PDHG for solving infeasible \eqref{eq:lp}. Let $v\in\mathrm{Range}(\PDHG-I)$ the unique minimizer $v:=\argmin_{u\in{\mathrm{Range}(\PDHG-I)}}\|u\|^2$. Denote $z^*\in \mathcal Z_v^*:=\{z^*\mid \PDHG(z^*)-z^*-v=0\}$. Then it holds for any $k\geq 1$ that
    \begin{equation*}
        \left\| \frac{2}{k}(z^{k}-z^0)-v \right\|\leq \frac{4}{k}\mathrm{dist}(z^0,\mathcal Z_v^*)\ ,
    \end{equation*}
    and
    \begin{equation*}
        \left\|\PDHG(z^k)-z^k-v\right\|\leq \frac{\sqrt{\log k+5}+1}{k+1}\mathrm{dist}(z^0,\mathcal Z_v^*) \ .
    \end{equation*}
\end{lem}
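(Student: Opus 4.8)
The plan is to reduce the infeasible analysis to the fixed-point setting through the shifted operator $S := T - v$. Since $v$ is the minimum-norm element of $\mathrm{Range}(T-I)$ and this range is closed for the PDHG operator (as used in Proposition~\ref{thm:full-infeas} and \cite{applegate2024infeasibility}), the minimizer is attained, so there exists $z^*$ with $T(z^*)-z^*=v$; equivalently $\mathcal Z_v^*=\mathrm{Fix}(S)$ is nonempty. The operator $S$ is nonexpansive because it differs from $T$ only by a constant translation. I would fix such a $z^*$ once and for all, write every displacement relative to it using $T(z^*)-z^*=v$, and track throughout the \emph{shifted residual} $\tilde r^i:=(T-I)z^i-v=(S-I)z^i$, whose norm is exactly the quantity to be controlled in the second inequality.

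First I would establish the exact averaging identity for Halpern iterates. Writing the update as $z^{k+1}-z^0=\tfrac{k+1}{k+2}\bigl(T(z^k)-z^0\bigr)$, multiplying by $(k+2)$, and unrolling the resulting first-order recursion gives
\begin{equation*}
    z^k-z^0=\frac{1}{k+1}\sum_{i=0}^{k-1}(i+1)\bigl(T(z^i)-z^i\bigr)=\frac{k}{2}v+\frac{1}{k+1}\sum_{i=0}^{k-1}(i+1)\,\tilde r^i \ ,
\end{equation*}
where the second equality uses $\sum_{i=0}^{k-1}(i+1)=\tfrac{k(k+1)}{2}$. Hence $\tfrac2k(z^k-z^0)-v$ equals $\tfrac{2}{k(k+1)}\sum_{i=0}^{k-1}(i+1)\tilde r^i$, and the first claimed bound is \emph{equivalent} to the bounded-shadow estimate $\bigl\|z^k-z^0-\tfrac k2 v\bigr\|\le 2\,\mathrm{dist}(z^0,\mathcal Z_v^*)$. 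I would prove this by a monotonicity/energy argument for $S$ anchored at $z^*$, tracking $\|z^k-z^*-\tfrac k2 v\|$ along the recursion and using nonexpansiveness of the PDHG operator; the point of doing this directly (rather than summing individual residual norms) is that it keeps the weighted residual sum at order $O(k)$, which is what yields the clean logarithm-free rate $\tfrac4k$.

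The second bound is the delicate one and is where I expect the real difficulty. Here I must control $\|\tilde r^k\|=\|(S-I)z^k\|$ itself. Observe that if $z^k$ were generated by running Halpern directly on $S$, then Lemma~\ref{lem:sublinear-feas} applied to $S$ (legitimate, since $\mathrm{Fix}(S)=\mathcal Z_v^*\neq\varnothing$) would immediately give the clean rate $\tfrac{2}{k+1}\mathrm{dist}(z^0,\mathcal Z_v^*)$ with \emph{no} logarithm. The obstruction is precisely that the Halpern iteration of $T$ is not the Halpern iteration of $S$ — the two trajectories differ by accumulated multiples of $v$ — so the residual is being evaluated along the ``wrong'' sequence. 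The plan is to derive a recursion for $\|\tilde r^k\|$ (or for a potential combining $\|\tilde r^k\|^2$ with a cross term $\langle z^k-z^*-\tfrac k2 v,\tilde r^k\rangle$) in which the trajectory discrepancy enters as harmonic error terms of size $\propto\tfrac1j$; summing these and applying Cauchy--Schwarz converts $\sum_{j\le k}\tfrac1j=O(\log k)$ into the factor $\sqrt{\log k+5}$ after taking square roots, with the $+1$ absorbing the leading Halpern term. The main obstacle is therefore to bookkeep this accumulation tightly enough that the rate degrades only by the $\sqrt{\log k}$ factor and no worse, i.e.\ to establish the correct almost-monotonicity of the shifted residual despite the trajectory mismatch. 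Since the statement is quoted from \cite{park2023accelerated}, it suffices in this paper to verify that the PDHG operator $T$ meets their hypotheses — nonexpansiveness and closedness of $\mathrm{Range}(T-I)$ — and then invoke their Theorems~7 and~8.
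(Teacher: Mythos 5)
The paper offers no proof of this lemma at all: it is imported verbatim from \cite{park2023accelerated}, so the decisive step of your proposal --- verifying that the PDHG operator $T$ is nonexpansive and that $\mathrm{Range}(T-I)$ is closed (hence $v$ is attained and $\mathcal Z_v^*\neq\emptyset$, by \cite[Proposition 3]{applegate2024infeasibility}), and then invoking Theorems 7 and 8 of \cite{park2023accelerated} --- is exactly the paper's route and is sufficient.

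Two caveats on your supplementary direct-proof sketch, neither fatal given the citation. First, for the $\tfrac{4}{k}$ bound, your averaging identity and the reduction to $\|z^k-z^0-\tfrac k2 v\|\le 2\,\mathrm{dist}(z^0,\mathcal Z_v^*)$ are correct, but the induction you describe is not closed by ``nonexpansiveness'' alone: to propagate the estimate you need $\|T(z^k)-T(z^*)-\tfrac k2 v\|\le \|z^k-z^*-\tfrac k2 v\|$, i.e.\ nonexpansiveness against the \emph{moving} anchor $z^*+\tfrac k2 v$, which requires the ray property $T(z^*+tv)=z^*+(t+1)v$ for all $t\ge 0$ (equivalently, $z^*+tv\in\mathcal Z_v^*$). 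That property is not a consequence of $T(z^*)-z^*=v$ and nonexpansiveness by themselves; it uses the minimality of $v$ in the closed convex set $\mathrm{Range}(T-I)$, via the projection inequality $\langle u,v\rangle\ge\|v\|^2$ for every $u\in\mathrm{Range}(T-I)$, which forces $T(z^*+v)-(z^*+v)=v$ and then extends along the ray. This is the operator-level analogue of part (iii) of Lemma \ref{lem:auxiliary-lp}, which the paper proves only for the auxiliary operator. Second, and relatedly, your remark that the $T$- and $S$-Halpern trajectories ``differ by accumulated multiples of $v$'' presupposes the translation equivariance $T(z+cv)=T(z)+cv$, which holds on that ray but not globally --- this is precisely why the second bound with the $\sqrt{\log k+5}$ factor is delicate, and what you give for it is a plan rather than a proof. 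Since the statement is quoted, these gaps do not invalidate the proposal; they only mean the sketch could not yet replace the citation.
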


However, the previous results are mostly for the sublinear rate of Halpern PDHG. In this section, we will show that rHPDHG is able to achieve linear convergence to a certificate of infeasibility for LP. 

The main result is built upon the sublinear convergence in Lemma \ref{lem:sublinear-infeas}, the closeness to degeneracy of infimal displacement vector $\delta_v$ (which is an extension of the closeness to degeneracy $\delta$ for feasible problem defined in Section \ref{sec:two_stage}), an auxiliary problem \eqref{eq:auxiliary}, an auxiliary PDHG operator $\tilde T$, and its sharpness constant $\alpha_\eta$. 

More specifically, we first define the partition of coordinates:
\begin{equation*}
    \begin{aligned}
        & B=\{i\in[n]: (v_x)_i>0\}\\
        & N_1=\{i\in[n]: (v_x)_i=0,(A^\top v_y)_i=0\}\\
        & N_2=\{i\in[n]: (v_x)_i=0,(A^\top v_y)_i>0\} \ .
    \end{aligned}
\end{equation*}
With some abuse of notations, these three sets are defined based on the infimal displacement vector $v$ for infeasible/unbounded problems, and they are different from those three sets defined in Section \ref{sec:two_stage} based on the converging optimal solution $z^*$ for feasible and bounded problems. We further denote the closeness to degeneracy for infeasible problems as     
\begin{equation*}
        \delta_v = \min\left\{ \min_{i\in B}(v_x)_i, \ \min_{i\in N_2}\frac{A_i^\top v_y}{\|A\|_2}  \right\} \ .
    \end{equation*}

Using the three sets $B, N_1$ and $N_2$, we then define the following feasible and bounded auxiliary LP~\cite{applegate2024infeasibility}.
\begin{equation}\label{eq:lp-aux}
    \begin{aligned}
        \min_{x_B,x_{N_1},x_{N_2}}& \ \pran{c_B+\frac{(v_x)_B}{\eta}}^\top x_B+c_{N_1}^\top x_{N_1}+c_{N_2}^\top x_{N_2}\\
        \mathrm{s.t.}& \ A_Bx_B+A_{N_1}x_{N_1}+A_{N_2}x_{N_2}=b+\frac{v_y}{\eta}\\
        & \ x_{N_1}\geq 0,\ x_{N_2}\geq 0 \ .
    \end{aligned}
\end{equation}

The PDHG operator on solving \eqref{eq:lp-aux} is denoted as $\widetilde{T}(z)=z^+$ with the following update rule
\begin{equation}\label{eq:auxiliary}
\begin{cases}
        x_B^{+}\leftarrow x_B-\eta A_B^\top y-\eta c_B -(v_x)_B\\
        x_{N_1}^{+}\leftarrow \text{proj}_{\mathbb R^{|N_1|}_+}(x_{N_1}-\eta A_{N_1}^\top y-\eta c_{N_1})-(v_x)_{N_1}\\
        x_{N_2}^{+}\leftarrow \text{proj}_{\mathbb R^{|N_2|}_+}(x_{N_2}-\eta A_{N_2}^\top y-\eta c_{N_2})-(v_x)_{N_2}\\
        y^{+}\leftarrow y+\eta A(2x^{+}-x)-\eta b-v_y
    \end{cases} \ .
\end{equation}

In addition, let $\tilde z^*_0\in \mathrm{Fix}(\widetilde T)\cap\{z:x_{N_2}=0\}$ (whose existence is proven later in Lemma \ref{lem:auxiliary-lp}). We know by \cite{pena2021new} that there exists a constant $\alpha>0$ such that for any $z=(x,y)$ satisfying $\|z\|_2\leq 2(\|z^0-\tilde z^*_0\|_2+\|\tilde z^*_0\|_2)$ and $x_{N_2}=0$, it holds that
    \begin{equation*}
        \alpha\mathrm{dist}_2(z,\mathrm{Fix}(\widetilde T)\cap\{z:x_{N_2}=0\})\leq \mathrm{dist}_2(0,\mathcal F(z)) \ .
    \end{equation*}
Following the same proof of Proposition \ref{prop:sharp}, we know there exists a constant $\alpha_\eta=\frac{\eta\alpha}{\eta\alpha+2}$ such that for any $z=(x,y)$ satisfying $\|z\|_2\leq 2(\|z^0-\tilde z^*_0\|_2+\|\tilde z^*_0\|_2)$ and $x_{N_2}=0$,
\begin{equation*}
        \alpha_\eta\mathrm{dist}(z,\mathrm{Fix}(\widetilde T)\cap\{z:x_{N_2}=0\})\leq \|\widetilde T(z)-z\| \ .
    \end{equation*}

Theorem \ref{thm:infeas} presents the complexity of infeasibility detection of rHPDHG, which is the main result of this section.
\begin{thm}\label{thm:infeas}
    Consider $\{z^{n,0}\}$ the iterates of restarted Halpern-PDHG with fixed restart frequency. Then there exists an optimal solution $\tilde z_0^*\in \mathcal Z_v^*:=\{z^*\mid \PDHG(z^*)-z^*-v=0\}$ and $R=2(\|z^{0,0}-\tilde z_0^*\|_2+\|\tilde z^*_0\|_2)$ such that for
    \begin{equation*}
        k\geq k^*=\left\lceil\max\left\{\frac{12R}{C\delta_v}, \frac{(\sqrt 2+1)e}{\alpha_\eta}\log\pran{\frac{(\sqrt 2+1)e}{\alpha_\eta}}\right\}\right\rceil \ ,
    \end{equation*}
    where $C\leq \min\left\{\frac{1}{\sqrt 2(1+\eta\|A\|_2)},\frac{\eta}{\sqrt 2(1+\eta\|A\|_2)}\right\}$ with $\eta\leq \frac{1}{2\|A\|_2}$, it holds that
        \begin{equation*}
            \|T(z^{n,0})-z^{n,0}-v\| \leq 2e^{-n}\mathrm{dist}(z^{0,0},\mathcal Z_v^*) \ .
        \end{equation*}
\end{thm}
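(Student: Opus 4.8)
The plan is to reproduce the fixed-frequency restart argument of Theorem \ref{thm:fixed-restart}, but carried out in the ``shifted frame'' determined by the infimal displacement vector $v$, after first showing that the iterates enter a region where the PDHG operator $T$ coincides with the auxiliary operator $\widetilde T$ up to the translation by $v$. Throughout I abbreviate $S := \widetilde T$, so that $\mathrm{Fix}(S) = \mathcal Z_v^*$, and I track the shifted residual $B_n := \|T(z^{n,0}) - z^{n,0} - v\|$, aiming for the one-step contraction $B_{n+1} \le \tfrac1e B_n$ together with a base bound, after which recursion gives the claim. The base bound needs no identification: picking $z^* \in \mathcal Z_v^*$ and writing $T(z^{0,0}) - z^{0,0} - v = (T(z^{0,0}) - T(z^*)) - (z^{0,0} - z^*)$, nonexpansiveness of $T$ yields $B_0 \le 2\,\mathrm{dist}(z^{0,0},\mathcal Z_v^*)$, which is the $n=0$ case.

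For the identification step I use the first estimate of Lemma \ref{lem:sublinear-infeas}: $\tfrac{2}{k}(z^{n,k}-z^{n,0})$ converges to $v$ at rate $O(1/k)$, so for $k$ of order $R/(C\delta_v)$ the iterate is well-approximated by $z^{n,0}+\tfrac{k}{2}v$. The closeness-to-degeneracy $\delta_v$ then forces the active set: for $i\in N_2$ one has $(A^\top y^{n,k})_i \approx \tfrac{k}{2}(A^\top v_y)_i \ge \tfrac{k}{2}\delta_v\|A\|_2$, driving the argument of the primal projection negative so that $x^{n,k}_{N_2}=0$, while for $i\in B$ the coordinate $x^{n,k}_i \approx \tfrac{k}{2}(v_x)_i>0$ keeps the projection on $B$ inactive. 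In exactly this regime a direct comparison of \eqref{eq:pdhg} and \eqref{eq:auxiliary} gives $T(z) = S(z) + v$ (here the certificate structure of Proposition \ref{thm:full-infeas}, in particular $Av_x = 0$, makes the dual correction consistent). The threshold $k^* \ge \tfrac{12R}{C\delta_v}$ guarantees identification has occurred by the end of every inner loop, and since continued drift along $v$ only strengthens these sign conditions, the identification is preserved for all later iterates, in particular at each restart point $z^{n+1,0} = T(z^{n,k^*})$.

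With the reduction in hand the contraction runs as in Theorem \ref{thm:fixed-restart}. On the identified affine region $T$ is affine with linear part $Q$, and the infimal displacement satisfies $(Q-I)v = 0$; writing $\mathrm{res}_S(z) := S(z)-z$ one checks $\mathrm{res}_S(T(z)) = \mathrm{res}_S(S(z))$, so nonexpansiveness of $S$ gives the shifted-residual monotonicity $B_{n+1} = \|\mathrm{res}_S(z^{n+1,0})\| = \|\mathrm{res}_S(S(z^{n,k^*}))\| \le \|\mathrm{res}_S(z^{n,k^*})\| = \|T(z^{n,k^*}) - z^{n,k^*} - v\|$. The second estimate of Lemma \ref{lem:sublinear-infeas} bounds the last quantity by $\tfrac{\sqrt{\log k^*+5}+1}{k^*+1}\,\mathrm{dist}(z^{n,0},\mathcal Z_v^*)$, and for $n\ge 1$ the sharpness of $S$ converts $\mathrm{dist}(z^{n,0},\mathcal Z_v^*) \le \tfrac{1}{\alpha_\eta}B_n$. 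Chaining gives $B_{n+1} \le \tfrac{\sqrt{\log k^*+5}+1}{(k^*+1)\alpha_\eta}B_n$, and the choice $k^* \ge \tfrac{(\sqrt2+1)e}{\alpha_\eta}\log(\tfrac{(\sqrt2+1)e}{\alpha_\eta})$ makes the prefactor at most $\tfrac1e$ via the standard $\log$-versus-linear comparison. The same two inequalities (monotonicity plus the sublinear bound, without sharpness at the possibly un-identified point $z^{0,0}$) separately give $B_1 \le \tfrac{\sqrt{\log k^*+5}+1}{k^*+1}\,\mathrm{dist}(z^{0,0},\mathcal Z_v^*) \le \tfrac1e\,\mathrm{dist}(z^{0,0},\mathcal Z_v^*)$, which together with $B_{n+1}\le \tfrac1e B_n$ for $n\ge 1$ yields $B_n \le 2e^{-n}\mathrm{dist}(z^{0,0},\mathcal Z_v^*)$ for all $n$.

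I expect the genuine difficulty to be the interaction between identification and the unbounded drift of the iterates, rather than the (otherwise routine) restart bookkeeping. The sharpness of the auxiliary problem is stated only on a fixed ball $\|z\|_2 \le R$, whereas $z^{n,0}$ escapes every such ball along the $v$-direction. The resolution is to note that both $\mathrm{res}_S$ and $\mathrm{dist}(\cdot,\mathcal Z_v^*)$ are invariant under translation by $v$ (because $(Q-I)v=0$ and $v$ is a recession direction of $\mathcal Z_v^* = \mathrm{Fix}(S)$), so one may translate $z^{n,0}$ back into $\{\|z\|_2\le R\}$, apply sharpness there, and transfer the inequality back unchanged. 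Making this translation-invariance precise, and rigorously establishing that the identified active set is stable under the continued drift, is where the real work lies; everything downstream is a faithful transcription of the feasible-case contraction into the shifted frame.
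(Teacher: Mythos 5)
Your proposal has the same architecture as the paper's proof (identification via Lemma \ref{lem:sublinear-infeas} and $\delta_v$, the relation $T=\widetilde T+v$ on the identified region, sharpness of the auxiliary problem, restart contraction with the same constants), but it contains a genuine gap precisely at the point you yourself defer as ``where the real work lies'': the boundedness of the drift-corrected iterates. Translation invariance of $\mathrm{dist}(\cdot,\mathcal Z_v^*)$ and of the auxiliary residual along $v$ only says that inequalities transfer \emph{if} $z^{n,0}$ can be translated by a multiple of $v$ into the ball $\{\|z\|_2\le R\}$; whether $z^{n,0}$ actually stays within bounded distance of the ray $\{\tilde z_0^*+\lambda v:\lambda\ge 0\}$ is exactly what must be proved, and nothing in your argument does so. The paper establishes this by an explicit nonexpansiveness induction: defining $\lambda_{n,k}$ by the Halpern recursion and $\hat z^{n,k}=z^{n,k}-\lambda_{n,k}v$, it shows $\|\hat z^{n,k}-\tilde z_0^*\|\le\|\hat z^{n,0}-\tilde z_0^*\|\le\|z^{0,0}-\tilde z_0^*\|$ (Lemma \ref{lem:infeas-nonexpansive} plus the opening induction of the theorem's proof), using that $\tilde z_0^*+\lambda v\in\mathcal Z_v^*$ so that $T(\tilde z_0^*+\lambda v)=\tilde z_0^*+(\lambda+1)v$ and comparing each Halpern step against the correctly drifted reference point. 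This boundedness is needed in two distinct places: (i) the sharpness constant $\alpha_\eta$ of the auxiliary problem is only valid on a fixed ball, so Lemma \ref{lem:infeas-sharp} is vacuous without it; (ii) with a \emph{fixed} restart frequency $k^*$, the identification threshold $\frac{6\sqrt{2\eta}\,\mathrm{dist}(z^{n,0},\mathcal Z_v^*)}{C\delta_v}$ must be bounded uniformly in $n$, which again requires $\mathrm{dist}(z^{n,0},\mathcal Z_v^*)\le\|z^{0,0}-\tilde z_0^*\|$ for all $n$ before any contraction has been established.

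A second, smaller gap: your bookkeeping contracts the residual $B_n=\|T(z^{n,0})-z^{n,0}-v\|$, which forces you to apply the identity $T=\widetilde T+v$ and the sharpness inequality \emph{at the restart points} $z^{n,0}=T(z^{n-1,k^*})$, i.e.\ you need $(T(z^{n,0})_x)_B>0$ and $(T(z^{n,0})_x)_{N_2}=0$ one PDHG step beyond where the inner-loop argument guarantees identification. Your justification --- ``continued drift along $v$ only strengthens these sign conditions, so identification is preserved for all later iterates'' --- is asserted, not proved, and as stated it is false for iterates early in the next inner loop (the Halpern anchor pulls back toward $z^{n,0}$, and identification is only re-established late in each inner loop). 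The paper's arrangement avoids this entirely: it runs the contraction on $\mathrm{dist}(z^{n,0},\mathcal Z_v^*)$, invoking sharpness only in the form $\alpha_\eta\,\mathrm{dist}(T(z^{n-1,k^*}),\mathcal Z_v^*)\le\|T(z^{n-1,k^*})-z^{n-1,k^*}-v\|$ at points where identification is guaranteed, and converts to the residual only at the very end via $\|T(z^{n,0})-z^{n,0}-v\|\le 2\,\mathrm{dist}(z^{n,0},\mathcal Z_v^*)$, which follows from nonexpansiveness of $T$ alone and needs no identification. If you restructure your chain this way and supply the missing boundedness induction, your proof becomes the paper's proof.
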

\begin{rem}
    Suppose step-size $\eta=\frac{D}{\|A\|_2}$ for some $D\leq \frac 12$. Then the complexity of restarted Halpern-PDHG with fixed frequency equals $\widetilde O\pran{\max\left\{\frac{\|A\|_2}{\delta_v}, \frac{\|A\|_2}{\alpha}\right\}\log\frac{1}{\epsilon}}$. We can achieve better complexity $\widetilde O\pran{\frac{\|A\|_2}{\delta_v}+\frac{\|A\|_2}{\alpha}\log\frac{1}{\epsilon}}$ if we restart the first epoch at $k\geq \frac{12R}{C\delta}$ and uses restart frequency $k\geq \frac{(\sqrt 2+1)e}{\alpha_\eta}\log\pran{\frac{(\sqrt 2+1)e}{\alpha_\eta}}$ afterwards. In comparison, the last iterates of vanilla PDHG converges to $\epsilon$ accuracy within $O\pran{\frac{\|A\|_2}{\delta_v}+\pran{\frac{\|A\|_2}{\alpha}}^2\log\frac{1}{\epsilon}}$ iterations.
\end{rem}

We present the proof of Theorem \ref{thm:infeas} in the rest of this section. The major challenge to establishing Theorem \ref{thm:infeas} is that the iterates of rHPDHG on solving infeasible LP go to infinity. Thus, the sharpness condition used in the analysis for feasible problems (i.e., Proposition \ref{prop:sharp}) does not hold. To overcome this difficulty,  we first derive a finite-time identification of Halpern iterates (Lemma \ref{lem:infeas-iden}). After identification, we connect the iterates of rHPDHG for the original LP with rHPDHG for the auxiliary LP \eqref{eq:auxiliary} (Lemma \ref{lem:auxiliary-lp}), {which is a feasible and bounded problem}. Utilizing such a connection, we develop a local sharpness condition of rHPDHG for the original LP (Lemma \ref{lem:infeas-sharp}), which leads to the conclusion of Theorem \ref{thm:infeas}.

First, we show Halpern PDHG can identify set $B$ and $N_2$ for infeasible/unbounded problems after a finite number of iterations.
\begin{lem}\label{lem:infeas-iden}
    Denote closeness to degeneracy $\delta = \min\left\{ \min_{i\in B}(v_x)_i, \min_{i\in N_2}\frac{A_i^\top v_y}{\|A\|}  \right\}$. Consider the iterates of Halpern PDHG $\{z^k\}$. Let $\tilde z_0^*\in \mathrm{Fix}(\widetilde T)\cap\{z:x_{N_2}=0\}$. Then for any $k\geq \frac{6\sqrt{2\eta}\|z^{0}-\tilde z_0^*\|}{C\delta_v}$ where $C\leq \frac{1}{\sqrt 2(1+\eta\|A\|_2)} \min \left\{1,\eta\right\}$ with $\eta\leq \frac{1}{2\|A\|_2}$, it holds that
    \begin{equation*}
        (T(z^k)_x)_B>0,\ (T(z^k)_x)_{N_2}=0 \ .
    \end{equation*}
\end{lem}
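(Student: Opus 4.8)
The plan is to show that the Halpern iterates travel along the ray $\tilde z_0^*+\tfrac{k}{2}v$ up to a uniformly bounded error, and then to read off the identification from the sign of the PDHG primal pre-projection, coordinate by coordinate. First I would apply the first inequality of Lemma~\ref{lem:sublinear-infeas}, which gives $\|z^k-z^0-\tfrac{k}{2}v\|\le 2\,\mathrm{dist}(z^0,\mathcal Z_v^*)$. Using that $\tilde z_0^*\in\mathcal Z_v^*$ (verified below), so that $\mathrm{dist}(z^0,\mathcal Z_v^*)\le\|z^0-\tilde z_0^*\|=:D$, a triangle inequality yields the key tracking estimate $z^k=\tilde z_0^*+\tfrac{k}{2}v+r^k$ with $\|r^k\|\le 3D$, hence $\|r^k\|_2\le 3\sqrt{2\eta}D$ by Lemma~\ref{lem:change-of-norm}. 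Writing $q_i^k:=x_i^k-\eta A_i^\top y^k-\eta c_i$, the claim reduces to showing $q_i^k\le0$ for $i\in N_2$ and $q_i^k>0$ for $i\in B$, since $(T(z^k)_x)_i=[q_i^k]^+$.

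Next I would anchor the constants at $\tilde z_0^*=(\tilde x,\tilde y)$ using the fixed-point equations of $\widetilde T$ in \eqref{eq:auxiliary}. Because $(v_x)_{N_1}=(v_x)_{N_2}=0$, the $B$-block (which carries no projection) gives $-\eta A_i^\top\tilde y-\eta c_i=(v_x)_i$ for $i\in B$, while the $N_2$-block together with $\tilde x_{N_2}=0$ gives $-\eta A_i^\top\tilde y-\eta c_i\le0$ for $i\in N_2$. Substituting $y^k=\tilde y+\tfrac{k}{2}v_y+r_y^k$ and $x^k=\tilde x+\tfrac{k}{2}v_x+r_x^k$ decomposes $q_i^k=\tilde q_i+\tfrac{k}{2}\big[(v_x)_i-\eta A_i^\top v_y\big]+\big[(r_x^k)_i-\eta A_i^\top r_y^k\big]$, where $\tilde q_i:=\tilde x_i-\eta A_i^\top\tilde y-\eta c_i$ equals $\tilde x_i+(v_x)_i\ge0$ on $B$ and is $\le0$ on $N_2$. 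I would then bound the perturbation by $|(r_x^k)_i-\eta A_i^\top r_y^k|\le(1+\eta\|A\|_2)\|r^k\|_2\le 3(1+\eta\|A\|_2)\sqrt{2\eta}D$.

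The case analysis splits on the drift coefficient $(v_x)_i-\eta A_i^\top v_y$. For $i\in N_2$ I use $(v_x)_i=0$ and $A_i^\top v_y\ge\|A\|_2\delta_v>0$, so the drift is $\le-\eta\|A\|_2\delta_v$; combined with $\tilde q_i\le0$ and the bounded perturbation, $q_i^k\le0$ once $k$ exceeds the stated threshold, giving $(T(z^k)_x)_{N_2}=0$. For $i\in B$ I invoke the complementarity of the infimal displacement vector, namely $Av_x=0$ and $(v_x)_i(A^\top v_y)_i=0$ (so $A_i^\top v_y=0$ on $B$); the drift is then $(v_x)_i\ge\delta_v>0$, and with $\tilde q_i\ge0$ the same threshold forces $q_i^k>0$, i.e.\ $(T(z^k)_x)_B>0$. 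Taking the larger of the two thresholds and tracking the norm-conversion factor $\sqrt{2\eta}$ and the coefficient $(1+\eta\|A\|_2)$ yields a threshold of the stated form $k\ge 6\sqrt{2\eta}D/(C\delta_v)$; the two entries defining $\min\{1,\eta\}$ in $C$ reflect that the $B$ drift has coefficient $1$ while the $N_2$ drift carries the extra factor $\eta$.

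The main obstacle is the $B$-block. In contrast to $N_2$, where the diverging dual coordinate $A_i^\top y^k$ unambiguously drives the pre-projection to $-\infty$, on $B$ the dual term could in principle cancel the linear growth of $x_i^k$; ruling this out rests on the structural facts $Av_x=0$ and $v_x\odot(A^\top v_y)=0$ for the infimal displacement vector, which are also exactly what is needed to verify $\tilde z_0^*\in\mathcal Z_v^*$ (inactivity of the $B$-projection at $\tilde z_0^*$ and matching of the dual block both require $Av_x=0$). A second delicate point is purely book-keeping: converting the PDHG-norm tracking bound into coordinatewise $\ell_2$ control of $(r_x^k)_i$ and $\eta A_i^\top r_y^k$ so that the resulting threshold matches the prescribed constant $C$.
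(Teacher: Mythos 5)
Your proposal is correct and follows essentially the same route as the paper's proof: both apply the first bound of Lemma~\ref{lem:sublinear-infeas} together with $\tilde z_0^*\in\mathcal Z_v^*$ to show that $z^k$ tracks the ray $\tilde z_0^*+\tfrac{k}{2}v$ up to an error of order $\|z^0-\tilde z_0^*\|$, then anchor the pre-projection quantities at the fixed-point relations of $\widetilde T$ and run the same coordinatewise case analysis, using $A_B^\top v_y=0$ for $i\in B$ and $(v_x)_{N_2}=0$, $A_i^\top v_y\geq \|A\|_2\delta_v$ for $i\in N_2$. Your explicit drift-plus-perturbation decomposition of $q_i^k$ is merely a repackaging of the paper's separate bounds on $x_i^k$ and $A_i^\top y^k$, and even the constant bookkeeping (the factor $1+\eta\|A\|_2$ and the $\min\{1,\eta\}$ split in $C$) matches the paper's.
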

\begin{proof}
First we know from the choice of $k\geq \frac{6\sqrt{2\eta}\|z^{0}-\tilde z_0^*\|}{C\delta_v}$ and Lemma \ref{lem:sublinear-infeas} that
\begin{equation}\label{eq:iden}
    \left\|\frac{2}{k}(z^{k}-z^*)-v\right\|_2\leq\sqrt{2\eta}\left\|\frac{2}{k}(z^{k}-z^*)-v\right\|\leq \sqrt{2\eta}\frac{4}{k}\|z^0-z^*\|+\sqrt{2\eta}\frac{2}{k}\|z^0-z^*\|=\sqrt{2\eta}\frac{6}{k}\|z^0-z^*\|\leq C\delta_v \ ,
\end{equation}
where the first inequality uses $\|\cdot\|_2\leq \sqrt{2\eta} \|\cdot\|$ due to step-size $\eta\leq \frac{1}{2\|A\|_2}$ and the second inequality is from Lemma \ref{lem:sublinear-infeas}. In the following, we prove the identification of the non-basis coordinate $i\in B$ and the non-degenerate basis coordinate $i\in N_2$ separately, given the condition \eqref{eq:iden} holds.

Let $\tilde z_0^*=(x^*,y^*)\in \mathrm{Fix}(\widetilde T)\cap\{z:x_{N_2}=0\}$. Consider coordinate $i\in B$. 
{\small
    \begin{equation*}
        \begin{aligned}
            \left|\frac 2k A_i^\top (y^k-y^*)\right|=\left|\frac 2k A_i^\top (y^k-y^*) -A_i^\top v_y\right|\leq \left\|\frac 2k A^\top (y^k-y^*)-A^\top v_y\right\|_2\leq \|A\|_2 \left\|\frac 2k (y^k-y^*)-v_y\right\|_2\leq C\|A\|_2\delta_v \ ,
        \end{aligned}
    \end{equation*}
}
where the first equality uses $A_B^\top v_y=0$, and by \eqref{eq:iden},
    \begin{equation*}
        \left|\frac 2k (x_i^k-x_i^*) -(v_x)_i\right|\leq \left\|\frac 2k (x^k-x^*) -v_x\right\|_2\leq C\delta_v \ .
    \end{equation*}
Thus we have
    \begin{equation}\label{eq:b-1}
        A_i^\top y^k \leq A_i^\top y^*+\frac{k}{2}C\|A\|_2\delta_v \ ,
    \end{equation}
    and
    \begin{equation}\label{eq:b-2}
        x_i^k \geq x_i^*+\frac{k}{2}(v_x)_i-\frac{k}{2}C\delta_v \ .
    \end{equation}
    Therefore it holds that $(T(z^k)_x)_i>0$ for $i\in B$ by noticing that
    {\small
    \begin{equation*}
        \begin{aligned}
            x_i^k-\eta(c_i+A_i^\top y^k)\geq x_i^*+\frac{k}{2}(v_x)_i-\frac{k}{2}(1+\eta\|A\|_2)C\delta_v-\eta(c_i+A_i^\top y^*)\geq x_i^*+\frac{k}{2}(v_x)_i-\frac{k}{2}(1+\eta\|A\|_2)C\delta_v \geq x_i^*>0 \ ,
        \end{aligned}
    \end{equation*}
    }
    where the first inequality combines \eqref{eq:b-1} and \eqref{eq:b-2} and the second one follows from $c_i+A_i^\top y^*\leq 0$.

Now consider $i\in N_2$. By \eqref{eq:iden}, we have
    \begin{equation*}
        \begin{aligned}
            \left|\frac 2k A_i^\top (y^k-y^*) -A_i^\top v_y\right|\leq \left\|\frac 2k A^\top (y^k-y^*)-A^\top v_y\right\|_2\leq \|A\|_2 \left\|\frac 2k (y^k-y^*)-v_y\right\|_2\leq C\|A\|_2\delta_v
        \end{aligned}
    \end{equation*}
    and
    \begin{equation*}
        \left|\frac 2k (x_i^k-x_i^*)\right|=\left|\frac 2k (x_i^k-x_i^*) -(v_x)_i\right|\leq \left\|\frac 2k (x^k-x^*)-v_x\right\|_2\leq C\delta_v
    \end{equation*}
    where the first inequality utilizes $(v_x)_i=0$ for $i\in N_2$ and the last one uses \eqref{eq:iden}. Thus it holds that
    \begin{equation}\label{eq:n2-1}
        A_i^\top y^k \geq A_i^\top y^*+\frac{k}{2}A_i^\top v_y-\frac{k}{2}C\|A\|_2\delta_v
    \end{equation}
    and
    \begin{equation}\label{eq:n2-2}
        x_i^k \leq x_i^*+\frac{k}{2}C\delta_v=\frac{k}{2}C\delta_v\ .
    \end{equation}
    Hence we have $(T(z^k)_x)_i=0$ for $i\in N_2$ since
    \begin{equation*}
        \begin{aligned}
            x_i^k-\eta(c_i+A_i^\top y^k)\leq \frac{k}{2}(1+\eta\|A\|_2)C\delta_v-\eta(c_i+A_i^\top y^*)-\eta\frac{k}{2}A_i^\top v_y\leq-\eta(c_i+A_i^\top y^*)< 0 \ ,
        \end{aligned}
    \end{equation*}
    where the first inequality follows from \eqref{eq:n2-1} and \eqref{eq:n2-2}, and the second inequality utilizes the definition of $C$ and $\delta$. 
\end{proof}

Nest, we build up the relationship between operators $T$ for the original infeasible problem and the operator $\widetilde T$ for the feasible auxiliary problem after identification.
\begin{lem}\label{lem:auxiliary-lp}
    Let $v\in\mathrm{Range}(T-I)$ be the infimal displacement vector.
    \begin{enumerate}
        \item[(i)] For any $z$ such that $(T(z)_x)_B>0$ and $(T(z)_x)_{N_2}=0$, it holds that for any $\lambda\geq 0$
        \begin{equation*}
            \widetilde{T}(z-\lambda v)=T(z)-(\lambda+1)v \ .
        \end{equation*}
        \item[(ii)] There exists a fixed point $\tilde z^*\in\mathrm{Fix}(\widetilde T)$ such that $\tilde x^*_{N_2}=0$, i.e., $\mathrm{Fix}(\widetilde T)\cap\{z:x_{N_2}=0\}\neq \O$ \ .
        \item[(iii)] For any $\tilde z^*\in \mathrm{Fix}(\widetilde T)\cap\{z:x_{N_2}=0\}$, it holds that for any $\lambda\geq 0$ that
        \begin{equation*}
            T(\tilde z^*+\lambda v)-(\tilde z^*+\lambda v)=v \ ,
        \end{equation*}
        i.e., $\{\tilde z^*+\lambda v:\tilde z^*\in \mathrm{Fix}(\widetilde T)\cap\{z:x_{N_2}=0\},\lambda \geq 0\}\subseteq \mathcal Z_v^*$.
    \end{enumerate}
\end{lem}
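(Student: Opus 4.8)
The plan is to treat part (i) as the computational workhorse and then obtain (ii) and (iii) as the existence statement and the forward converse of (i). Everything reduces to two structural identities for the infimal displacement vector $v=(v_x,v_y)$: since $v_x$ is a dual-infeasibility certificate we have $Av_x=0$ with $v_x\geq 0$, $(v_x)_B>0$ and $(v_x)_{N_1\cup N_2}=0$; and $v_y$ satisfies $A_B^\top v_y=0$, $A_{N_1}^\top v_y=0$, $A_{N_2}^\top v_y>0$ (the last two being the definitions of $N_1,N_2$, and $A_B^\top v_y=0$ being the complementarity fact already used in Lemma \ref{lem:infeas-iden}). I would record these first, since each block of the computation collapses to cancelling a $\lambda$-dependent cross term via $Av_x=0$ or $A_B^\top v_y=0$, or to keeping a projection on the correct side of its kink using the partition signs. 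The organizing intuition is that $\widetilde T$ in \eqref{eq:auxiliary} is exactly the original operator $T$ with the drift $v$ subtracted off, so an affine shift of the input along $v$ ought to pass through the operator.

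For part (i) I would compare $\widetilde T(z-\lambda v)$ with $T(z)-(\lambda+1)v$ block by block. On $B$, the hypothesis $(T(z)_x)_B>0$ makes the $\mathbb{R}_+$-projection in $T$ inactive, so that block of $T$ coincides with the free-variable update of $\widetilde T$; expanding $\widetilde T(z-\lambda v)_{x,B}$ and using $A_B^\top v_y=0$ annihilates the cross term $\lambda\eta A_B^\top v_y$ and leaves exactly $T(z)_{x,B}-(\lambda+1)(v_x)_B$. On $N_1$, both $(v_x)_{N_1}=0$ and $A_{N_1}^\top v_y=0$ render the shift invisible to the projection argument, so the block equals $T(z)_{x,N_1}$, which matches since $(v_x)_{N_1}=0$. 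On $N_2$ the hypothesis $(T(z)_x)_{N_2}=0$ says the pre-projection quantity is $\leq 0$, and one must argue the projection stays saturated at $0$ after the shift so that it matches the right-hand side, which is $0$ on $N_2$. Finally, inserting the verified $x$-update into the $y$-update leaves a residual cross term $(\lambda+2)\eta Av_x$, which vanishes by $Av_x=0$, producing $T(z)_y-(\lambda+1)v_y$.

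For part (ii) I would use that the auxiliary LP \eqref{eq:lp-aux} is feasible and bounded (by construction, \cite{applegate2024infeasibility}), so $\widetilde T$ has a fixed point, i.e.\ a primal--dual optimum; to force $x_{N_2}=0$ I would run a complementary-slackness argument showing the $N_2$ reduced cost $c_{N_2}+A_{N_2}^\top\tilde y^*$ is strictly positive (driven by $A_{N_2}^\top v_y>0$), or, alternatively, transport identified iterates of $T$ through part (i): the de-drifted points $z^k-\lambda_k v$ stay bounded, have their $N_2$-component pinned to $0$ by identification, and any limit point is a fixed point of $\widetilde T$ in $\{x_{N_2}=0\}$. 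Part (iii) is then the forward run of the same computation. Starting from the fixed-point relations of $\widetilde T$ (on $B$: $A_B^\top\tilde y^*+c_B=-(v_x)_B/\eta$; on $N_2$: $\tilde x^*_{N_2}=0$ with $A_{N_2}^\top\tilde y^*+c_{N_2}\geq 0$; and $A\tilde x^*=b+v_y/\eta$), I would verify $T(\tilde z^*+\lambda v)=\tilde z^*+(\lambda+1)v$ block by block: the forward shift drives the $N_2$ arguments further negative via $A_{N_2}^\top v_y>0$ so the projection stays at $0$, drives the $B$ arguments upward via $(v_x)_B>0$ (with $\tilde x^*_B\geq 0$ inherited from the nonnegativity of the original primal iterates used in (ii)) so the $B$-projection is inactive, and the cross terms cancel again through $A_B^\top v_y=0$ and $Av_x=0$.

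The main obstacle is the active-set bookkeeping needed to make a nonsmooth projection commute with the affine shift by $v$: one must certify, uniformly in $\lambda$, that the $B$-projection is inactive and the $N_2$-projection is saturated at $0$. This is exactly where the partition signs and the identification hypotheses are indispensable, and it is where the backward identity in (i) is genuinely delicate — the forward direction in (iii) is unconditional, whereas in (i) the backward shift $z-\lambda v$ raises the $N_2$ arguments, so the identity is valid only while $z-\lambda v$ remains in the identified region (equivalently, while $z$ lies far enough along the $v$-ray), which is precisely the regime in which the lemma is later invoked. A secondary, more routine obstacle is pinning down the two certificate identities $Av_x=0$ and $A_B^\top v_y=0$, which follow from $v$ being the infimal displacement vector and satisfying Farkas' lemma (Proposition \ref{thm:full-infeas}).
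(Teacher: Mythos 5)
Your overall architecture coincides with the paper's: part (i) is proved by exactly the same block-by-block expansion using $Av_x=0$, $A_B^\top v_y=0$, $A_{N_1}^\top v_y=0$ and the partition signs; part (iii) is the forward run of that computation from the fixed-point relations of $\widetilde T$; and part (ii) is obtained by feeding an identified point into part (i). Moreover, your warning about the $N_2$ block of (i) is correct, and it exposes a genuine defect in the paper's own proof: the paper writes the shifted $N_2$ projection argument as $x_{N_2}-\lambda(v_x)_{N_2}-\eta A_{N_2}^\top y-\eta c_{N_2}$, silently discarding the term $+\lambda\eta A_{N_2}^\top v_y$ that comes from the dual input $y-\lambda v_y$. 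Since $A_{N_2}^\top v_y>0$, the hypothesis $(T(z)_x)_{N_2}=0$ (which only yields $x_{N_2}-\eta A_{N_2}^\top y-\eta c_{N_2}\leq 0$) does not keep that projection at zero once $\lambda$ is large; the identity is unconditional only at $\lambda=0$, or for as long as the de-drifted point $z-\lambda v$ itself remains identified, exactly as you say.

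Where your proposal has a concrete gap is part (ii). Your route (b) de-drifts general identified iterates and passes to a limit point, but this requires invoking (i) at the accumulated drift $\lambda=\lambda_k>0$ --- precisely the regime you have just conceded is conditional --- and identification of $z^k$ (i.e.\ $(T(z^k)_x)_{N_2}=0$) does not imply that $z^k-\lambda_k v$ is identified, since the backward shift adds the positive quantity $\lambda_k\eta A_{N_2}^\top v_y$ to the $N_2$ argument; so as written the argument is circular/incomplete. The paper's construction sidesteps this entirely: pick $z^*$ with $T(z^*)-z^*=v$ (possible because $\mathrm{Range}(T-I)$ is closed, \cite[Proposition 3]{applegate2024infeasibility}); its orbit satisfies $T^{k+1}(z^*)=T^k(z^*)+v$ (\cite[Lemma 2]{applegate2024infeasibility}) and is eventually identified (\cite[Fact 1]{applegate2024infeasibility}); then part (i) applied at $\lambda=0$ only --- where the problematic term vanishes --- gives $\widetilde T(T^K(z^*))=T^{K+1}(z^*)-v=T^K(z^*)$, a fixed point with vanishing $N_2$ block, with no limit argument needed. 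You should adopt this device (or fully develop your route (a), which is not what the paper does and would need its own proof). Finally, your concern about needing $\tilde x_B^*+(\lambda+1)(v_x)_B\geq 0$ in (iii) is legitimate, but your fix is insufficient: the lemma quantifies over \emph{all} of $\mathrm{Fix}(\widetilde T)\cap\{z: x_{N_2}=0\}$, and $x_B$ is a free variable in \eqref{eq:lp-aux}, so nonnegativity cannot be ``inherited from (ii)''; on this step the paper's proof likewise asserts the $B$-projection is inactive without justification, so here you and the paper share the same unaddressed point.
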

\begin{proof}
    (i) Note that $A_B^\top v_y=0$ and $Av_x=0$, and we have
    \begin{equation*}
        \begin{aligned}
            & (\widetilde T(z-\lambda v)_x)_B =x_B-\lambda(v_x)_B-\eta\pran{c_B+A_B^\top (y-\lambda v_y)}-(v_x)_B=x_B-\eta\pran{c_B+A_B^\top y}-(\lambda+1)(v_x)_B \ .\\
            & \qquad\qquad\qquad \  =(T(z)_x)_B-(\lambda+1)(v_x)_B \ .\\
            & (\widetilde T(z-\lambda v)_x)_{N_1} =\text{proj}_{\mathbb R^{|N_1|}_+}(x_{N_1}-\lambda(v_x)_{N_1}-\eta A_{N_1}^\top y-\eta c_{N_1})-(v_x)_{N_1}=(T(z)_x)_{N_1}=(T(z)_x)_{N_1}-(\lambda+1)(v_x)_{N_1}\ .\\
            & (\widetilde T(z-\lambda v)_x)_{N_2} =\text{proj}_{\mathbb R^{|N_2|}_+}(x_{N_2}-\lambda(v_x)_{N_2}-\eta A_{N_2}^\top y-\eta c_{N_2})-(v_x)_{N_2}=0=(T(z)_x)_{N_2}-(\lambda+1)(v_x)_{N_2}\ .\\
            &\widetilde T(z-\lambda v)_y =y-\lambda v_y+\eta A(2T(z-\lambda v)_x-(x-\lambda v_x))-\eta b-v_y\\
            & \qquad\qquad\ \ = y+\eta A(2T(z)_x-x-(\lambda+2) v_x))-\eta b-(\lambda+1)v_y\\
            & \qquad\qquad\ \ =T(z)_y-(\lambda+1) v_y\ .
        \end{aligned}
    \end{equation*}
    
    (ii) Let $z^*$ satisfy $T(z^*)-z^*=v$. Let $K$ be the iteration such that for any $k\geq K$, $(T^k(z^*)_x)_B>0$ and $(T^k(z^*)_x)_{N_2}=0$ (the existence of such $K$ comes from \cite[Fact 1]{applegate2024infeasibility}) and thus $\widetilde{T}(T^K(z^*))=T^{K+1}(z^*)-v$ by part (i). Note that $T^{K+1}(z^*)-v=T^{K}(z^*)$ by \cite[Lemma 2]{applegate2024infeasibility} and we have $\widetilde{T}(T^K(z^*))=T^{K+1}(z^*)-v=T^{K}(z^*)$, i.e., $T^K(z^*)\in \mathrm{Fix}(\widetilde T)\cap\{z:x_{N_2}=0\}$.

    (iii) The proof follows from direct calculation as shown below: 
    \begin{equation*}
        \begin{aligned}
            & (T(\tilde z^*+\lambda v)_x)_B =\text{proj}_{\mathbb R^{|B|}_+}\pran{\tilde x_B^*+\lambda(v_x)_B-\eta\pran{c_B+A_B^\top (\tilde y^*+\lambda v_y)}}=\text{proj}_{\mathbb R^{|B|}_+}\pran{\tilde x_B^*+(\lambda+1)(v_x)_B}\\
            & \qquad\qquad\qquad \ \  =\tilde x_B^*+(\lambda+1)(v_x)_B\ .\\
            & (T(\tilde z^*+\lambda v)_x)_{N_1} =\text{proj}_{\mathbb R^{|N_1|}_+}(\tilde x_{N_1}^*+\lambda(v_x)_{N_1}-\eta A_{N_1}^\top (\tilde y^*+\lambda v_y)-\eta c_{N_1})\\
            & \qquad\qquad\qquad \ \ \ =\text{proj}_{\mathbb R^{|N_1|}_+}(\tilde x_{N_1}^*-\eta A_{N_1}^\top \tilde y^*-\eta c_{N_1})=\tilde x_{N_1}^*=\tilde x_{N_1}^*+(\lambda+1)(v_x)_{N_1}\ .\\
            & (T(\tilde z^*+\lambda v)_x)_{N_2} =\text{proj}_{\mathbb R^{|N_2|}_+}(\tilde x_{N_2}^*+\lambda (v_x)_{N_2}-\eta A_{N_2}^\top (\tilde y^*+\lambda (v_x)_{N_2})-\eta c_{N_2})\\
            & \qquad\qquad\qquad \ \ \  = \text{proj}_{\mathbb R^{|N_2|}_+}(-s A_{N_2}^\top (\tilde y^*+\lambda (v_x)_{N_2})-\eta c_{N_2})=0=\tilde x_{N_2}^*+(\lambda+1)(v_x)_{N_2} \ .\\
            &T(\tilde z^*+\lambda v)_y =\tilde y^*+\lambda v_y+\eta A(2(\tilde x^*+(\lambda+1)v_x)-(\tilde x^*+\lambda v_x))-\eta b\\
            & \qquad\qquad\ \ \ \  =\tilde  y^*+\eta A(\tilde x^*+(\lambda+2) v_x)-\eta b+\lambda v_y\\
            & \qquad\qquad\ \ \ \  =\tilde y^*+(\lambda+1) v_y \ .
        \end{aligned}
    \end{equation*}
    Therefore, we have $T(\tilde z^*+\lambda v)-(\tilde z^*+\lambda v)=v$ for any $\tilde z^*\in \mathrm{Fix}(\widetilde T)\cap\{z:x_{N_2}=0\}$.
\end{proof}

Next, we denote $\lambda_{k+1}:=\frac{k+1}{k+2}(\lambda_k+1)+\frac{1}{k+2}\lambda_0$ and $\hat z^{k+1}:=z^{k+1}-\lambda_{k+1}v$. The following lemma shows that $\hat z^k$ is bounded.
\begin{lem}\label{lem:infeas-nonexpansive}
    Consider $\{z^k\}$ the iterates of Halpern PDHG. Then for any $\tilde z^*_0\in \mathrm{Fix}(\widetilde T)\cap\{z:x_{N_2}=0\}$, it holds for any $k\geq 0$ that
    \begin{equation*}
        \|\hat z^{k}-\tilde z^*_0\|\leq \|\hat z^0-\tilde z^*_0\| \ ,
    \end{equation*}
    and thus
    \begin{equation*}
        \|\hat z^k\|\leq \|\hat z^0-\tilde z^*_0\|+\|\tilde z^*_0\|,\quad\ \|\hat z^k\|_2\leq 2(\|\hat z^0-\tilde z^*_0\|_2+\|\tilde z^*_0\|_2) \ .
    \end{equation*}
\end{lem}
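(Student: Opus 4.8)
The plan is to show that the shifted sequence $\{\hat z^k\}$ is \emph{exactly} the sequence of Halpern PDHG iterates for the feasible and bounded auxiliary LP \eqref{eq:lp-aux}, generated by the operator $\widetilde T$ and initialized at $\hat z^0$. Once this is in place, the asserted contraction toward $\tilde z^*_0$ reduces verbatim to the nonexpansiveness argument of part (i) of Lemma \ref{lem:iterates-feas}, now applied to $\widetilde T$ in place of $T$. The anchor $\tilde z^*_0\in\mathrm{Fix}(\widetilde T)\cap\{z:x_{N_2}=0\}$ exists by part (ii) of Lemma \ref{lem:auxiliary-lp}, and the recursion $\lambda_{k+1}=\frac{k+1}{k+2}(\lambda_k+1)+\frac{1}{k+2}\lambda_0$ preserves $\lambda_k\geq 0$ whenever $\lambda_0\geq 0$, so the hypothesis $\lambda\geq 0$ in Lemma \ref{lem:auxiliary-lp}(i) is met at every step.

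The key step is an algebraic identity. Starting from the Halpern update $z^{k+1}=\frac{k+1}{k+2}T(z^k)+\frac{1}{k+2}z^0$ and subtracting $\lambda_{k+1}v$, I substitute the recursion for $\lambda_{k+1}$ and regroup the $v$-terms to obtain
\begin{equation*}
    \hat z^{k+1}=\frac{k+1}{k+2}\bigl(T(z^k)-(\lambda_k+1)v\bigr)+\frac{1}{k+2}\bigl(z^0-\lambda_0 v\bigr) \ .
\end{equation*}
The second bracket is precisely $\hat z^0$. For the first bracket, part (i) of Lemma \ref{lem:auxiliary-lp} gives $T(z^k)-(\lambda_k+1)v=\widetilde T(z^k-\lambda_k v)=\widetilde T(\hat z^k)$, provided the iterate has identified the active structure, i.e.\ $(T(z^k)_x)_B>0$ and $(T(z^k)_x)_{N_2}=0$. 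This yields the Halpern recursion for $\widetilde T$,
\begin{equation*}
    \hat z^{k+1}=\frac{k+1}{k+2}\widetilde T(\hat z^k)+\frac{1}{k+2}\hat z^0 \ .
\end{equation*}

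With this recursion in hand, I would run the same induction as in Lemma \ref{lem:iterates-feas}(i): since $\widetilde T(\tilde z^*_0)=\tilde z^*_0$, the convexity of the Halpern step, the triangle inequality, and the nonexpansiveness of the PDHG operator $\widetilde T$ in $\|\cdot\|$ give
\begin{equation*}
    \|\hat z^{k+1}-\tilde z^*_0\|\leq \frac{k+1}{k+2}\|\hat z^k-\tilde z^*_0\|+\frac{1}{k+2}\|\hat z^0-\tilde z^*_0\|\leq \|\hat z^0-\tilde z^*_0\| \ ,
\end{equation*}
closing the induction. The two remaining bounds are then immediate: the triangle inequality yields $\|\hat z^k\|\leq \|\hat z^k-\tilde z^*_0\|+\|\tilde z^*_0\|\leq \|\hat z^0-\tilde z^*_0\|+\|\tilde z^*_0\|$, and converting each norm through the two-sided equivalence of Lemma \ref{lem:change-of-norm} (exactly as in the proof of Lemma \ref{lem:iterates-feas}(ii), using $2\eta\leq 2$) produces $\|\hat z^k\|_2\leq 2(\|\hat z^0-\tilde z^*_0\|_2+\|\tilde z^*_0\|_2)$.

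The main obstacle is not the algebra, which is forced once the definitions of $\lambda_k$ and $\hat z^k$ are given, but the hypothesis needed to invoke Lemma \ref{lem:auxiliary-lp}(i): the reduction to $\widetilde T$ is licensed only at steps where $(T(z^k)_x)_B>0$ and $(T(z^k)_x)_{N_2}=0$. This is precisely the finite-time identification guaranteed by Lemma \ref{lem:infeas-iden}, which holds once $k\geq \frac{6\sqrt{2\eta}\|z^0-\tilde z^*_0\|}{C\delta_v}$. Consequently the recursion, and hence the monotonicity, can only be chained across steps at which identification has already occurred, so I would apply the lemma to iterates past this threshold (checking that identification, once attained, persists at every subsequent step used in the induction). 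This is exactly why the restart length $k^*$ in Theorem \ref{thm:infeas} absorbs the $\frac{12R}{C\delta_v}$ identification term.
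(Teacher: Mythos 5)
There is a genuine gap. Your reduction rests on Lemma \ref{lem:auxiliary-lp}(i), i.e.\ on the identity $T(z^k)-(\lambda_k+1)v=\widetilde T(\hat z^k)$, which is only valid at iterates satisfying the identification conditions $(T(z^k)_x)_B>0$ and $(T(z^k)_x)_{N_2}=0$. You acknowledge this, but the proposed remedy (run the argument only past the identification threshold) does not recover the lemma as stated: the claim is $\|\hat z^{k}-\tilde z^*_0\|\leq \|\hat z^0-\tilde z^*_0\|$ \emph{for all} $k\geq 0$, and the induction must be anchored at $k=0$ because the right-hand side is the distance of the \emph{initial} point to the anchor. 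For small $k$ identification need not have occurred, so your recursion $\hat z^{k+1}=\frac{k+1}{k+2}\widetilde T(\hat z^k)+\frac{1}{k+2}\hat z^0$ is not available there, the chain is broken at its base, and no bound relative to $\hat z^0$ follows. This matters downstream: the proof of Theorem \ref{thm:infeas} chains $\|\hat z^{n,k^*}-\tilde z_0^*\|\leq\|\hat z^{n,0}-\tilde z_0^*\|$ within each epoch, which implicitly traverses the pre-identification iterates of that epoch.

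The paper's proof avoids the issue entirely by never invoking part (i) of Lemma \ref{lem:auxiliary-lp} on the iterate. Instead it applies part (iii) to the \emph{anchor}: since $\tilde z^*_0+\lambda_k v\in\mathcal Z_v^*$ for every $\lambda_k\geq 0$, one has $T(\tilde z^*_0+\lambda_k v)=\tilde z^*_0+(\lambda_k+1)v$ unconditionally, so
\begin{equation*}
\left\|T(z^k)-\tilde z^*_0-(\lambda_k+1)v\right\|=\left\|T(z^k)-T(\tilde z^*_0+\lambda_k v)\right\|\leq \left\|z^k-\tilde z^*_0-\lambda_k v\right\|=\|\hat z^k-\tilde z^*_0\|
\end{equation*}
by nonexpansiveness of the \emph{original} operator $T$, with no hypothesis on $z^k$ whatsoever. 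Your algebraic regrouping of the Halpern step and the $\lambda$-recursion is correct and is exactly the first inequality of the paper's induction; replacing your appeal to Lemma \ref{lem:auxiliary-lp}(i) at the iterate with an appeal to Lemma \ref{lem:auxiliary-lp}(iii) at the shifted fixed point repairs the argument and makes it hold for all $k\geq 0$.
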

\begin{proof}
We prove by induction. Suppose for some $k\geq 0$, $\|\hat z^{k}-\tilde z^*_0\|\leq \|\hat z^0-\tilde z^*_0\|$ and then
    \begin{equation*}
        \begin{aligned}
            \|\hat z^{k+1}-\tilde z^*_0\| &= \|z^{k+1}-\lambda_{k+1}v-\tilde z^*_0\|=\left\| \frac{k+1}{k+2}T(z^k)+\frac{1}{k+2}z^0-\tilde z^*_0-\lambda_{k+1}v\right\|\\
            & \leq \frac{k+1}{k+2}\|T(z^k)-\tilde z^*_0-(\lambda_k+1)v\|+\frac{1}{k+2}\|z^0-\tilde z^*_0-\lambda_0v\|\\
            & = \frac{k+1}{k+2}\|T(z^k)-T(\tilde z^*_0+\lambda_kv)\|+\frac{1}{k+2}\|\hat z^0-\tilde z^*_0\|\\
            & \leq \frac{k+1}{k+2}\|z^k-\tilde z^*_0-\lambda_kv\|+\frac{1}{k+2}\|\hat z^0-\tilde z^*_0\|=\frac{k+1}{k+2}\|\hat z^k-\tilde z^*_0\|+\frac{1}{k+2}\|\hat z^0-\tilde z^*_0\| \leq \|\hat z^0-\tilde z^*_0\| \ ,
        \end{aligned}
    \end{equation*}
    where the third inequality uses $T(\tilde z_0^*+\lambda_kv)=\tilde z_0^*+\lambda_kv+v$ since $\tilde z_0^*+\lambda_kv\in \mathcal Z_v^*$ and the second inequality utilizes the non-expansiveness of PDHG operator $T$. This implies for any $k$, $\|\hat z^{k}-\tilde z^*_0\|\leq \|\hat z^0-\tilde z^*_0\|$and thus for any $k\geq 0$.
    \begin{equation*}
        \|\hat z^k\|\leq \|\hat z^k-\tilde z^*_0\|+\|\tilde z^*_0\|\leq \|\hat z^0-\tilde z^*_0\|+\|\tilde z^*_0\| \ .
    \end{equation*}
    The last inequality utilizes Lemma \ref{lem:change-of-norm}.
\end{proof}

The following lemma implies that sharpness holds after identification.
\begin{lem}\label{lem:infeas-sharp}
    Consider $\{z^k\}$ the iterates of Halpern PDHG. Suppose there exists $K$ such that for any $k\geq K$, $(T(z^k)_x)_B>0$ and $(T(z^k)_x)_{N_2}=0$. Then there exists an $\alpha_\eta>0$ such that for any $k\geq K$
    \begin{equation*}
        \alpha_\eta\mathrm{dist}(T(z^k),\mathcal Z_v^*\leq \|T(z^k)-z^k-v\| \ .
    \end{equation*}
\end{lem}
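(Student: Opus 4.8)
The plan is to transport the sharpness of the feasible auxiliary problem \eqref{eq:auxiliary} back to the original operator $T$ by shifting the iterates along the direction $v$. Recall $\hat z^k=z^k-\lambda_k v$ (with $\lambda_0=0$, so $\hat z^0=z^0$ and $\lambda_k\geq 0$ for all $k$), and set $w:=\widetilde T(\hat z^k)$. The starting point I would use is the identity
\begin{equation*}
    \widetilde T(\hat z^k)-\hat z^k = T(z^k)-z^k-v \ ,
\end{equation*}
which follows from Lemma \ref{lem:auxiliary-lp}(i) applied at $z=z^k$ with $\lambda=\lambda_k$; the required hypotheses $(T(z^k)_x)_B>0$ and $(T(z^k)_x)_{N_2}=0$ hold for every $k\geq K$ by assumption. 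This rewrites the quantity we want to lower bound as the fixed-point residual of the auxiliary operator $\widetilde T$ evaluated at $\hat z^k$.

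Next I would verify that $w$ is an admissible point for the auxiliary sharpness condition stated before Theorem \ref{thm:infeas}. Since $(v_x)_{N_2}=0$ and $(T(z^k)_x)_{N_2}=0$, the $N_2$-block of $w=T(z^k)-(\lambda_k+1)v$ vanishes, so $w\in\{z:x_{N_2}=0\}$; this is precisely why one shifts to the image $w$ rather than to $\hat z^k$ itself, whose $N_2$-block need not be zero. For boundedness, non-expansiveness of $\widetilde T$ together with Lemma \ref{lem:infeas-nonexpansive} gives $\|w-\tilde z_0^*\|\leq\|\hat z^k-\tilde z_0^*\|\leq\|\hat z^0-\tilde z_0^*\|$, so $w$ lies in the ball of radius $R$ on which the auxiliary sharpness is valid. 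Applying that sharpness at $w$ then yields
\begin{equation*}
    \alpha_\eta\,\mathrm{dist}\pran{w,\mathrm{Fix}(\widetilde T)\cap\{z:x_{N_2}=0\}}\leq\|\widetilde T(w)-w\| \ .
\end{equation*}

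It then remains to match both sides back to the original problem. For the right-hand side, since $w=\widetilde T(\hat z^k)$, non-expansiveness of $\widetilde T$ gives $\|\widetilde T(w)-w\|=\|\widetilde T(w)-\widetilde T(\hat z^k)\|\leq\|w-\hat z^k\|=\|T(z^k)-z^k-v\|$, using the identity from the first step. For the left-hand side, I would let $\tilde z^*$ be the projection of $w$ onto $\mathrm{Fix}(\widetilde T)\cap\{z:x_{N_2}=0\}$; by Lemma \ref{lem:auxiliary-lp}(iii) the point $\tilde z^*+(\lambda_k+1)v$ lies in $\mathcal Z_v^*$, and since $T(z^k)=w+(\lambda_k+1)v$ one obtains
\begin{equation*}
    \mathrm{dist}(T(z^k),\mathcal Z_v^*)\leq\|T(z^k)-\tilde z^*-(\lambda_k+1)v\|=\|w-\tilde z^*\|=\mathrm{dist}\pran{w,\mathrm{Fix}(\widetilde T)\cap\{z:x_{N_2}=0\}} \ .
\end{equation*}
Chaining these three inequalities delivers $\alpha_\eta\,\mathrm{dist}(T(z^k),\mathcal Z_v^*)\leq\|T(z^k)-z^k-v\|$, as claimed.

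The main obstacle is conceptual rather than computational: one must choose the shift so that the shifted point $w$ simultaneously lands in $\{z:x_{N_2}=0\}$, stays inside the boundedness ball where the auxiliary sharpness holds, and has its $\widetilde T$-residual controlled by $\|T(z^k)-z^k-v\|$. The delicate bookkeeping is tracking the scalars $\lambda_k$ and the direction $v$ through both $T$ and $\widetilde T$ via Lemma \ref{lem:auxiliary-lp}, which is exactly what makes the three reductions align; the unbounded growth of $z^k$ on the infeasible instance is absorbed entirely into the $(\lambda_k+1)v$ shift, leaving a bounded residual to which the finite sharpness constant $\alpha_\eta$ applies.
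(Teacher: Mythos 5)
Your proposal is correct and follows essentially the same route as the paper: both transfer the problem to the auxiliary operator $\widetilde T$ via the identity $\widetilde T(\hat z^k)=T(z^k)-(\lambda_k+1)v$, apply the auxiliary sharpness at the point $w=\widetilde T(\hat z^k)$ (after checking $w_{N_2}=0$ and boundedness), use non-expansiveness of $\widetilde T$ to bound $\|\widetilde T(w)-w\|$ by $\|\widetilde T(\hat z^k)-\hat z^k\|=\|T(z^k)-z^k-v\|$, and shift the projection point by $(\lambda_k+1)v$ via Lemma \ref{lem:auxiliary-lp}(iii) to relate the two distances. Your write-up is in fact slightly more explicit than the paper's in verifying why $w$ is an admissible point for the auxiliary sharpness condition.
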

\begin{proof}
By definition of $\alpha_\eta$, we know that for any $z=(x,y)$ satisfying $\|z\|_2\leq 2(\|\hat z^0-\tilde z^*_0\|_2+\|\tilde z^*_0\|_2)$ and $x_{N_2}=0$, it holds that
    \begin{equation*}
        \alpha_\eta\mathrm{dist}(z,\mathrm{Fix}(\widetilde T)\cap\{z:x_{N_2}=0\})\leq \|\widetilde T(z)-z\| \ .
    \end{equation*}
It follows from part (iii) of Lemma \ref{lem:auxiliary-lp} that $\{\tilde z^*+\lambda v:\tilde z^*\in \mathrm{Fix}(\widetilde T)\cap\{z:x_{N_2}=0\},\lambda \geq 0\} \subseteq \mathcal Z_v^*$ and thus for $\tilde z^*=\argmin_{u\in \mathrm{Fix}(\widetilde T)\cap\{z:x_{N_2}=0\}}\|u-\widetilde T(\hat z^k)\|$ we have
    \begin{equation*}
        \begin{aligned}
            \mathrm{dist}(T(z^k),\mathcal Z_v^*) & \leq \mathrm{dist}(T(z^k),\{\tilde z^*+\lambda v:\tilde z^*\in \mathrm{Fix}(\widetilde T)\cap\{z:x_{N_2}=0\},\lambda \geq 0\})\\
            & \leq \|T(z^k)-(\lambda_k+1)v-\tilde z^*\|=\|\widetilde T(\hat z^k)-\tilde z^*\|=\mathrm{dist}(\widetilde T(\hat z^k), \mathrm{Fix}(\widetilde T)\cap\{z:x_{N_2}=0\})\\
            & \leq \frac{1}{\alpha_\eta} \|\widetilde T(\hat z^k)-\widetilde T(\widetilde T(\hat z^k))\|\leq \frac{1}{\alpha_\eta} \|\hat z^k-\widetilde T(\hat z^k)\|\\
            & = \frac{1}{\alpha_\eta} \|z^k-\lambda_k v-\widetilde T(z^k-\lambda_k v)\|=\frac{1}{\alpha_\eta} \|T(z^k)-z^k-v\| \ ,
        \end{aligned}
    \end{equation*}
    where the third inequality uses $(\widetilde T(\hat z^k)_x)_{N_2}=0$ and $\|\widetilde T(\hat z^k)\|\leq \|\hat z^0-\tilde z^*_0\|+\|\tilde z^*_0\|$ from Lemma \ref{lem:infeas-nonexpansive}. The first and last equality is due to part (i) of Lemma \ref{lem:auxiliary-lp}.
\end{proof}

Now, we are ready to prove Theorem \ref{thm:infeas}:
\begin{proof}[Proof of Theorem \ref{thm:infeas}]
Following the notations in Lemma \ref{lem:infeas-nonexpansive}, we denote $\hat z^{n,k}=z^{n,k}-\lambda_{n,k}v$ with $\lambda_{n,k}$ defined as
\begin{equation*}
    \lambda_{n,k+1}=\frac{k+1}{k+2}(\lambda_{n,k}+1)+\frac{1}{k+2}\lambda_{n,0}, \ \lambda_{n,0}=\lambda_{n-1,k^*}+1, \ \lambda_{0,0}=0 \ .
\end{equation*}
By utilizing Lemma \ref{lem:infeas-nonexpansive} and suppose $\|\hat z^{n,0}-\tilde z_0^*\|\leq \|z^{0,0}-\tilde z_0^*\|$ where $\tilde z_0^* \in \mathrm{Fix}(\widetilde T)\cap\{z:x_{N_2}=0\}$, it turns out that
\begin{equation*}
\begin{aligned}
    \|\hat z^{n+1,0}-\tilde z_0^*\|& =\|T(z^{n,k^*})-(\lambda_{n,k^*}+1)v-\tilde z_0^*\|\leq \|z^{n,k^*}-\lambda_{n,k^*}v-\tilde z_0^*\|=\|\hat z^{n,k^*}-\tilde z_0^*\|\\
    &\leq \|\hat z^{n,0}-\tilde z_0^*\| \leq \|z^{0,0}-\tilde z_0^*\|
\end{aligned}
\end{equation*}
where the first equality uses $z^{n+1,0}=T(z^{n,k})$ and the first inequality utilizes non-expansiveness of PDHG operator $T$. The second inequality follows from Lemma \ref{lem:infeas-nonexpansive} and the last one follows from induction assumption. Hence $\|\hat z^{n,0}-\tilde z_0^*\|\leq \|z^{0,0}-\tilde z_0^*\|$ for any $n$ by induction, and it holds that
\begin{equation*}
    \begin{aligned}
        \mathrm{dist}(z^{n+1,0},\mathcal Z_v^*)& \leq \|z^{n+1,0}-(\lambda_{n,k^*}+1)v-\tilde z_0^*\|=\|\hat z^{n+1,0}-\tilde z_0^*\|\leq\|z^{0,0}-\tilde z_0^*\|
    \end{aligned}
\end{equation*}
where the first inequality is from part (iii) of Lemma \ref{lem:auxiliary-lp}.

Thus we know that with the choice of restart frequency $k\geq \frac{12R}{C\delta_v}\geq \frac{6\sqrt{2\eta}\|z^{0,0}-\tilde z_0^*\|}{C\delta_v}$, it always holds that $k\geq \frac{6\sqrt{2\eta}\|z^{0,0}-\tilde z_0^*\|}{C\delta_v}\geq \frac{6\sqrt{2\eta}\mathrm{dist}(z^{n,0},\mathcal Z_v^*)}{C\delta_v}$ for any $n\geq 0$. This implies $z^{n,k^*}$ satisfies $(T(z^{n,k^*})_x)_B>0,\ (T(z^{n,k^*})_x)_{N_2}=0$ for any $n\geq 0$ and $k^*\geq \frac{6\sqrt{2\eta}\|z^{0,0}-\tilde z_0^*\|}{C\delta_v}$.

Furthermore, it is straightforward to check that $\frac{\sqrt{\log k^*+5}+1}{k^*+1}\frac{1}{\alpha_\eta}\leq \frac{1}{e}$ from $k^*\geq \frac{(\sqrt 2+1)e}{\alpha_\eta}\log\pran{\frac{(\sqrt 2+1)e}{\alpha_\eta}}$. Now we are ready to prove the linear convergence of $\mathrm{dist}(z^{n,0},\mathcal Z_v^*)$ by combining Lemma \ref{lem:infeas-sharp} and Lemma \ref{lem:infeas-iden} -- it holds that
\begin{equation}\label{eq:infeas-dist-linear}
    \begin{aligned}
        \mathrm{dist}(z^{n,0},\mathcal Z_v^*)&=\mathrm{dist}(T(z^{n-1,k^*}),\mathcal Z_v^*)\leq \frac{1}{\alpha_\eta} \|T(z^{n-1,k^*})-z^{n-1,k^*}-v\|\\
        & \leq \frac{1}{\alpha_\eta}\frac{\sqrt{\log k^*+5}+1}{k^*+1}\mathrm{dist}(z^{n-1,0},\mathcal Z_v^*)\\
        & \leq e^{-1}\mathrm{dist}(z^{n-1,0},\mathcal Z_v^*)
        \leq \ldots \leq e^{-n}\mathrm{dist}(z^{0,0},\mathcal Z_v^*) \ ,
    \end{aligned}
\end{equation}
where the first and second inequalities are due to Lemma \ref{lem:infeas-sharp} and Lemma \ref{lem:sublinear-infeas}, respectively. Thus by noticing $\|T(z^{n,0})-z^{n,0}-v\|\leq \|T(z^{n,0})-z^{n,0}-T(z^*)+z^*\|\leq 2\|z^{n,0}-z^*\|$ for any $z^*$ such that $T(z^*)-z^*=v$, we have
\begin{equation*}
    \|T(z^{n,0})-z^{n,0}-v\|\leq 2\mathrm{dist}(z^{n,0},\mathcal Z_v^*) \leq 2e^{-n}\mathrm{dist}(z^{0,0},\mathcal Z_v^*) \ ,
\end{equation*}
which concludes the proof.
\end{proof}

\section{{Extension: reflected restarted Halpern PDHG ($\text{r}^2$HPDHG)}}

In previous sections, we propose and analyze restarted Halpern PDHG (rHPDHG) for solving LP, which is essentially Halpern iteration on PDHG operator with restart. In this section, we discuss an enhancement over rHPDHG that enables potentially faster convergence. The extension is straightforward: instead of using Halpern iteration upon vanilla PDHG operator $T$, we use Halpern iteration on its reflection $2T-I$~\cite{ryu2016primer,lieder2021convergence}. The intuition is that PDHG operator $T$ is a firmly non-expansive operator. On the other hand, the sublinear convergence guarantee of Halpern iteration just requires non-expansive operators, which is a weaker condition than firm non-expansiveness. Indeed, it is well known that for a firmly non-expansive operator $T$, its reflection $2T-I$ is a non-expansive operator \cite{bauschke2019convex}, thus we can apply Halpern on reflected PDHG. Effectively, the use of reflection takes a longer step and thus can improve the complexity result by a constant factor, which we discuss below. This extension with reflection is inspired by a recent work \cite{chen2024hpr}, where a similar technique is adopted for a Halpern Peacemen-Rachford method on solving LP with an improved numerical performance.

More formally, the iterate update of reflected Halpern PDHG is defined as
\begin{equation}\label{eq:hrpdhg}
    z^{k+1}=\text{reflected-H-PDHG}(z^k;z^0):=\frac{k+1}{k+2}(2T(z^k)-z^k)+\frac{1}{k+2}z^0 \ ,
\end{equation}
where $T$ is the PDHG operator defined in \eqref{eq:pdhg}. Algorithm \ref{alg:hpdhg-restart-reflected} formally presents the reflected restarted Halpern PDHG ($\text{r}^2$HPDHG)  algorithm. It is very similar to rHPDHG algorithm stated in Algorithm \ref{alg:hpdhg-restart}, yet with an additional reflection step~\eqref{eq:hrpdhg}. Similar to Algorithm \ref{alg:hpdhg-restart}, we can use either fixed frequency restart or adaptive restart \eqref{eq:adaptive-restart}. 

\begin{algorithm}%[H]
\caption{Reflected restarted Halpern PDHG ($\mathrm{r^2HPDHG}$) for \eqref{eq:minmax}}
\label{alg:hpdhg-restart-reflected}
\SetKwInOut{Input}{Input}
\Input{Initial point $z^{0,0}$, outer loop counter $n\leftarrow 0$.}

\Repeat{\upshape $z^{n+1,0}$ convergence}{
  initialize the inner loop counter $k\leftarrow0$;\\
  \Repeat{\upshape restart condition holds}{
    $z^{n,k+1}\leftarrow \text{reflected-H-PDHG}(z^{n,k};z^{n,0})$;
  }
  initialize the initial solution $z^{n+1,0}\leftarrow T(z^{n,k})$;\\ %$z^{n+1,0}\leftarrow z^{n,k}$;\\
  $n\leftarrow n+1$;
}
\end{algorithm}

We next explain how the reflection can lead to a factor of $2$ improvement in the theoretical complexity compared to rHPDHG. We start with the following proposition, which essentially says the reflected PDHG is a non-expansive operator:

\begin{prop} \cite[Proposition 4.4]{bauschke2019convex}
    Let $T$ be PDHG operator for solving \eqref{eq:minmax}. Then its reflected operator $2T-I$ is non-expansive .
\end{prop}

We next illustrate how to obtain a factor-of-$2$ theoretical improvement of $\mathrm{r^2HPDHG}$ over rHPDHG.

First, notice that the sub-linear convergence of Halpern reflected PDHG follows directly from the non-expansiveness of the reflected operator.
\begin{lem}[{\cite[Theorem 2.1 and its proof (page 4)]{lieder2021convergence}}]\label{lem:sublinear-feas-reflected}
    Consider $\{z^{k}\}$ the iterates of Halpern reflected PDHG on solving feasible \eqref{eq:lp}. Denote $z^*\in \mathcal Z^*:=\{z^*\mid \PDHG(z^*)-z^*=0\}$. Then it holds for any $k\geq 1$ that
    \begin{equation*}
        \left\|(2\PDHG-I)(z^k)-z^k\right\|\leq \frac{2}{k+1}\mathrm{dist}(z^0,\mathcal Z^*), \ \mathrm{and} \ \left\|(2\PDHG-I)(z^k)-z^k\right\|\leq \frac{2}{k}\left\|z^k-z^0\right\|
    \end{equation*}
\end{lem}
An immediate consequence of Lemma \ref{lem:sublinear-feas-reflected} is that reflection improves the convergence rate of the fixed-point residual by a factor of two, compared to the regular Halpern PDHG stated in Lemma \ref{lem:sublinear-feas}:
\begin{cor}\label{cor:sublinear-feas-reflected}
Under the same condition as Lemma \ref{lem:sublinear-feas-reflected}, it holds for any $k\geq 0$ that
    \begin{equation*}
        \left\|\PDHG(z^k)-z^k\right\|\leq \frac{1}{k+1}\mathrm{dist}(z^0,\mathcal Z^*), \ \mathrm{and} \ \left\|\PDHG(z^k)-z^k\right\|\leq \frac{1}{k}\left\|z^k-z^0\right\| \ .
    \end{equation*}
\end{cor}
Compared with the Halpern PDHG with reflection (Lemma \ref{lem:sublinear-feas}), Corollary \ref{cor:sublinear-feas-reflected} shows that reflection improves the sublinear convergence rate by a factor of $2$. This factor of $2$ can be passed over to all of the theoretical guarantees stated in the previous section. For example, consider the situation of fixed frequency restart on feasible LP (i.e., Theorem \ref{thm:fixed-restart}) allow us to set the length of the inner loop as 
\begin{equation}\label{eq:fixed-restart-reflect}
    k^*= \left\lceil\frac{e}{\alpha_\eta}\right\rceil \ ,
\end{equation}
which is a factor of $2$ smaller than that in Theorem \ref{thm:fixed-restart}. With an identical proof of Theorem \ref{thm:fixed-restart} and utilizing Corollary \ref{cor:sublinear-feas-reflected} instead of Lemma \ref{lem:sublinear-feas}, we can obtain:
\begin{thm}[Fixed frequency restart with reflection for feasible LP]\label{thm:fixed-restart-reflect}
For a feasible and bounded LP~\eqref{eq:lp}, consider $\{z^{n,k}\}$ the iterates of reflected restarted Halpern PDHG (Algorithm \ref{alg:hpdhg-restart-reflected}) with fixed frequency restart scheme, namely, we restart the outer loop if \eqref{eq:fixed-restart-reflect} holds. Denote $\mathcal Z^*$ is the set of optimal solutions. Then it holds for any $n\geq 0$ that
    \begin{equation*}
        \mathrm{dist}(z^{n+1,0},\mathcal Z^*)\leq \pran{\frac 1e}^{n+1} \mathrm{dist}(z^{0,0},\mathcal Z^*) \ .
    \end{equation*}
\end{thm}

With the same logic, all other theoretical results stated in previous sections (i.e., Theorem \ref{thm:adaptive-restart}, Theorem \ref{thm:stage-1}, Theorem \ref{thm:stage-2} and Theorem \ref{thm:infeas}) can be derived for $\mathrm{r^2HPDHG}$ in parallel to the analysis for rHPDHG with a factor of $2$ improvement.

\section{{Numerical experiments}}
In this section, we present HPDLP, a GPU-based FOM LP solver based on rHPDHG/$\mathrm{r^2HPDHG}$. We compare rHPDHG and $\mathrm{r^2HPDHG}$ with cuPDLP, a GPU-implemented LP solver based on raPDHG, which was shown to have comparable performance to Gurobi for solving LP on standard benchmark sets~\cite{lu2023cupdlp}. The major message of this section is that rHPDHG can achieve comparable to cuPDLP while there is speedup of $\mathrm{r^2HPDHG}$ over cuPDLP, demonstrating their potential to be alternative base routines of FOM-based LP solver.

{\bf Benchmark dataset.} We use \texttt{MIP Relaxations} as our benchmark set, which contain 383 instances curated from root-node LP relaxation of mixed-integer programming problems from MIPLIB 2017 collection~\cite{gleixner2021miplib}. 383 instances are selected from MIPLIB 2017 to construct \texttt{MIP Relaxations} based on the same criteria as~\cite{applegate2021practical, lu2023cupdlp}. The dataset is further split into three classes based on the number of nonzeros (nnz) in the constraint matrix, as shown in Table \ref{tab:miplib-size}.
\begin{table}[ht!]
\centering
\begin{tabular}{cccc}
\hline
                    & \textbf{Small}                   & \textbf{Medium}                     & \textbf{Large}                   \\ \hline
\textbf{Number of nonzeros}  & 100K -  1M & 1M - 10M & \textgreater 10M \\
\textbf{Number of instances} & 269                     & 94                         & 20                      \\ \hline
\end{tabular}
\caption{Scales of instances in \texttt{MIP Relaxations}.}
\label{tab:miplib-size}
\end{table}

{\bf Software and computing environment.} HPDLP and cuPDLP.jl are both implemented in an open-source Julia~\cite{bezanson2017julia} module and utilize \href{https://github.com/JuliaGPU/CUDA.jl}{CUDA.jl}~\cite{besard2018effective} as the interface for working with NVIDIA CUDA GPUs using Julia. We use NVIDIA H100-PCIe-80GB GPU, with CUDA 12.3, for running experiments. The experiments are performed in Julia 1.9.2.

{\bf Termination criteria.} The same termination criteria as cuPDLP.jl is utilized~\cite{lu2023cupdlp}, namely, the solver will terminate when the relative KKT error is no greater than the termination tolerance $\epsilon\in(0,\infty)$.

{\bf Time limit.} We impose a time limit of 3600 seconds on instances with small-sized and medium-sized instances and a time limit of 18000 seconds for large instances.

{\bf Shifted geometric mean.} We report the shifted geometric mean of solve time to measure the performance of solvers on a certain collection of problems. More precisely, shifted geometric mean is defined as $\left(\prod_{i=1}^n (t_i+\Delta)\right)^{1/n}-\Delta$ where $t_i$ is the solve time for the $i$-th instance. We shift by $\Delta=10$ and denote it SGM10. If the instance is unsolved, the solve time is always set to the corresponding time limit. 

{\bf Algorithmic enhancements.} To improve the practical performance, cuPDLP exploits several algorithmic enhancements upon the base algorithm raPDHG, including preprocessing, adaptive step-size, primal weight, and adaptive restart based on KKT error. More details refer to~\cite{lu2023cupdlp}. rHPDHG/$\mathrm{r^2HPDHG}$ adopts the same heuristics as cuPDLP except the adaptive restart and leverage the fixed point residual as its restart metric (see \eqref{eq:adaptive-restart}).

{\bf Results.} Table \ref{tab:miplib-1e-4} and \ref{tab:miplib-1e-8} present the comparison between rHPDHG, $\mathrm{r^2HPDHG}$ and cuPDLP.jl. Table \ref{tab:miplib-1e-4} presents moderate accuracy (i.e., $10^{-4}$ relative KKT error) while Table \ref{tab:miplib-1e-8} presents high accuracy (i.e., $10^{-8}$ relative KKT error).
\begin{table}[h!]
\centering
{
\begin{tabular}{ccccccccc}
\hline
\multirow{2}{*}{}                                   & \multicolumn{2}{c}{\begin{tabular}[c]{@{}c@{}}\textbf{Small (269)} \\ (1-hour limit)\end{tabular}} & \multicolumn{2}{c}{\begin{tabular}[c]{@{}c@{}}\textbf{Medium (94)}\\ (1-hour limit)\end{tabular}}    & \multicolumn{2}{c}{\begin{tabular}[c]{@{}c@{}}\textbf{\textbf{Large (20)}}\\ (5-hour limit)\end{tabular}}  & \multicolumn{2}{c}{\textbf{Total (383)}}       \\
                                                    & \textbf{Count} & \textbf{Time} & \textbf{Count} & \textbf{Time} & \textbf{Count} & \textbf{Time} & \textbf{Count} & \textbf{Time} \\ \hline
\multicolumn{1}{c}{\textbf{rHPDHG}} & 267                   & 8.33                & 92                   & 12.70              & 17                    & 108.42   &376 &11.30           \\   
\multicolumn{1}{c}{{$\mathrm{\bf r^2HPDHG}$}}     & 267                   & 6.61               & 93                    & 7.84              & 19                    & 90.81 &379 &8.58         \\
\multicolumn{1}{c}{\textbf{cuPDLP.jl}}     & 266                   & 8.46               & 92                    & 13.76               & 19                    & 86.39 &377 &11.41            \\\hline
\end{tabular}
}
\caption{Solve time in seconds and SGM10 of different solvers on instances of \texttt{MIP Relaxations} with tolerance $10^{-4}$: rHPDHG/$\mathrm{r^2HPDHG}$ versus cuPDLP.jl.}
\label{tab:miplib-1e-4}
\end{table}

\begin{table}[h!]
\centering
{
\begin{tabular}{ccccccccc}
\hline
\multirow{2}{*}{}                                   & \multicolumn{2}{c}{\begin{tabular}[c]{@{}c@{}}\textbf{Small (269)} \\ (1-hour limit)\end{tabular}} & \multicolumn{2}{c}{\begin{tabular}[c]{@{}c@{}}\textbf{Medium (94)}\\ (1-hour limit)\end{tabular}}    & \multicolumn{2}{c}{\begin{tabular}[c]{@{}c@{}}\textbf{\textbf{Large (20)}}\\ (5-hour limit)\end{tabular}}  & \multicolumn{2}{c}{\textbf{Total (383)}}       \\
                                                    & \textbf{Count} & \textbf{Time} & \textbf{Count} & \textbf{Time} & \textbf{Count} & \textbf{Time} & \textbf{Count} & \textbf{Time} \\ \hline
\multicolumn{1}{c}{\textbf{rHPDHG}} & 260                   & 26.64               & 88                   & 38.89              & 16                    & 295.92   &364 &33.83           \\     
\multicolumn{1}{c}{{$\mathrm{\bf r^2HPDHG}$}}     & 260                   & 19.13               & 87                    & 28.35              & 16                    & 229.47 &363 &24.79       \\
\multicolumn{1}{c}{\textbf{cuPDLP.jl}}     & 261                  & 23.26               & 86                    & 39.20               & 16                    & 350.84 &363 &31.47            \\\hline
\end{tabular}
}
\caption{Solve time in seconds and SGM10 of different solvers on instances of \texttt{MIP Relaxations} with tolerance $10^{-8}$: rHPDHG/$\mathrm{r^2HPDHG}$ versus cuPDLP.jl.}
\label{tab:miplib-1e-8}
\end{table}

A notable observation is that rHPDHG achieves a performance comparable to that of cuPDLP.jl while $\mathrm{r^2HPDHG}$ even gains remarkable speedup over cuPDLP.jl, regardless of the problem size and solution accuracy. In particular, rHPDHG solves 376 out of 383 instances with average 11.3 seconds under moderate accuracy, while cuPDLP.jl solves 1 more instance with approximately the same solve time. Furthermore, $\mathrm{r^2HPDHG}$ solves 2 more instances than cuPDLP.jl with 1.33x speedup. In terms of high accuracy, rHPDLP solves 1 more instance than cuPDLP.jl while $\mathrm{r^2HPDHG}$ achieves 1.27x speedup over cuPDLP.jl. To summarize, rHPDHG has similar overall performances as cuPDLP.jl while $\mathrm{r^2HPDHG}$ exhibits a stronger overall performance.

\section{Conclusions}
In this paper, we present rHPDHG/$\mathrm{r^2HPDHG}$, a matrix-free primal-dual algorithm for solving LP. We show that the algorithm achieves accelerated rates to identify the active variables and eventual linear convergence on feasible LP, while it can recover infeasibility certificates with an accelerated linear rate. We further build up an LP solver based on rHPDHG/$\mathrm{r^2HPDHG}$ and GPUs, whose numerical experiments showcase the strong empirical performance for LP.

\bibliographystyle{amsplain}
\bibliography{ref-papers}

\end{document}